\documentclass[pdflatex,sn-mathphys-num]{sn-jnl}% Math and Physical Sciences Numbered Reference Style
\usepackage[utf8]{inputenc}
\usepackage{xcolor}
\usepackage{amsmath}
\usepackage{amssymb}
\usepackage{graphicx}
\usepackage{longtable}
\usepackage{mathtools}
\mathtoolsset{showonlyrefs}
\usepackage{hyperref}
\hypersetup{colorlinks,linkcolor={blue}, citecolor={blue},}
\usepackage{theoremref}
\usepackage{mleftright}
\usepackage{extpfeil}
\usepackage{comment}

\usepackage{tikz, tikz-cd}
\usetikzlibrary{decorations.pathreplacing}
\usepackage{float}
\usepackage{caption}
\usepackage{ytableau}
\usetikzlibrary{patterns, snakes}
\usepackage{float}
\usepackage{subfig}
\usepackage{pgfplots}
\usepackage{enumitem}

\setlist[enumerate]{font=\small, before=\small}
\setlist[itemize]{font=\small, before=\small}

\theoremstyle{thmstyleone}%
\newtheorem{theorem}{Theorem}%  meant for continuous numbers
\newtheorem{corollary}{Corollary}

\theoremstyle{thmstyletwo}%
\newtheorem{example}{Example}%
\newtheorem{remark}{Remark}%

\theoremstyle{thmstylethree}%
\newtheorem{definition}{Definition}%

\begin{document}

\title[Algebraic Structures on Sets of Partitions]{Algebraic Structures on Sets of Partitions}

%%=============================================================%%
%% GivenName	-> \fnm{Joergen W.}
%% Particle	-> \spfx{van der} -> surname prefix
%% FamilyName	-> \sur{Ploeg}
%% Suffix	-> \sfx{IV}
%% \author*[1,2]{\fnm{Joergen W.} \spfx{van der} \sur{Ploeg} 
%%  \sfx{IV}}\email{iauthor@gmail.com}
%%=============================================================%%

\author*[1]{\fnm{Madeline L.} \sur{Dawsey}}\email{mdawsey@uttyler.edu}

\author[1]{\fnm{Megan} \sur{du Preez}}%\email{mdupreez2@patriots.uttyler.edu}
%\equalcont{These authors contributed equally to this work.}

\author[1]{\fnm{Rachel} \sur{Van Surksum}}%\email{rvansurksum@patriots.uttyler.edu}
%\equalcont{These authors contributed equally to this work.}

\affil[1]{\orgdiv{Department of Mathematics}, \orgname{University of Texas at Tyler}, \orgaddress{\street{3900 University Boulevard}, \city{Tyler}, \postcode{75799}, \state{TX}, \country{USA}}}

%%==================================%%
%% Sample for unstructured abstract %%
%%==================================%%

\abstract{Motivated by Robert Schneider's trailblazing work toward developing a unifying algebraic theory of integer partitions, we explore various binary operations on partitions to identify algebraic structures on sets of partitions. In particular, we discover that several sets of restricted partitions form abelian groups under reduced versions of concatenation, component-wise addition, and component-wise multiplication. One type of restricted partition from a group structure also enjoys a bijection with ordinary partitions of any given size. We extend two partition groups to vector spaces over the finite field $\mathbb{Z}_p$, where $p$ is a prime.  We further discover that partitions are equipped with a commutative ring structure. Finally, we consider subgroups, subspaces, and ideals of our algebraic partition structures to investigate properties of related types of restricted partitions.  The new examples of algebraic structures described in this paper open the door to partition analysis via algebraic tools, decompositions, extensions, and geometry.}

\keywords{Restricted Partitions, Algebraic Structures, Partition Bijections, Partition Operations}

%%\pacs[JEL Classification]{D8, H51}

\pacs[MSC Classification]{11P81, 05A17}

\maketitle

\section*{Acknowledgements}

The authors acknowledge support from NSF Grant DMS-2149921 and the UT Tyler REU. We are grateful for the encouragement and helpful suggestions of Taylor Daniels, Robert Schneider, and Frank Sottile.

\section{Introduction and Statement of Results}%\label{sec1}

A \textit{partition} of an integer $n\geq 0$ is a weakly decreasing sequence of positive integers which sum to $n$. If $\lambda=\left(\lambda_1,\lambda_2,\dots,\lambda_\ell\right)$ is a partition of $n$, then the \textit{size} of $\lambda$ is $|\lambda|:=n$ and the \textit{length} of $\lambda$ is $\ell(\lambda):=\ell$. We also set $\lambda_j:=0$ for all $j>\ell(\lambda)$. We refer to the integers $\lambda_1,\lambda_2,\dots,\lambda_\ell$ in the partition $\lambda=\left(\lambda_1,\lambda_2,\dots,\lambda_\ell\right)$ as the \textit{parts} of $\lambda$, and the \textit{multiplicity} $m_\lambda\left(\lambda_k\right)$ of a part $\lambda_k$ in $\lambda$ is the number of times $\lambda_k$ appears as a part in $\lambda$. Note that the only partition of $n=0$ is $()=\emptyset$, the empty partition, which has no parts, so both its size and its length are zero. We denote the set of all partitions by $\mathcal{P}$. 

To more easily understand the structure of an individual partition, we can use a \textit{Young diagram}, as shown in Figure \ref{partition_example}.  A Young diagram of a partition $\lambda=\left(\lambda_1,\lambda_2,\dots,\lambda_\ell\right)$ is a diagram with $\ell$ left-justified rows of boxes, where the $i$th row from the top contains $\lambda_i$ boxes.

\begin{figure}[h]
    \centering
    \begin{tikzpicture}[scale=0.6,step=1cm]
  \draw (0,0) grid (4,-1);
  \draw (0,-1) grid (3,-3);
  \draw (0,-3) grid (1,-5);
\end{tikzpicture}
    \caption{Young diagram of the partition $\lambda=(4, 3, 3, 1, 1)$}
    \label{partition_example}
\end{figure}

Classically, partitions have been studied either using elementary combinatorial techniques applied to Young diagrams or partition functions, which count the number of partitions of a fixed type and of any given size, or via their generating functions.  Partition generating functions are generally viewed as $q$-series which live somewhere in the universe of complex analytic functions with special types of symmetry, including modular forms, mock modular forms, harmonic Maass forms, and quantum modular forms.  These techniques provide us with some understanding of partitions from a combinatorial, analytic, or even geometric and topological standpoint.  There is a large body of work, always growing, that investigates partitions from these viewpoints (for more details, see \cite{A,HMF,HW}).

In 2018, Robert Schneider successfully completed and defended his Ph.D. thesis titled \emph{Eulerian series, zeta functions, and the arithmetic of partitions} \cite{RS}, in which, among many other innovative ideas, he proposed a new avenue of partition research from an algebraic perspective.  He considered a type of partition multiplication defined by concatenation and then reordering the parts to be weakly decreasing. It is clear that the set $\mathcal{P}$ of partitions under this multiplication operation does not form an algebraic group, since partitions do not have inverses under this operation.  Schneider addresses this issue in his notes\footnote{Schneider, R.: \textit{Notes toward an algebra of partitions}, unpublished lecture notes, Partitions Specialty Seminar, Michigan Technological University. Available at \url{https://pages.mtu.edu/~wjkeith/PartitionsSpecialtySeminar/Algebra_of_multipartitions.4.pdf}.} and instead identifies monoid and semiring structures on $\mathcal{P}$.  To complete the first construction of a group of partitions, he defined the set $\mathcal{P}^-$ of \textit{antipartitions}, which are the inverses of partitions under his product operation, by the relation $\lambda\lambda^-=\emptyset$.  The parts of the antipartition $\lambda^-=\left(\lambda_1^-,\lambda_2^-,\dots,\lambda_\ell^-\right)$ are called \textit{antiparts}, and corresponding parts of $\lambda$ and antiparts of $\lambda^-$ annihilate each other pairwise.  For example, the partition multiplication operation applied to the partition $(7,6,3,3,1)\in\mathcal{P}$ and the antipartition $\left(6^-,4^-,3^-,1^-,1^-\right)\in\mathcal{P}^-$ yields the mixed partition $$(7,6,3,3,1)(6^-,4^-,3^-,1^-,1^-)=(7,6,6^-,4^-,3,3,3^-,1,1^-,1^-)=(7,4^-,3,1^-).$$  If we write the antipartition and antiparts instead as the denominator of a quotient of partitions, then we have the equivalent notation $$(7,4^-,3,1^-)=(7,3)(4,1)^-=(7,3)/(4,1).$$  Using this quotient notation for the antiparts, it is natural to refer to the set $\mathcal{Q}:=\mathcal{P}\cup\mathcal{P}^-$ as the set of \textit{rational partitions}.  Schneider's main theorem in \cite[Appendix B]{RS} is the following.

\begin{theorem}[{\cite[Theorem B.3.2]{RS}}]\label{Schneider1}
The set $\mathcal{Q}$ of rational partitions forms an abelian group under partition multiplication.
\end{theorem}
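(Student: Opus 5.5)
The plan is to realize $\mathcal{Q}$ concretely as a free abelian group. Here $\mathcal{Q}$ is understood as the set of all mixed partitions: finite multisets of parts and antiparts in which no part $k$ appears alongside an antipart $k^-$, since such pairs annihilate. The key device is the multiplicity vector. For a partition $\lambda\in\mathcal{P}$ we record the function $k\mapsto m_\lambda(k)$, a finitely supported map $\mathbb{Z}_{>0}\to\mathbb{Z}_{\geq 0}$. For a mixed partition we record $k\mapsto m(k)-m(k^-)$, where $m(k)$ counts parts equal to $k$ and $m(k^-)$ counts antiparts equal to $k^-$. This defines a map
$$\Phi:\mathcal{Q}\to\bigoplus_{k\geq 1}\mathbb{Z},$$
and the whole argument reduces to showing that $\Phi$ is a bijection converting partition multiplication into componentwise addition. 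The group axioms then transfer from $\bigoplus_{k\ge1}\mathbb{Z}$.

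\textbf{Step 1: $\Phi$ is well defined and bijective.} After cancellation, each reduced mixed partition has, for each $k$, either only copies of $k$ or only copies of $k^-$. Hence it is determined by the signed integer $m(k)-m(k^-)$. Conversely, any finitely supported integer vector $(c_k)$ gives a unique reduced mixed partition: take $c_k$ copies of $k$ if $c_k>0$ and $|c_k|$ copies of $k^-$ if $c_k<0$, then sort, say with antipart $k^-$ placed next to $k$. Its factorization is $\lambda/\mu$ with $\lambda,\mu\in\mathcal{P}$ having disjoint sets of part sizes.

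\textbf{Step 2: multiplication corresponds to addition.} Concatenating two mixed partitions adds the raw counts $m(k)$ and $m(k^-)$ separately. Reordering does not change multiplicities. Each annihilation of a pair $k,k^-$ lowers both counts by one, so the difference $m(k)-m(k^-)$ is unchanged. Therefore $\Phi(\alpha\beta)=\Phi(\alpha)+\Phi(\beta)$.

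\textbf{Step 3: the axioms.} Closure holds because the sum of finitely supported vectors is finitely supported. Associativity and commutativity follow from those of addition in $\bigoplus\mathbb{Z}$ together with the injectivity of $\Phi$. The identity is $\emptyset$, with $\Phi(\emptyset)=0$. The inverse of $\lambda/\mu$ is $\mu/\lambda$, and in particular $\lambda^-$ is the inverse of $\lambda$, consistent with Schneider's defining relation $\lambda\lambda^-=\emptyset$.

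\textbf{Main obstacle.} The real difficulty is not algebraic but definitional. One must make precise that multiplication is well defined on reduced forms, that is, that the final result does not depend on the order in which annihilations are performed. This is exactly what Step 2 handles: the invariant $m(k)-m(k^-)$ shows that every order of cancellation yields the same reduced mixed partition. I would state this carefully before invoking $\Phi$. Once it is in place, $\mathcal{Q}\cong\bigoplus_{k\ge1}\mathbb{Z}$ is a free abelian group on the one-part partitions $(k)$.
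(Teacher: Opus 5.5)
Your proof is correct, and it takes a different route from the paper. The paper does not prove this statement itself; it cites Schneider. Its own version is Theorem~\ref{2_colors} on reduced two-color partitions, carried over to $\mathcal{Q}$ by the bijection sending white parts to parts and gray parts to antiparts. There, identity and inverses are checked directly. Associativity and commutativity are proved by an exhaustive case split on how many copies of a part $\alpha$ and of $\alpha^{-1}$ lie in $\lambda$, $\pi$, $\tau$ and in the intermediate products (Cases~1.1 through~3.3, with the details of Cases~2 and~3 omitted).

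Your signed multiplicity $m(k)-m(k^-)$ is exactly the quantity those cases keep track of. Making it the organizing invariant reduces the case analysis to a single observation. It shows at once that the fully reduced product does not depend on the order of annihilations, a well-definedness point the paper never states. It also turns associativity and commutativity into those of $\bigoplus_{k\ge1}\mathbb{Z}$.

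Your argument also proves more than the statement asks. It gives an explicit isomorphism $\mathcal{Q}\cong\bigoplus_{k\ge1}\mathbb{Z}$, free on the one-part partitions $(k)$. The same device read modulo $k$ identifies $\left(\mathcal{P}_{\cup_k},\cup_k\right)$ with $\bigoplus_{j\ge1}\mathbb{Z}/k\mathbb{Z}$. This is the structural reason behind Theorem~\ref{group_structures}, Part~1, whose associativity proof already counts multiplicities modulo $k$, and behind the basis of one-part partitions in Theorem~\ref{basis_thm}. The paper's approach gains only that it stays inside the language of parts and concatenation, at the cost of length and several cases left to the reader.

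Finally, you were right to read $\mathcal{Q}$ as the set of all reduced mixed partitions rather than the literal union $\mathcal{P}\cup\mathcal{P}^-$. The paper's own example $(7,4^-,3,1^-)$ lies in neither $\mathcal{P}$ nor $\mathcal{P}^-$, so the literal union is not closed under the operation.
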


The arithmetic structure of antipartitions under the partition product operation of Schneider allows us to interpret antipartitions as partitions of negative integers.  Theorem \ref{Schneider1} is the first group structure found on a set related to integer partitions.  Schneider also realized a similar group structure on overpartitions \cite{Overpartitions} and proved that the group of overpartitions is isomorphic to the multiplicative group $\mathbb{Q}$ of rational numbers.  However, the group $\mathcal{Q}$ under Schneider's partition multiplication operation (like the group of overpartitions) includes partitions which do not belong to $\mathcal{P}$.  Enlarging $\mathcal{P}$ to $\mathcal{Q}$ is equivalent to enlarging $\mathbb{N}$ to $\mathbb{Z}$ in each part of each partition, so the group structure comes at the price of no longer being a property of classical integer partitions.

Also in \cite{RS}, Schneider defines a type of partition tensor product $\otimes$ and proves the following theorem, where $\oplus$ now denotes the partition multiplication operation described above.

\begin{theorem}{{\cite[Theorem B.3.5]{RS}}}\label{Schneider_ring_thm}
The set $\mathcal{Q}$ of rational partitions is a commutative ring under the operations of addition $\oplus$ and multiplication $\otimes$.
\end{theorem}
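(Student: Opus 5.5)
The approach is to identify $\mathcal{Q}$ with a familiar ring: the monoid ring $\mathbb{Z}[\mathbb{N}_{>0},\times]$ of the multiplicative monoid of positive integers. The commutative ring axioms will then come for free, and the work reduces to checking that $\oplus$ and $\otimes$ go over to the ring operations.

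I am taking $\otimes$ to be Schneider's part-wise product: $\lambda\otimes\mu$ is the multiset of all pairwise products $\lambda_i\mu_j$ of parts. The sign rule is that part $\times$ part and antipart $\times$ antipart give a part, while part $\times$ antipart gives an antipart. The argument depends on this reading.

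\textbf{Step 1: multiplicity functions.} Encode a rational partition $\lambda\in\mathcal{Q}$ by its signed multiplicity function $m_\lambda:\mathbb{N}_{>0}\to\mathbb{Z}$. Here $m_\lambda(k)$ is the number of parts equal to $k$ minus the number of antiparts equal to $k^-$, after all annihilations. This function is finitely supported. Conversely, every finitely supported $m$ determines a unique reduced mixed partition. So $\lambda\mapsto m_\lambda$ is a bijection from $\mathcal{Q}$ onto the free abelian group $\bigoplus_{k\ge1}\mathbb{Z}e_k$. Under this bijection, concatenation followed by pairwise annihilation is exactly pointwise addition, $m_{\lambda\oplus\mu}=m_\lambda+m_\mu$. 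This is the content of Theorem~\ref{Schneider1}, now upgraded to an isomorphism of abelian groups with zero element $\emptyset$.

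\textbf{Step 2: the product as a convolution.} Next I would show that the sign rule for $\otimes$ translates into Dirichlet-type convolution:
\[
m_{\lambda\otimes\mu}(k)=\sum_{ab=k} m_\lambda(a)\,m_\mu(b),
\]
which is a finite sum. The main obstacle is \emph{well-definedness}: the product should not depend on whether annihilations in $\lambda$ or $\mu$ are carried out before or after multiplying.

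To settle this, I would check that the part-wise product is bilinear at the level of unreduced mixed multisets. A cancelling pair $a,a^-$ in $\lambda$ produces, against each part $b$ of $\mu$, the pair $ab,(ab)^-$, which cancels again. The same holds with the roles of $\lambda$ and $\mu$ swapped, and for antiparts of $\mu$ by the sign rule. Hence the product descends to reduced representatives, and the convolution formula follows by counting the pairs that produce each value $k$.

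\textbf{Step 3: ring axioms.} Via the basis $e_k\leftrightarrow(k)$, the product now reads $e_a\otimes e_b=e_{ab}$ extended $\mathbb{Z}$-bilinearly. This is precisely multiplication in the monoid ring $\mathbb{Z}[\mathbb{N}_{>0},\times]$. Associativity and commutativity follow from those of integer multiplication: $(ab)c=a(bc)$ and $ab=ba$. Distributivity over $\oplus$ is the bilinearity established in Step 2. The identity element is the one-part partition $(1)$, since $e_1\otimes e_b=e_b$.

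Therefore $(\mathcal{Q},\oplus,\otimes)$ is a commutative ring, indeed isomorphic to $\mathbb{Z}[x_2,x_3,x_5,\dots]$ via the identification $e_n\mapsto\prod_p x_p^{v_p(n)}$. This last isomorphism is not needed for the statement, but it is a useful sanity check.
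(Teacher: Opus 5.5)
The paper gives no proof of this statement to compare against. It is quoted from Schneider's thesis \cite{RS} as background, and the paper never records the definition of $\otimes$ beyond calling it a partition tensor product. Your caveat is therefore the right one: everything rests on reading $\otimes$ as the multiset of all pairwise products $\lambda_i\mu_j$, with a multiplicative sign rule for antiparts.

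Under that reading your argument is correct and complete. The crux is your Step 2. For reduced inputs, the number of parts of size $k$ minus the number of antiparts of size $k$ in the unreduced product is $\sum_{ab=k}m_\lambda(a)m_\mu(b)$, because signs multiply. Reduction preserves this signed count, and $m$ is a complete invariant of an element of $\mathcal{Q}$. So the intermediate reductions inside $(\lambda\otimes\mu)\otimes\nu$ or $\lambda\otimes(\mu\oplus\nu)$ cause no trouble. All the axioms, with zero $\emptyset$ and unit $(1)$, are then inherited from $\mathbb{Z}[\mathbb{N}_{>0},\times]$.

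Two remarks.

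First, your Step 1 silently adopts the correct reading of $\mathcal{Q}$. Taken literally, $\mathcal{P}\cup\mathcal{P}^-$ is not closed under $\oplus$: $(2)\oplus(1^-)=(2,1^-)$ is neither a partition nor an antipartition. However, the paper's own example $(7,4^-,3,1^-)=(7,3)/(4,1)$ shows that $\mathcal{Q}$ is meant to be all reduced mixed partitions. That is exactly the set your map $\lambda\mapsto m_\lambda$ sends bijectively onto $\bigoplus_{k\ge1}\mathbb{Z}e_k$.

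Second, compare this with how the paper handles the additive half elsewhere. The proof of Theorem~\ref{2_colors} does case-by-case multiplicity bookkeeping, and by the isomorphism proved right after it, that group is $(\mathcal{Q},\oplus)$. Your encoding reduces its associativity and commutativity to pointwise addition. It also yields more than the statement asks for: the identification with integer Dirichlet polynomials $\sum_k m(k)k^{-s}$, equivalently with $\mathbb{Z}[x_2,x_3,x_5,\dots]$, shows that $\mathcal{Q}$ is an integral domain (indeed a UFD). This contrasts with the zero divisors the paper exhibits in $(\mathcal{P}_k^+,+_k,\cdot_k)$.
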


The ring $(\mathcal{Q},\oplus,\otimes)$ is the first ring structure identified on a set related to $\mathcal{P}$.  Note that Schneider's partition multiplication was initially studied as a form of partition product, but in the context of the ring $\mathcal{Q}$, this same operation satisfies the ring axioms for the sum operation.

A few partition operations other than Schneider's have been defined and used to study partitions before. Andrews \cite[Definition 8.11]{A} used the symbol $\oplus$ to define the partition multiplication operation of Schneider twenty years earlier, and he defined this operation in terms of the multiplicities of the parts of the partitions multiplied.  He went on to use this operation to define the theory of partition ideals, which appears to be the very first hint of an algebraic structure on the set of integer partitions in the literature.  In \cite{CDHS}, the first author and her collaborators defined a different operation denoted $\oplus$, specifically on the set of all sequentially congruent partitions (defined by Schneider and Schneider in \cite{SS}).  This operation is defined by left-justified component-wise addition of the parts.  For example, if $\lambda=(5,3,2,2,1)$ and $\gamma=(4,3,1)$, then $\lambda\oplus\gamma=(5+4,3+3,2+1,2+0,1+0)=(9,6,3,2,1)$.  The operation of scalar multiplication was also defined in \cite{CDHS} for any nonnegative integer $c$ and any partition $\lambda=\left(\lambda_1,\lambda_2,\dots,\lambda_\ell\right)$ by $c\lambda=c\left(\lambda_1,\lambda_2,\dots,\lambda_\ell\right)=\left(c\lambda_1,c\lambda_2,\dots,c\lambda_\ell\right)$, so that $c\lambda=\underbrace{\lambda\oplus\lambda\oplus\cdots\oplus\lambda}_{c\text{ times}}$.

Following a presentation on the work in \cite{CDHS} for the online Specialty Seminar in Partition Theory, $q$-Series, and Related Topics hosted by the department of mathematics at Michigan Technological University in 2024, Taylor Daniels asked whether the addition and scalar multiplication operations could lead to a vector space structure on the set of sequentially congruent partitions, and what the implications of such a structure would be for partitions and their operations.  Together, Daniels's question and Schneider's algebraic study of partitions provided the inspiration for the work presented here.

The goal of this work is to identify algebraic structures such as groups, vector spaces, and rings within the set $\mathcal{P}$ of all partitions, without enlarging $\mathcal{P}$ to non-partition multisets of integers. Although it is easy to verify that most of the above defined operations yield monoid structures on $\mathcal{P}$, these monoids do not contain partition inverses under any of the operations. Thus, we modify these operations to obtain several complete algebraic structures within $\mathcal{P}$. 

We refer to the binary operations appearing in our main theorems as:
\begin{itemize}
\item two-color concatenation, denoted by $\cup^2$;
\item concatenation and reduction modulo $k$, denoted by $\cup_k$;
\item component-wise addition and reduction modulo $k$, denoted by $+_k$;
\item component-wise multiplication and zero-reduction modulo $k$, denoted by $\cdot_k^{(0)}$; and
\item component-wise multiplication and reduction modulo $k$, denoted by $\cdot_k$,
\end{itemize}
for any positive integer $k$.  These operations will be defined throughout Sections \ref{section_2color}, \ref{section_groups}, \ref{section_vector_spaces}, and \ref{section_ring}, along with examples and the proofs of our main theorems.  We begin with Theorem \ref{2_colors}, which is essentially a restatement of Theorem \ref{Schneider1} using different notation.  To state Theorem \ref{2_colors}, we must first define the set $\mathcal{P}\times\mathcal{P}$ as the set of two-color partitions, where each part can be one of two colors (see \cite{A1,A2,A3,A4} for more information and recent results on two-color partitions).  Figure \ref{partitionsOf2} shows an example of a two-color partition, where the two colors are white and gray, and white (resp. gray) parts are denoted with a subscript of $w$ (resp. $g$).

    \begin{figure}[h]
        \centering
        \begin{tikzpicture}[scale=0.6,step=1cm]
        		%shade the gray boxes
		\fill[gray!50!white] (3,0) rectangle (5,-1);
		\fill[gray!50!white] (8,-1) rectangle (9,-2);
		\fill[gray!50!white] (10,0) rectangle (11,-2);
		
		%draw all the boxes
        		\draw (0,0) grid (2,-1);
		\draw (3,0) grid (5,-1);
		\draw (6,0) grid (7,-2);
		\draw (8,0) grid (9,-2);
		\draw (10,0) grid (11,-2);
	\end{tikzpicture}
	\caption{Two-color partitions of 2: $\left(2_w\right)$, $\left(2_g\right)$, $\left(1_w,1_w\right)$, $\left(1_w,1_g\right)$, $\left(1_g,1_g\right)$}
        \label{partitionsOf2}
     \end{figure}
     
We say that a two-color partition $\lambda\in\mathcal{P}\times\mathcal{P}$ is \textit{reduced} if $\lambda$ contains no pair of parts of the same size and different colors, i.e. the white sub-partition of $\lambda$ and the gray sub-partition of $\lambda$ contain no common parts (see Section \ref{section_2color} for further discussion of reduction).  Denote the set of reduced two-color partitions by $(\mathcal{P}\times\mathcal{P})^-$.

\begin{theorem}\label{2_colors}
The set $(\mathcal{P}\times\mathcal{P})^-$ of reduced two-color partitions forms an abelian group under two-color concatenation, $\cup^2$.
\end{theorem}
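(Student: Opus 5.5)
The plan is to make the operation precise and then reduce the group axioms to a known group. For $\lambda,\gamma\in(\mathcal{P}\times\mathcal{P})^-$, the product $\lambda\cup^2\gamma$ is formed in three steps. First concatenate all parts of both partitions, keeping their colors, and reorder them to be weakly decreasing. Then, for each size $j$, repeatedly cancel one white part of size $j$ against one gray part of size $j$ until at most one color of size $j$ remains. The outcome is reduced by construction, so the operation is well defined as a map $(\mathcal{P}\times\mathcal{P})^-\times(\mathcal{P}\times\mathcal{P})^-\to(\mathcal{P}\times\mathcal{P})^-$.

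The cleanest route is to encode each reduced two-color partition by its \emph{signed multiplicity function}. Define $f_\lambda:\mathbb{Z}_{>0}\to\mathbb{Z}$ by $f_\lambda(j)=m_w(j)-m_g(j)$, where $m_w(j)$ and $m_g(j)$ are the numbers of white and gray parts of size $j$ in $\lambda$. Since $\lambda$ is reduced, at most one of $m_w(j),m_g(j)$ is nonzero. Hence $\lambda$ is recovered from $f_\lambda$: there are $f_\lambda(j)$ white parts of size $j$ if $f_\lambda(j)\ge 0$, and $-f_\lambda(j)$ gray parts otherwise. Moreover $f_\lambda$ has finite support, and every finitely supported $f$ arises this way. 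This gives a bijection $\Phi$ from $(\mathcal{P}\times\mathcal{P})^-$ onto the free abelian group $\bigoplus_{j\ge1}\mathbb{Z}$ of finitely supported integer sequences under pointwise addition.

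The key step is to check that $\Phi(\lambda\cup^2\gamma)=f_\lambda+f_\gamma$. After concatenation, the white and gray multiplicities of size $j$ are $m_w^\lambda(j)+m_w^\gamma(j)$ and $m_g^\lambda(j)+m_g^\gamma(j)$. Each cancellation step lowers both counts by one and so preserves their difference. When cancellation terminates, the difference is therefore $f_\lambda(j)+f_\gamma(j)$, and only one color survives, which is exactly the reduced partition encoded by $f_\lambda+f_\gamma$. This also shows the result is independent of the order in which cancellations are performed, which is the main subtlety in the well-definedness and associativity of the operation.

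Once $\Phi$ is known to be an operation-preserving bijection onto an abelian group, all axioms transfer. Associativity and commutativity come from those of integer addition. The identity is $\emptyset$, corresponding to the zero sequence. The inverse of $\lambda$ is obtained by swapping the colors of all its parts, corresponding to $-f_\lambda$; this swapped partition is again reduced. This is the analogue of Schneider's antipartitions in Theorem \ref{Schneider1}, with gray parts playing the role of antiparts.
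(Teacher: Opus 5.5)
Your proof is correct, but it takes a different route from the paper's.

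\textbf{The paper's route.} It checks the axioms directly. Identity and inverses are verified by inspection. Associativity and commutativity are verified by a case analysis on how many copies of a part $\alpha$, and of its color-swapped partner $\alpha^{-1}$, lie in $\lambda$, $\pi$, $\tau$ and in the intermediate products (Cases 1.1 through 3.3, most stated without details).

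\textbf{Your route.} You isolate the one invariant all those cases track: the signed multiplicity $m_w(j)-m_g(j)$ adds under concatenation and is unchanged by each cancellation. From this you build an operation-preserving bijection $\Phi$ onto $\bigoplus_{j\ge1}\mathbb{Z}$, and every axiom is inherited from that group.

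\textbf{What your approach buys.}
\begin{itemize}
\item It is shorter and leaves no cases to the reader.
\item It settles a well-definedness point the paper takes for granted, namely that the result does not depend on the order of cancellations.
\item It identifies the group up to isomorphism as the free abelian group of countable rank. This gives the paper's subsequent isomorphism with Schneider's $\mathcal{Q}$ almost for free, since the analogous encoding (number of parts minus number of antiparts of each size) identifies $\mathcal{Q}$ with $\bigoplus_{j\ge1}\mathbb{Z}$ as well.
\end{itemize}

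\textbf{What the paper's approach buys.} It stays entirely at the level of counting parts and never invokes an auxiliary group. In effect, though, it is a case-by-case check of your single identity $\Phi(\lambda\cup^2\gamma)=f_\lambda+f_\gamma$, with uniqueness of the reduced form left implicit.
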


We will prove in Section \ref{section_2color} that the group structure presented in Theorem \ref{2_colors} is isomorphic to the group structure in Theorem \ref{Schneider1}.  Note that allowing parts to occur in one of two colors, similarly to including both positive and negative integers as parts, significantly enlarges the set $\mathcal{P}$.

More importantly, we prove the following group structures on sets contained in $\mathcal{P}$.  Let $\mathcal{P}_{\cup_k}$ denote the set of partitions where the multiplicity of each part is less than $k$.  From a representation-theoretic perspective, this type of multiplicity-restricted partition is commonly referred to as a \textit{$k$-regular partition}.  Let $\mathcal{P}_k^+$ denote the set of partitions where the difference between any two consecutive parts is less than $k$ and the smallest part is less than $k$.  The partitions in $\mathcal{P}_k^+$ have been called \emph{pattern-avoiding} partitions; in particular, using the terminology and notation of \cite{YZ}, we can think of $\mathcal{P}_k^+$ as containing all partitions with smallest part less than $k$ which avoid the set of patterns $\{[j]:j\geq k\}$, meaning the partitions $\lambda\in\mathcal{P}_k^+$ satisfy the difference condition $\lambda_i-\lambda_{i+1}<k$ for all $1\leq i<\ell(\lambda)$.  In fact, there exists a general algorithm that can enumerate partitions of any size $n$ in this set (see the algorithms developed in \cite{YZ}).

\begin{theorem}\label{group_structures}
Let $k$ be any positive integer.
\begin{enumerate}
\item The set $\mathcal{P}_{\cup_k}$ forms an abelian group under concatenation and reduction modulo $k$, $\cup_k$;
\item The set $\mathcal{P}_k^+$ forms an abelian group under component-wise addition and reduction modulo $k$, $+_k$.
\end{enumerate}
\end{theorem}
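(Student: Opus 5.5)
The plan for both parts of Theorem \ref{group_structures} is to avoid checking the group axioms directly. Instead, I will exhibit an explicit bijection between each set of restricted partitions and the group $\bigoplus_{i\geq 1}\mathbb{Z}_k$ of finitely supported sequences with entries in $\mathbb{Z}_k=\{0,1,\dots,k-1\}$ under coordinate-wise addition modulo $k$. I will then show that the operation $\cup_k$ (respectively $+_k$) corresponds under this bijection to coordinate-wise addition. Associativity, commutativity, the identity (the empty partition $\emptyset$) and inverses then transfer immediately. In the degenerate case $k=1$ both sets reduce to $\{\emptyset\}$, the trivial group.

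For part (1), I will send $\lambda\in\mathcal{P}_{\cup_k}$ to its multiplicity vector $\bigl(m_\lambda(1),m_\lambda(2),\dots\bigr)$. Since every multiplicity is less than $k$, this vector lies in $\bigoplus_{i\geq1}\mathbb{Z}_k$, and a partition is determined by its multiplicities, so the map is a bijection. Concatenation adds multiplicities, and reduction modulo $k$ (which I read as deleting $k$ copies of any part repeated at least $k$ times, until fewer than $k$ remain) replaces each multiplicity by its least residue modulo $k$. Hence $\lambda\cup_k\gamma$ corresponds to the coordinate-wise sum modulo $k$. The inverse of $\lambda$ is then the partition in which each part $a$ has multiplicity $k-m_\lambda(a)$ whenever $m_\lambda(a)>0$.

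For part (2), I will use the difference sequence $d(\lambda)=(\lambda_1-\lambda_2,\lambda_2-\lambda_3,\dots,\lambda_{\ell-1}-\lambda_\ell,\lambda_\ell)$, padded with zeros. It is recovered from $\lambda$ by $\lambda_i=\sum_{j\geq i}d_j$. The defining conditions of $\mathcal{P}_k^+$ say exactly that every $d_i$ lies in $\{0,\dots,k-1\}$. Conversely, any finitely supported sequence with entries in this range gives a partition in $\mathcal{P}_k^+$: the parts are positive up to the last nonzero $d_i$, and trailing zeros of $d$ simply correspond to $\lambda_j=0$ beyond the length. So $d$ is a bijection onto $\bigoplus_{i\geq1}\mathbb{Z}_k$.

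Left-justified component-wise addition satisfies $d(\lambda\oplus\gamma)=d(\lambda)+d(\gamma)$, using the convention $\lambda_j=0$ for $j>\ell(\lambda)$. The main point to verify is that the reduction modulo $k$ in the definition of $+_k$ acts on $\lambda\oplus\gamma$ by replacing each difference (including the smallest part) by its residue modulo $k$, equivalently subtracting $k$ from an initial block of parts wherever a gap is at least $k$. I also need to check that this lands in $\mathcal{P}_k^+$, with any resulting zero parts discarded. I expect this to be the main obstacle: the precise reduction rule must be matched with the difference-sequence description, and one must check that repeated reductions are order-independent and terminate. Once that is in place, the inverse of $\lambda$ is the partition with difference sequence $\bigl((k-d_i)\bmod k\bigr)_i$, and the theorem follows.
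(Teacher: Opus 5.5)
Your proof is correct, but for Part 2 it takes a different route from the paper.

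For Part 1 the paper checks the axioms directly. Its associativity argument is exactly a count of multiplicities modulo $k$, so your isomorphism $\lambda\mapsto(m_\lambda(i))_{i\geq1}$ onto $\bigoplus_{i\geq1}\mathbb{Z}_k$ has the same content, packaged so that all the axioms transfer at once.

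For Part 2 the paper introduces component-wise congruence $\sim_k$ on all of $\mathcal{P}$. It proves that $\mathcal{P}/_{\sim_k}$ is a group under component-wise addition, and then shows that component-wise reduction gives an isomorphism $\mathcal{P}/_{\sim_k}\to(\mathcal{P}_k^+,+_k)$. Your difference sequence $d(\lambda)$ is the multiplicity vector of the conjugate partition. So you are in effect deriving Part 2 from Part 1 through the conjugation isomorphism, which the paper proves only afterwards (Theorem \ref{isom_2}) and which your argument gives for free.

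The step you flag as the main obstacle is a one-line check. The reduction of Definition \ref{componentwise_reduction_defn} is a single backward pass, and $\lambda'_\ell$ is the least nonnegative residue of $\lambda_\ell$. Since $\lambda'_{j+1}\equiv\lambda_{j+1}\pmod k$, the recursion gives $\lambda'_j-\lambda'_{j+1}=x-\lfloor x/k\rfloor k$ with $x=\lambda_j-\lambda'_{j+1}\equiv\lambda_j-\lambda_{j+1}\pmod k$. Hence there is no order-independence or termination issue, and zero parts can only be trailing.

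Your route also cleanly proves a fact the paper uses without separate justification, both for the well-definedness of its map and in its homomorphism step. Each $\sim_k$-class contains exactly one element of $\mathcal{P}_k^+$, because $\lambda\sim_k\pi$ if and only if $d(\lambda)\equiv d(\pi)\pmod k$ entrywise (with zero padding).

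What the paper's quotient framework buys in return is reuse. The same viewpoint, a congruence class with a canonical reduced representative, drives its later multiplicative structures $\cdot_p^{(0)}$ and $\cdot_k$. The difference sequence is not compatible with component-wise multiplication, so your approach does not extend there.
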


For our next group structure, we let $\mathcal{P}_{k,r}^{(0)}$, for any fixed non-negative integer $r$, denote the set of partitions of length $r$ with no parts divisible by $k$, where the difference between any two consecutive parts is less than $k$ and the smallest part is less than $k$.  We note that partitions with no parts divisible by $k$ are classically called \textit{$k$-regular partitions} as well, so $\mathcal{P}_{k,r}^{(0)}$ can be viewed as the set of $k$-regular partitions of length $r$ where the difference between any two consecutive parts is less than $k$ and the smallest part is less than $k$.

\begin{theorem}\label{mult_group_structure}
Let $p$ be a prime, and let $r$ be a non-negative integer.  The set $\mathcal{P}_{p,r}^{(0)}$ forms an abelian group under component-wise multiplication and zero-reduction modulo $p$, $\cdot_p^{(0)}$.
\end{theorem}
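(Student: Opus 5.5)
Since the operation $\cdot_p^{(0)}$ is only named in the statement, I will use the natural reading, by analogy with $+_k$. Multiply two partitions of length $r$ componentwise. Then "zero-reduce modulo $p$": replace the resulting sequence of parts by the unique element of $\mathcal{P}_{p,r}^{(0)}$ that has the same residues modulo $p$ in corresponding positions.

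The plan is to build an explicit bijection
\[
\Phi:\mathcal{P}_{p,r}^{(0)}\to (\mathbb{Z}_p^\times)^r ,
\]
show that $\cdot_p^{(0)}$ corresponds to componentwise multiplication, and then pull back the group structure. The target is a direct product of $r$ copies of the cyclic group $\mathbb{Z}_p^\times$, so it is an abelian group of order $(p-1)^r$. The case $r=0$ is trivial: both sides are the one-element group containing $\emptyset$.

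\textbf{Step 1 (the encoding).} For $\lambda=(\lambda_1,\dots,\lambda_r)\in\mathcal{P}_{p,r}^{(0)}$, set $\Phi(\lambda)=(\lambda_1 \bmod p,\dots,\lambda_r \bmod p)$. No part is divisible by $p$, so every entry lies in $\mathbb{Z}_p^\times$.

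\textbf{Step 2 (injectivity).} Work from the bottom up.
\begin{itemize}
\item The smallest part satisfies $0<\lambda_r<p$, so $\lambda_r$ is determined by its residue.
\item Given $\lambda_{i+1}$, the difference $d=\lambda_i-\lambda_{i+1}$ satisfies $0\le d<p$. Hence $d$ is determined by $\lambda_i-\lambda_{i+1}\bmod p$, and so $\lambda_i$ is determined.
\end{itemize}

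\textbf{Step 3 (surjectivity).} Given nonzero residues $(a_1,\dots,a_r)$, build the inverse explicitly.
\begin{itemize}
\item Let $\lambda_r$ be the representative of $a_r$ in $\{1,\dots,p-1\}$.
\item Recursively, let $\lambda_i=\lambda_{i+1}+d_i$, where $d_i\in\{0,\dots,p-1\}$ is the representative of $a_i-a_{i+1}$.
\end{itemize}
This sequence is weakly decreasing. Consecutive differences are less than $p$, and the smallest part is less than $p$. Since $\lambda_i\equiv a_i\not\equiv 0 \pmod p$, no part is divisible by $p$. So the result lies in $\mathcal{P}_{p,r}^{(0)}$.

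\textbf{Step 4 (transport of the operation).} For $\lambda,\gamma\in\mathcal{P}_{p,r}^{(0)}$, set $\lambda\cdot_p^{(0)}\gamma=\Phi^{-1}(\Phi(\lambda)\Phi(\gamma))$. The main thing to check is that this agrees with the paper's zero-reduction of the componentwise product $(\lambda_i\gamma_i)_i$. First, primality of $p$ guarantees that each $\lambda_i\gamma_i$ is again nonzero modulo $p$. Second, the reduction procedure must depend only on these residues; I expect this to be exactly the defining recipe. With that in hand, closure, associativity, commutativity, identity and inverses all follow from $\mathbb{Z}_p^\times$:
\begin{itemize}
\item the identity is $\Phi^{-1}(1,\dots,1)=(1,1,\dots,1)$;
\item inverses come from modular inverses in each component.
\end{itemize}

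\textbf{Main obstacle.} The key point is that primality is exactly what prevents a product of parts from becoming divisible by $p$. Without it, the set would not be closed, which explains the hypothesis in the statement. The only delicate step is matching the paper's definition of zero-reduction with $\Phi^{-1}$, namely that it reduces the bottom part to $\{1,\dots,p-1\}$ and each consecutive difference to $\{0,\dots,p-1\}$. If the definition is phrased differently, I would first verify that its output lies in $\mathcal{P}_{p,r}^{(0)}$ and preserves residues. Injectivity from Step 2 then forces it to equal $\Phi^{-1}$ of the product of residues.
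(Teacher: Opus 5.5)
Your proof is correct, and it reaches the result by a more direct route than the paper. The paper first forms the quotient $\mathcal{P}_r/_{\sim_p'}$ of all length-$r$ partitions under componentwise zero-congruence and argues that this quotient is an abelian group under componentwise multiplication. It then shows that componentwise zero-reduction $\psi$ is an isomorphism from that quotient onto $\mathcal{P}_{p,r}^{(0)}$. You skip the quotient. You map $\mathcal{P}_{p,r}^{(0)}$ by residues straight onto the concrete group $(\mathbb{Z}_p^\times)^r$, and you prove bijectivity by hand using the bottom part and the consecutive differences in $\{0,\dots,p-1\}$.

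The step you left as an expectation does check out. The paper's reduction sets $\lambda_r'=\lambda_r-\lfloor\lambda_r/p\rfloor p$ and $\lambda_j'=\lambda_j-g_jp$ with $g_j=\lfloor(\lambda_j-\lambda_{j+1}')/p\rfloor$. Hence $\lambda_j'\equiv\lambda_j\pmod p$ and $0\le\lambda_j'-\lambda_{j+1}'<p$. For a product with no part divisible by $p$, the bottom part stays nonzero, so the length stays $r$ and the output is exactly $\Phi^{-1}$ of the residue vector.

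Your route gives an explicit identification $\mathcal{P}_{p,r}^{(0)}\cong(\mathbb{Z}_p^\times)^r$ and hence the order $(p-1)^r$. The paper's later count $(p-1)p^{r-1}+1$ overlooks that one of the $p$ candidates for each higher part is divisible by $p$.

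Your route also avoids a genuine problem in the paper's argument. The paper's quotient contains the class $[\emptyset]$ of length-$r$ partitions having a part divisible by $p$. The map $\psi$ sends this class to $\emptyset$, which the paper treats as an element of $\mathcal{P}_{p,r}^{(0)}$. But that element is absorbing under multiplication and has no inverse. Your reading, in which every element has length exactly $r$ so that $\emptyset\notin\mathcal{P}_{p,r}^{(0)}$ for $r\ge 1$, is the one under which the statement is actually true, and your argument proves it.
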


Interestingly, our first group structure on the set $\mathcal{P}_{\cup_p}$ of partitions into parts with multiplicity less than $p$, where $p$ is a prime, is closely related to the set of partitions containing no part divisible by $p$, from a combinatorial standpoint.  Glaisher's Theorem \cite{Glaisher} famously implies that the number of partitions in $\mathcal{P}_{\cup_p}$ of any given size $n$ is equal to the number of partitions of $n$ (of any length) with no part divisible by $p$; in fact, Glaisher's Theorem holds for any positive integer $k$.  Note that the explicit bijection that Glaisher constructed between these two sets does not preserve length, so the length restriction on $\mathcal{P}_{p,r}^{(0)}$ is where this relation breaks down.

Two of our partition operations lead to vector space structures on partitions as well.

\begin{theorem}\label{vector_space_structures}
Let $p$ be any prime, and let $\mathbb{Z}_p$ denote the finite field with $p$ elements.  Let $k$ be any positive integer.
\begin{enumerate}
\item The abelian group $\left(\mathcal{P}_{\cup_p},\cup_p\right)$ forms a vector space over $\mathbb{Z}_p$;
\item The abelian group $\left(\mathcal{P}_k^+,+_k\right)$ forms a vector space over $\mathbb{Z}_p$.
\end{enumerate}
\end{theorem}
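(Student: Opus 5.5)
The plan is to reduce both parts of Theorem~\ref{vector_space_structures} to one structural fact about the abelian groups of Theorem~\ref{group_structures}, which I will take as already proved. Let $(G,\ast)$ be an abelian group with identity $\emptyset$. Define scalar multiplication for $c\in\mathbb{Z}_p$, represented by $c\in\{0,1,\dots,p-1\}$, by
\[
c\lambda := \underbrace{\lambda\ast\lambda\ast\cdots\ast\lambda}_{c\text{ times}}, \qquad 0\lambda:=\emptyset .
\]
This mirrors the convention $c\lambda=\lambda\oplus\cdots\oplus\lambda$ from \cite{CDHS}. The axioms $c(\lambda\ast\gamma)=c\lambda\ast c\gamma$ and $1\lambda=\lambda$ follow from commutativity and associativity alone. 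The axioms $(c+d)\lambda=c\lambda\ast d\lambda$ and $(cd)\lambda=c(d\lambda)$ hold exactly when the integer multiple $n\lambda$ depends only on $n \bmod p$. So everything reduces to the single claim that $p\lambda=\emptyset$ for every $\lambda\in G$, i.e.\ that $G$ has exponent dividing $p$. In fact this condition is also necessary, since $p\lambda=(1+\cdots+1)\lambda=0\lambda$ in any $\mathbb{Z}_p$-vector space.

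For part (1), I would encode $\lambda\in\mathcal{P}_{\cup_p}$ by its multiplicity vector $(m_\lambda(1),m_\lambda(2),\dots)$. Each entry lies in $\{0,\dots,p-1\}$ and only finitely many are nonzero. Concatenation adds multiplicities, and reduction modulo $p$ reduces each multiplicity modulo $p$. Hence this encoding is a group isomorphism $(\mathcal{P}_{\cup_p},\cup_p)\cong\bigoplus_{j\ge1}\mathbb{Z}_p$. Then $p\lambda$ has every multiplicity equal to $p\,m_\lambda(j)\equiv 0$, so $p\lambda=\emptyset$. The same isomorphism makes the scalar action concrete: $c\lambda$ is the partition whose part $j$ has multiplicity $c\,m_\lambda(j) \bmod p$. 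This also shows that the partitions $(j)$, $j\ge1$, form a basis.

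For part (2), I would follow the same route using the difference encoding. Set $\lambda_{\ell+1}=0$ and map $\lambda\in\mathcal{P}_k^+$ to its vector of consecutive differences $(\lambda_1-\lambda_2,\lambda_2-\lambda_3,\dots)$. The defining conditions of $\mathcal{P}_k^+$ say precisely that each difference, including the last one $\lambda_\ell$, lies in $\{0,\dots,k-1\}$. Component-wise addition adds difference vectors, so I expect the reduction in $+_k$ to act as reduction of each difference modulo $k$. The first step is to verify this from the definition of $+_k$ given in Section~\ref{section_vector_spaces}, identifying $(\mathcal{P}_k^+,+_k)$ with a direct sum of finitely supported sequences. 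I would then compute $p\lambda$ through this encoding and check that it returns $\emptyset$.

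I expect this last step to be the main obstacle. If the reduction is literally "differences modulo $k$", the group is $\bigoplus\mathbb{Z}_k$, and this has exponent dividing $p$ only when $k$ divides $p$. The statement is claimed for every $k$, so the proof must use the precise form of the reduction rule or of the scalar action in the paper's definition of $+_k$. I would first look for an explicit description under which $p\lambda=\emptyset$ for all $k$. Failing that, I would check whether the intended scalar multiplication differs from repeated addition, and verify the four module axioms directly in the difference coordinates.
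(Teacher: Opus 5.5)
Your argument for part (1) is correct, and it is organized differently from the paper's. The paper defines $a\lambda$ as the $a$-fold $\cup_p$-sum and checks the axioms one part size at a time, comparing least non-negative residues of multiplicities modulo $p$ (its $M_1,\dots,M_{11}$). You instead identify $(\mathcal{P}_{\cup_p},\cup_p)$ with $\bigoplus_{j\ge1}\mathbb{Z}_p$ through multiplicity vectors and reduce everything to the exponent condition $p\lambda=\emptyset$. The arithmetic is the same, but your packaging buys several things. It shows the condition is necessary as well as sufficient, and that the scalar action is forced. It also gives the basis of Theorem~\ref{basis_thm} at once. Most importantly, it shows the argument works only because the reduction modulus of $\cup_p$ equals the characteristic of the field.

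That last point is why the obstacle you found in part (2) cannot be overcome. It is a counterexample, and you should conclude that part (2) is false unless $k\mid p$, i.e.\ $k\in\{1,p\}$.

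Both of your fallback routes are closed. The reduction really is ``differences modulo $k$''. In Definition~\ref{componentwise_reduction_defn}, $\lambda_j'-\lambda_{j+1}'$ is the least non-negative residue of $\lambda_j-\lambda_{j+1}'\equiv\lambda_j-\lambda_{j+1}\pmod{k}$, and $\lambda_\ell'$ is that of $\lambda_\ell$. Equivalently, Theorem~\ref{isom_2} gives $(\mathcal{P}_k^+,+_k)\cong\mathcal{P}_{\cup_k}\cong\bigoplus_{j\ge1}\mathbb{Z}_k$. Changing the scalar action cannot help either, by your own necessity remark: every nonzero vector of a $\mathbb{Z}_p$-vector space has additive order $p$, while for $k\ge2$ the partition $(1)\in\mathcal{P}_k^+$ has order $k$. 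For example, with $k=2$ and $p=3$ we get $(1)+_2(1)=\emptyset$, so $(1)+_2(1)+_2(1)=(1)\neq\emptyset$.

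The paper's proof of part (2) does not get around this; it errs at exactly this step. In showing $a(b[\lambda])=(ab)[\lambda]$, it identifies $[((ab)\lambda_1,\dots,(ab)\lambda_\ell)]$, where $ab$ is the integer product, with $(ab)[\lambda]$ for $ab\in\mathbb{Z}_p$, i.e.\ with the $(ab\bmod p)$-fold sum. The two differ by the class of $(cp\lambda_1,\dots,cp\lambda_\ell)$ with $c=\lfloor ab/p\rfloor$, which in general is not $[\emptyset]$ when $k\nmid p$. Take $k=2$, $p=3$, $a=b=2$, $\lambda=(1)$: one side is $[(4)]=[\emptyset]$ and the other is $1[(1)]=[(1)]$. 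The omitted check of $(a+b)[\lambda]=a[\lambda]+b[\lambda]$ makes the same conflation. It fails for $a=1$, $b=p-1$, $\lambda=(1)$ whenever $k\nmid p$, since the left side is $0[(1)]=[\emptyset]$ and the right side is $[(p)]$.

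So the correct statement is that $(\mathcal{P}_k^+,+_k)$ admits a $\mathbb{Z}_p$-vector space structure if and only if $k\in\{1,p\}$. For $k=p$, your difference encoding proves it directly, as does part (1) transported through Theorem~\ref{isom_2}.
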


The most sophisticated type of structure we obtain on partitions is a ring.  The following theorem describes the ring structure.

\begin{theorem}\label{ring_structure}
Let $k$ be any positive integer.  The set $\mathcal{P}_k^+$ forms a commutative ring under component-wise addition and reduction modulo $k$, $+_k$, and component-wise multiplication and reduction modulo $k$, $\cdot_k$.
\end{theorem}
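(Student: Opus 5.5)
We take the additive structure from Theorem \ref{group_structures}(2), which already makes $\left(\mathcal{P}_k^+,+_k\right)$ an abelian group. So it remains to show that $\cdot_k$ is a well-defined binary operation on $\mathcal{P}_k^+$ that is commutative and associative and distributes over $+_k$. (If the paper's convention requires a unity, we also exhibit one.) We do not check these axioms partition by partition. Instead we transport everything to a model in which both operations are ordinary arithmetic modulo $k$, and deduce the axioms from the corresponding identities in $\mathbb{Z}$.

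\textbf{The model.} Write $\mathcal{S}$ for the set of weakly decreasing, eventually zero sequences $a=(a_1,a_2,\dots)$ of nonnegative integers, that is, $\mathcal{P}$ with every $\lambda$ padded by zeros. Component-wise addition and component-wise multiplication of padded sequences preserve weak decrease and eventual vanishing. Hence $(\mathcal{S},+,\cdot)$ is a commutative semiring, and its axioms are inherited index by index from $\mathbb{Z}_{\geq 0}$.

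\textbf{The reduction map.} Let $R:\mathcal{S}\to\mathcal{P}_k^+$ denote the reduction modulo $k$ built into $+_k$ and $\cdot_k$. By definition of the operations,
\[
\lambda+_k\mu=R(\lambda+\mu),\qquad \lambda\cdot_k\mu=R(\lambda\cdot\mu).
\]
We record three properties of $R$:
\begin{enumerate}
\item $R$ is the identity on $\mathcal{P}_k^+$;
\item $R$ is compatible with addition: $R\bigl(R(a)+b\bigr)=R(a+b)$;
\item $R$ is compatible with multiplication: $R\bigl(R(a)\cdot b\bigr)=R(a\cdot b)$.
\end{enumerate}
Property (1) holds because partitions in $\mathcal{P}_k^+$ already satisfy the difference condition and the smallest-part condition. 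Property (2) is implicit in the proof of Theorem \ref{group_structures}(2).

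Given (1)--(3), $R$ is a surjective homomorphism of semirings onto $\left(\mathcal{P}_k^+,+_k,\cdot_k\right)$. Each ring axiom then follows mechanically. For example, associativity of $\cdot_k$ reads
\[
(\lambda\cdot_k\mu)\cdot_k\nu=R\bigl(R(\lambda\mu)\nu\bigr)=R(\lambda\mu\nu)=R\bigl(\lambda R(\mu\nu)\bigr)=\lambda\cdot_k(\mu\cdot_k\nu).
\]
Distributivity and commutativity follow the same way from the corresponding identities in $\mathcal{S}$. The identity $\emptyset$ of $+_k$ and the inverses of $+_k$ come from Theorem \ref{group_structures}(2). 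If a unity is wanted, it should be the reduction of the appropriate constant sequence, and we check directly that it acts trivially.

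\textbf{Expected main obstacle.} The real difficulty is property (3). Component-wise multiplication does not interact simply with the difference data $\lambda_i-\lambda_{i+1}$ that $R$ controls, because $\lambda_i\mu_i-\lambda_{i+1}\mu_{i+1}$ is not a function of the individual differences. To handle this, we first make $R$ explicit. We give an invariant description of when two sequences $a,b\in\mathcal{S}$ reduce to the same partition. A natural guess is that this happens exactly when $a_i\equiv b_i \pmod k$ for all $i$, up to the normalization that forces the smallest part below $k$.

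We then show that this equivalence relation is a congruence for component-wise multiplication. If $a_i\equiv a_i'\pmod k$ for every $i$, then $a_ib_i\equiv a_i'b_i\pmod k$ for every $i$, so $a\cdot b$ and $a'\cdot b$ reduce to the same partition. Proving that $R$ is characterized by this component-wise congruence, and that $R(a)$ is the unique element of $\mathcal{P}_k^+$ in the class of $a$, is the crux of the argument. We would attack it by induction on the length, working from the last part upward and adjusting each part by multiples of $k$. Once this characterization is in place, $\left(\mathcal{P}_k^+,+_k,\cdot_k\right)$ is identified with a quotient of $\mathcal{S}$ by a congruence, and the ring axioms are inherited as described above.
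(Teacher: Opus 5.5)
Your main argument is essentially the paper's. The paper proves associativity of $\cdot_k$ and distributivity by observing that both sides are component-wise congruent modulo $k$, since the identities hold index by index in $\mathbb{Z}$. Both sides are also component-wise reduced, hence equal. That is exactly your properties (2)--(3) combined with uniqueness of the reduced representative of a $\sim_k$-class. Packaging this as a surjective homomorphism $R$ is cleaner, and it correctly isolates that uniqueness statement, which the paper uses with little justification. No extra ``normalization'' caveat is needed: with zero padding, $R(a)=R(b)$ exactly when $a_i\equiv b_i\pmod{k}$ for all $i\geq 1$. The uniqueness induction should start at the largest index with nonzero residue, since a given representative may have trailing parts divisible by $k$.

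The one false claim is the unity hedge. For $k\geq 2$ there is no multiplicative identity. If $u\in\mathcal{P}_k^+$ has length $m$, then $\lambda=(1,1,\dots,1)$ with $m+1$ parts lies in $\mathcal{P}_k^+$. But $u\cdot_k\lambda$ has length at most $m$, so $u\cdot_k\lambda\neq\lambda$. The constant sequence $(1,1,1,\dots)$ you have in mind is not eventually zero, so it is not in $\mathcal{S}$, and no truncation of it works. In fact $\lambda\mapsto(\lambda_j \bmod k)_{j\geq 1}$ identifies $\left(\mathcal{P}_k^+,+_k,\cdot_k\right)$ with the direct sum $\bigoplus_{j\geq 1}\mathbb{Z}_k$, which is non-unital. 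So the theorem holds only under the convention that a ring need not have a unity. That is the paper's implicit convention, since its proof never exhibits one. You should drop the hedge rather than attempt the check.
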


Note that the ring $\left(\mathcal{P}_k^+,+_k,\cdot_k\right)$, for any positive integer $k$, is not a field.  By definition (see Section \ref{section_groups}), these operations require component-wise arithmetic to be reduced modulo $k$, so it is straightforward to see that there are zero divisors if $k$ is not prime.  For example, if $k=6$, then performing component-wise multiplication and reduction modulo 6 on the partitions $(9,4,2)\neq\emptyset$ and $(4,3,3)\neq\emptyset$ yields the product $(36,12,6),$ which reduces modulo 6 to the empty partition $\emptyset$.  However, modulo a prime $p$ and for some fixed non-negative integer $r$, the set $\mathcal{P}_{p,r}^{(0)}$ under the operations of component-wise addition and reduction modulo $p$ and component-wise multiplication and reduction modulo $p$ has no zero divisors, since we impose the additional restriction that any partition containing a part divisible by $p$ is automatically reduced to the empty partition $\emptyset$ (see Section \ref{section_groups} for more details on the property that removes zero divisors modulo $p$).  We hoped that the resulting absence of zero divisors would then lead to a field structure on $\mathcal{P}_{p,r}^{(0)}$.  Unfortunately, with the additional reduction of partitions with parts divisible by $p$ to $\emptyset$, additive inverses are no longer unique, and hence a ring structure is impossible on $\mathcal{P}_{p,r}^{(0)}$ under these two operations.

The rest of this paper is organized primarily by type of algebraic structure. In Section \ref{section_2color}, we define the operation of two-color concatenation, and we prove Theorem \ref{2_colors} as well as an isomorphism between $(\mathcal{P}\times\mathcal{P})^-$ and Schneider's group $\mathcal{Q}$.  In Section \ref{section_groups}, we define concatenation and reduction, component-wise addition and reduction, and component-wise multiplication and zero-reduction; we prove Theorems \ref{group_structures} and \ref{mult_group_structure}; and we investigate partitions contained in various subgroups.  In Section \ref{section_vector_spaces}, we prove Theorem \ref{vector_space_structures}.  In Section \ref{section_ring}, we define component-wise multiplication and reduction, we prove Theorem \ref{ring_structure}, and we investigate partitions contained in various ideals.

%%%%%%%%%%%%%%%%%%%%%%%%%%%%%%%%%%%%%%%

\section{Group Structure on Two-Color Partitions}\label{section_2color}
In this section, we define the operation of two-color concatenation on reduced two-color partitions, and we prove Theorem \ref{2_colors}.

\begin{definition}
We define the unary operation \textit{two-color reduction} on any two-color partition $\lambda\in\mathcal{P}\times\mathcal{P}$ by removing each pair of parts of $\lambda$ of the same size and different colors.  We call a two-color partition $\lambda$ \textit{reduced} if $\lambda$ contains no pair of parts of the same size and different colors.
\end{definition}

Figure \ref{twoColoredConcatenationRule} shows an illustration of reduction for two-color partitions.

\begin{figure}[h]
    \centering
    \begin{tikzpicture}[scale=0.6,step=1cm]
    \fill[gray!50!white] (0,-1) rectangle (2,-2);
    \fill[gray!50!white] (4,-1) rectangle (5,-2);
    \draw (0,0) grid (2,-2);
    \draw (4,0) grid (5,-2);
    \node at (3,-0.5) {$\cdots$};
    \node at (3,-1.5) {$\cdots$};
    \node at (6,-1) {$\to$};
    \node at (7,-1) {$\emptyset$};
    \end{tikzpicture}
    \caption{Two-color reduction}
    \label{twoColoredConcatenationRule}
\end{figure}

\begin{example}
    The two-color partition $\left(4_w, 3_g, 3_w, 2_g, 2_g, 1_w\right)$ would reduce to the two-color partition $\left(4_w, 2_g, 2_g, 1_w\right)$.  We denote this reduction by $\left(4_w, 3_g, 3_w, 2_g, 2_g, 1_w\right)\to\left(4_w, 2_g, 2_g, 1_w\right),$ and we say the two-color partition $\left(4_w, 2_g, 2_g, 1_w\right)$ is reduced.
\end{example}

Let $(\mathcal{P}\times\mathcal{P})^-$ denote the set of all reduced two-color partitions.  For any reduced two-color partition $\lambda=\left(\lambda_1,\dots,\lambda_\ell\right)\in(\mathcal{P}\times\mathcal{P})^-$, we use the following notation for inverses: if $\lambda_i$ is white, define $\lambda_i^{-1}$ to be the same size but gray; similarly, if $\lambda_i$ is gray, define $\lambda_i^{-1}$ to be the same size but white. Note that due to the reduction process, we get $(\lambda_i,\lambda_i^{-1})\to\emptyset$, as in Figure \ref{twoColoredConcatenationRule}.

\begin{definition}
We define the binary operation \textit{two-color concatenation}, denoted $\cup^2$, on $(\mathcal{P}\times\mathcal{P})^-$ as follows: for any two partitions $\lambda,\pi\in(\mathcal{P}\times\mathcal{P})^-$, define $\lambda\cup^2\pi$ as the concatenation of the parts of $\lambda$ and $\pi$, then reordering to yield a two-color partition in weakly decreasing order by size of its parts, then two-color reduction. 
\end{definition}

Figure \ref{two-color_concatenation} shows an example of two-color concatenation.

\begin{figure}[h]
\centering
\begin{tikzpicture}[scale=0.6,step=1cm]
\fill[gray!50!white] (0,-1) rectangle (2,-2);
\fill[gray!50!white] (6,0) rectangle (9,-1);
\fill[gray!50!white] (6,-3) rectangle (7,-4);
\fill[gray!50!white] (11,-1) rectangle (14,-2);
\draw (0,0) grid (4,-1);
\draw (0,-1) grid (2,-2);
\draw (0,-2) grid (1,-3);
\node at (5,-1) {$\cup^2$};
\draw (6,0) grid (9,-1);
\draw (6,-1) grid (8,-3);
\draw (6,-3) grid (7,-4);
\node at (10,-1) {$=$};
\draw (11,0) grid (15,-1);
\draw (11,-1) grid (14,-2);
\draw (11,-2) grid (13,-3);
\end{tikzpicture}
\caption{Two-color concatenation: $\left(4_w,2_g,1_w\right)\cup^2\left(3_g,2_w,2_w,1_g\right)=\left(4_w,3_g,2_w\right)$}
\label{two-color_concatenation}
\end{figure}

We now prove Theorem \ref{2_colors} to establish an abelian group structure on $(\mathcal{P}\times\mathcal{P})^-$.

\begin{proof}[Proof of Theorem \ref{2_colors}]
First, we note that $(\mathcal{P}\times\mathcal{P})^-$ is clearly closed under two-color concatenation. Let $\lambda =(\lambda_1,\dots,\lambda_\ell)$, $\pi= (\pi_1,\dots,\pi_p)$, $\tau = (\tau_1,\dots, \tau_t) \in (\mathcal{P}\times\mathcal{P})^-$. 
    %\section{Identity}

    The empty partition $\emptyset$ is the identity element of $\left((\mathcal{P}\times\mathcal{P})^-,\cup^2\right)$:
    \begin{align}
        \lambda \cup^2 \emptyset &=(\lambda_1,\lambda_2,\dots,\lambda_\ell) \cup^2 \emptyset= (\lambda_1,\lambda_2,\dots,\lambda_\ell)= \lambda;
        \\\emptyset \cup^2 \lambda &= \emptyset \cup^2 (\lambda_1,\lambda_2,\dots,\lambda_\ell)=(\lambda_1,\lambda_2,\dots,\lambda_\ell) = \lambda.
    \end{align}
    
    %\section{Inverses}
    Now consider the partition $\lambda^{-1}=\left(\lambda_1^{-1},\lambda_2^{-1},\dots,\lambda_\ell^{-1}\right)$, where each $\lambda_k^{-1}$ is the same size and opposite color of $\lambda_k$ for $1\leq k\leq \ell$. Then we have that $\lambda^{-1}$ is the inverse of $\lambda$ under $\cup^2$:
    \begin{align*}
        \left(\lambda_1,\lambda_2,\dots,\lambda_\ell\right)  \cup^2 \left(\lambda_1^{-1},\lambda_2^{-1},\dots,\lambda_\ell^{-1}\right) = \left(\lambda_1, \lambda_1^{-1}, \lambda_2, \lambda_2^{-1}, \dots, \lambda_\ell, \lambda_\ell^{-1}\right) = \emptyset;\\
       \left(\lambda_1^{-1},\lambda_2^{-1},\dots,\lambda_\ell^{-1}\right)  \cup^2 \left(\lambda_1,\lambda_2,\dots,\lambda_\ell\right) = \left(\lambda_1^{-1}, \lambda_1, \lambda_2^{-1}, \lambda_2, \dots, \lambda_\ell^{-1}, \lambda_\ell\right) = \emptyset.
    \end{align*}
    
    %\section{Associativity}
    Next, we need to show that $\left(\lambda \cup^2 \pi\right) \cup^2 \tau=\lambda \cup^2 \left(\pi \cup^2 \tau\right)$. On the left side, we have 
    \begin{align*}
        \left(\lambda \cup^2 \pi\right) \cup^2 \tau & = \delta \cup^2 \tau = \sigma
        %\\&= (\lambda_1, \dots, \lambda_l, \pi_1,\dots, \pi_p) \cup^2 (\tau_1, \dots, \tau_t)
        %\\&= (\delta_1, \dots, \delta_d) \cup^2 (\tau_1, \dots, \tau_t) = \delta \cup^2 \tau
    \end{align*}
    where $\delta$ is the result of $\lambda\cup^2\pi$ and $\sigma$ is the result of $\delta\cup^2\tau$. On the right side, we have 
    \begin{align*}
        \lambda \cup^2 \left(\pi \cup^2 \tau\right) &= \lambda \cup^2 \zeta = \phi
        %\\&= (\lambda_1, \dots, \lambda_l,) \cup^2 (\pi_1,\dots, \pi_p, \tau_1, \dots, \tau_t)
        %\\&= (\lambda_1, \dots, \lambda_l)  \cup^2 (\zeta_1, \dots, \zeta_z) = \lambda \cup^2 \zeta
    \end{align*}
    where $\zeta$ is the result of $\pi\cup^2\tau$ and $\phi$ is the result of $\lambda\cup^2\zeta$. If $\sigma=\phi=\emptyset$, then we are done. Assume $\sigma$ and $\phi$ are not both empty.  Then $\sigma$ or $\phi$ has at least one part.  Let $\alpha$ be a part in $\sigma$ or $\phi$ with multiplicity $m_\sigma\left(\alpha\right)$ (resp. $m_\phi\left(\alpha\right)$).  Without loss of generality, suppose $m_\sigma\left(\alpha\right)=n\geq1$. Since $\sigma$ is reduced, we have that $\alpha^{-1} \not\in \sigma$; therefore, when we concatenate $\lambda$ with $\pi$ and then with $\tau$, all the parts of size $\alpha^{-1}$ must reduce to $\emptyset$. We consider the following cases: 
    \begin{itemize}
        \item Case 1: $m_\delta\left(\alpha\right)=n+p$ and $m_\tau\left(\alpha^{-1}\right)=p$.
        
        We now consider sub-cases based on how many $\alpha$ parts are in $\lambda$ and $\pi$.
        \begin{itemize}
            \item Case 1.1: $m_\lambda\left(\alpha\right)=n+p+q$ and $m_\pi\left(\alpha^{-1}\right)=q$.
            
            When we concatenate $\pi$ with $\tau$, no pairs of $\alpha,\alpha^{-1}$ reduce. Instead, we have $m_\zeta\left(\alpha^{-1}\right)=m_\pi\left(\alpha^{-1}\right)+m_\tau\left(\alpha^{-1}\right)=q+p$. Then, when we concatenate $\lambda$ with $\zeta$, we see that $p+q$ pairs of $\alpha,\alpha^{-1}$ reduce, leaving $m_\phi\left(\alpha\right)=n$. 
            \item Case 1.2: $m_\lambda\left(\alpha^{-1}\right)=q$ and $m_\pi\left(\alpha\right)=n+p+q$.
            
            When we concatenate $\pi$ with $\tau$, we see that $p$ pairs $\alpha,\alpha^{-1}$ reduce, leaving $m_\zeta\left(\alpha\right)=m_\pi\left(\alpha\right)-m_\tau\left(\alpha^{-1}\right)=n+q$. Then, when we concatenate $\lambda$ with $\zeta$, we have that $q$ pairs $\alpha,\alpha^{-1}$ reduce, leaving $m_\phi\left(\alpha\right)=n$.
            \item Case 1.3: $m_\lambda\left(\alpha\right)=n+p-r$ and $m_\pi\left(\alpha\right)=r$.
            
            When we concatenate $\pi$ with $\tau$, we could get parts of $\alpha$ in $\zeta$ or parts of $\alpha^{-1}$ in $\zeta$, so we break into two more sub-sub-cases: 
            \begin{itemize}
                \item Case 1.3.1: $p\geq r$.
                
                When we concatenate $\pi$ with $\tau$, we see that $r$ pairs $\alpha,\alpha^{-1}$ reduce, leaving $m_\zeta\left(\alpha^{-1}\right)=p-r$. Then, when we concatenate $\lambda$ with $\zeta$, we have that $p-r$ pairs $\alpha,\alpha^{-1}$ reduce, leaving $m_\phi\left(\alpha\right)=n$.
                \item Case 1.3.2: $p< r$.
                
                When we concatenate $\pi$ with $\tau$, we see that $p$ pairs $\alpha,\alpha^{-1}$ reduce, leaving $m_\zeta\left(\alpha\right)=r-p$. Then, when we concatenate $\lambda$ with $\zeta$, no pairs $\alpha,\alpha^{-1}$ reduce, leaving $m_\phi\left(\alpha\right)=n$.
            \end{itemize}
        \end{itemize}
        Thus, all sub-cases of Case 1 end with the conclusion $m_\sigma(\alpha)=m_\phi(\alpha)$.
        \end{itemize}
        
        Counting parts in Cases 2 and 3 is very similar to counting parts in Case 1, so for the remainder of the proof, we simply state the required cases and omit the details.
        \begin{itemize}
        \item Case 2: $m_\delta\left(\alpha^{-1}\right)=p$ and $m_\tau\left(\alpha\right)=n+p$.
        
        We now consider sub-cases based on how many $\alpha$ parts are in $\lambda$ and $\pi$.  All sub-cases below end with the conclusion $m_\sigma(\alpha)=m_\phi(\alpha)$.
        \begin{itemize}
            \item Case 2.1: $m_\lambda\left(\alpha^{-1}\right)=p+q$ and $m_\pi\left(\alpha\right)=q$.
            \item Case 2.2: $m_\lambda\left(\alpha\right)=q$ and $m_\pi\left(\alpha^{-1}\right)=p+q$.
            
            When we concatenate $\pi$ with $\tau$, we could get parts of $\alpha$ in $\zeta$ or parts of $\alpha^{-1}$ in $\zeta$, so we break into two more sub-sub-cases: 
            \begin{itemize}
                \item Case 2.2.1: $q\geq n$.
                \item Case 2.2.2: $q< n$.
            \end{itemize}
            \item Case 2.3: $m_\lambda\left(\alpha^{-1}\right)=p-r$ and $m_\pi\left(\alpha^{-1}\right)=r$. Note that $r\leq p$ here.
        \end{itemize}
        \item Case 3: $m_\delta\left(\alpha\right)=n-m$ and $m_\tau\left(\alpha\right)=m$, with $n\geq m$.
        
        We now break into sub-cases based on how many $\alpha$ parts are in $\lambda$ and $\pi$.  All sub-cases below end with the conclusion $m_\sigma(\alpha)=m_\phi(\alpha)$.
        \begin{itemize}
            \item Case 3.1: $m_\lambda\left(\alpha\right)=n-m+q$ and $m_\pi\left(\alpha^{-1}\right)=q$.
            
            When we concatenate $\pi$ with $\tau$, we could get parts of $\alpha$ in $\zeta$ or parts of $\alpha^{-1}$ in $\zeta$, so we break into two more sub-sub-cases: 
            \begin{itemize}
                \item Case 3.1.1: $q\geq m$.
                \item Case 3.1.2: $q<m$.
            \end{itemize}
            \item Case 3.2: $m_\lambda\left(\alpha^{-1}\right)=q$ and $m_\pi\left(\alpha\right)=n-m+q$.
            \item Case 3.3: $m_\lambda\left(\alpha\right)=n-m-r$ and $m_\pi\left(\alpha\right)=r$.
        \end{itemize}
    \end{itemize}
    We have considered all possible cases that result in $m_\sigma\left(\alpha\right)=n$, for any part $\alpha$ of $\sigma$ or $\phi$.  Therefore, the operation $\cup^2$ is associative in $(\mathcal{P}\times\mathcal{P})^-$.
    
    %\section{Commutativity}
    Finally, we need to show that $\lambda \cup^2 \pi = \pi \cup^2 \lambda$. Consider the concatenation 
    \begin{align*}
        \lambda \cup^2 \pi &= \delta,
        %\\&= (\lambda_1, \dots, \lambda_l, \pi_1, \dots, \pi_p)
        %\\&= (\delta_1, \dots, \delta_d) = \delta
    \end{align*}
    where $\delta$ is again the result of $\lambda\cup^2\pi$, and the concatenation
    \begin{align*}
        \pi \cup^2 \lambda &= \eta,
        %\\&= (\pi_1, \dots, \pi_p, \lambda_1, \dots, \lambda_l, )
        %\\&= (\eta_1, \dots, \eta_e) = \eta
    \end{align*}
    where $\eta$ is the result of $\pi\cup^2\lambda$. 
    If $\delta=\eta=\emptyset$, then we are done. Assume $\delta$ and $\eta$ are not both empty. Then $\delta$ or $\eta$ has at least one part.  Let $\beta$ be a part in $\delta$ of multiplicity $m_\delta(\beta)\geq0$ and in $\eta$ of multiplicity $m_\eta(\beta)\geq0$.  Without loss of generality, suppose $m_\delta(\beta)=n\geq1$. We consider the following cases: 
    \begin{itemize}
        \item Case 1: $m_\lambda(\beta)=n+p$ and $m_\pi\left(\beta^{-1}\right)=p$.
        
        Concatenating $\pi$ with $\lambda$, there are $p$ pairs $\beta,\beta^{-1}$ that reduce, leaving $m_\eta(\beta)=n$.
        \item Case 2: $m_\lambda\left(\beta^{-1}\right)=p$ and $m_\pi(\beta)=n+p$.
        
        Concatenating $\pi$ with $\lambda$, there are $p$ pairs $\beta,\beta^{-1}$ that reduce, leaving $m_\eta(\beta)=n$. 
        \item Case 3: $m_\lambda(\beta)=n-m$ and $m_\pi(\beta)=m$, with $n\geq m$.
        
        Concatenating $\pi$ with $\lambda$, no pairs $\beta,\beta^{-1}$ reduce, leaving $m_\eta(\beta)=n$. 
    \end{itemize}
    We have considered all possible cases that result in $m_\delta(\beta)=n$, for each part $\beta$ appearing in $\delta$ or $\eta$. Therefore, the operation $\cup^2$ on $(\mathcal{P}\times\mathcal{P})^-$ is commutative.  Thus, $\left((\mathcal{P}\times\mathcal{P})^-,\cup^2\right)$ is an abelian group. 
\end{proof}

The abelian group $\left((\mathcal{P}\times\mathcal{P})^-,\cup^2\right)$ is isomorphic to the abelian group $\mathcal{Q}$ of rational partitions under Schneider's partition multiplication operation \cite{RS}.  We construct an explicit isomorphism here.

\begin{theorem}
The group $\left((\mathcal{P}\times\mathcal{P})^-,\cup^2\right)$ is isomorphic to the group $(\mathcal{Q},\cdot)$, where $\cdot$ denotes Schneider's partition multiplication.
\end{theorem}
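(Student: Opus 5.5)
We define the map $\Phi:(\mathcal{P}\times\mathcal{P})^-\to\mathcal{Q}$ by sending each white part $a_w$ to the ordinary part $a$ and each gray part $a_g$ to the antipart $a^-$, then listing the results in weakly decreasing order of size as in Schneider's mixed-partition notation. So if the white sub-partition of $\lambda$ is $\alpha$ and the gray sub-partition is $\beta$, then $\Phi(\lambda)=\alpha\beta^-=\alpha/\beta$.

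\emph{Well-definedness.} Since $\lambda$ is reduced, $\alpha$ and $\beta$ share no part sizes. Hence $\alpha\beta^-$ contains no part $a$ together with its antipart $a^-$, and it is already a fully annihilated (canonical) element of $\mathcal{Q}$.

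\emph{Bijectivity.} Every element of $\mathcal{Q}$, after all pairwise annihilations, has a unique canonical form $\alpha/\beta$ with $\alpha,\beta\in\mathcal{P}$ sharing no part sizes. The inverse map $\Psi(\alpha/\beta)$ colors the parts of $\alpha$ white and the parts of $\beta$ gray. This gives a reduced two-color partition, and $\Psi\circ\Phi$ and $\Phi\circ\Psi$ are both the identity.

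\emph{Homomorphism property.} The cleanest route is to encode everything by signed multiplicities. For $\lambda\in(\mathcal{P}\times\mathcal{P})^-$ and each positive integer $a$, set $\mu_\lambda(a)=m_\lambda(a_w)-m_\lambda(a_g)$. Reducedness means at most one of the two terms is nonzero, so $\lambda$ is determined by the finitely supported function $\mu_\lambda:\mathbb{N}\to\mathbb{Z}$. Concatenation adds the raw white and gray counts, and reduction removes $\min(m(a_w),m(a_g))$ pairs of each size, which leaves the difference unchanged. Therefore
\begin{equation}
\mu_{\lambda\cup^2\pi}(a)=\mu_\lambda(a)+\mu_\pi(a)\quad\text{for all }a.
\end{equation}
In the same way, an element of $\mathcal{Q}$ is determined by its function $\nu(a)=m(a)-m(a^-)$. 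Schneider's multiplication concatenates parts and annihilates $a$ with $a^-$, so $\nu$ is also additive. By construction $\nu_{\Phi(\lambda)}=\mu_\lambda$, which gives
\begin{equation}
\Phi(\lambda\cup^2\pi)=\Phi(\lambda)\cdot\Phi(\pi).
\end{equation}
This argument also shows that both groups are isomorphic to $\bigoplus_{a\ge1}\mathbb{Z}$.

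The main obstacle is making sure that reduction order does not matter, that is, that the canonical form in $\mathcal{Q}$ is unique. The signed-multiplicity description handles this: the function is invariant under removing any annihilating pair, and each function in $\bigoplus_{a\ge1}\mathbb{Z}$ corresponds to exactly one reduced object on each side. This replaces the case analysis used in the proof of Theorem \ref{2_colors}.
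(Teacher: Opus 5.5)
Your proposal is correct and follows the same route as the paper. The paper uses exactly this map (white parts to parts, gray parts to antiparts) and simply asserts bijectivity and the homomorphism property because $\cup^2$ and Schneider's product are ``defined identically''; your signed-multiplicity functions $\mu_\lambda$ and $\nu$ supply that missing justification, in particular that reduced forms are unique regardless of the order of annihilations, and as a bonus they identify both groups with $\bigoplus_{a\ge1}\mathbb{Z}$.
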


\begin{proof}
Define the map $\phi:(\mathcal{P}\times\mathcal{P})^-\to\mathcal{Q}$ by $\phi(\lambda)=\lambda'$ for all $\lambda\in(\mathcal{P}\times\mathcal{P})^-$, where $\lambda'$ is the partition obtained from $\lambda$ by writing each white part $\lambda_w$ of $\lambda$ as the positive integer $\lambda_w$ and each gray part $\lambda_g$ of $\lambda$ as the negative integer $-\lambda_g$.  It is clear that for a partition $\lambda\in(\mathcal{P}\times\mathcal{P})^-$, the resulting partition $\lambda'$ is an element of $\mathcal{Q}$, with reduction working the same between parts and antiparts in $\lambda'$ as between white and gray parts in $\lambda$.  It is straightforward to show that $\phi$ is a bijection, and $\phi$ is a homomorphism because $\cup^2$ and $\cdot$ are defined identically and are therefore compatible with each other.
\end{proof}

%%%%%%%%%%%%%%%%%%%%%%%%%%%%%%%%%%%%%%%

\section{Group Structures on Reduced Partitions}\label{section_groups}

In this section, we achieve group structures on the set $\mathcal{P}_{\cup_k}$ of partitions into parts with multiplicity less than $k$, the set $\mathcal{P}_k^+$ of partitions where the difference between any two consecutive parts is less than $k$ and the smallest part is less than $k$, and the set $\mathcal{P}_k^{(0)}$ of partitions with no parts divisible by $k$ where the difference between any two consecutive parts is less than $k$.  The operations required for these group structures each rely on a type of reduction modulo a positive integer $k$.

\subsection{Group Structure on $\mathcal{P}_{\cup_k}$}

We define the reduction required to obtain a group structure on partitions under concatenation.

\begin{definition}
For any positive integer $k$, we define the unary operation \emph{multiplicity reduction modulo $k$} on partitions in $\mathcal{P}$ by removing any $k$ parts of the same size.  We call a partition $\lambda\in\mathcal{P}$ \emph{multiplicity-reduced modulo $k$} if each part of $\lambda$ has multiplicity less than $k$.
\end{definition}

The set $\mathcal{P}_{\cup_k}$ can be viewed as the set of all partitions in $\mathcal{P}$ which are multiplicity-reduced modulo $k$.

Figure \ref{reductionModNExample} shows an illustration of multiplicity reduction modulo $k$. 
    \begin{figure}[h]
        \centering
        \begin{tikzpicture}[scale=0.6,step=1cm]
    \draw (0,0) grid (2,-2);
    \draw (3,0) grid (4,-2);
    \draw (0,-3) grid (2,-4);
    \draw (3,-3) grid (4,-4);
    \node at (2.5,-0.5) {\,$\cdots$};
    \node at (1,-2.3) {$\vdots$};
    \node at (2.5,-2.3) {$\ddots$};
    \node at (5,-2) {$=$};
    \node at (6,-2) {$\emptyset$};
    \node (A) at (0,0) {};
    \node (B) at (0,-4) {};
    \draw[decoration={brace, raise=10pt, amplitude=5pt}, decorate] 
        (B.south west) -- node[left=15pt, align=right] {$k$ rows} (A.north west);
    \end{tikzpicture}
            \caption{Multiplicity reduction modulo $k$}
        \label{reductionModNExample}
    \end{figure}

\begin{definition}
    For any positive integer $k$, we define the binary operation \textit{concatenation and reduction modulo $k$}, denoted $\cup_k$, on $\mathcal{P}$ as follows: for any two partitions $\lambda,\pi\in\mathcal{P}$, define $\lambda\cup_k\pi$ by first concatenating the parts of $\lambda$ and $\pi$, then reordering to yield a partition in weakly decreasing order, then performing multiplicity reduction modulo $k$ on the result. 
    \end{definition}

\begin{example}
For $k=4$, we have $(4, 3, 3, 1)\cup_4(3, 3, 3, 2) = (4, 3, 2, 1)$. Figure \ref{reductionExampleMod4} shows the Young diagrams involved in this example of concatenation and reduction modulo 4. 
    \begin{figure}[h]
        \centering
        \begin{tikzpicture}[scale=0.6,step=1cm]
    \draw (0,0) grid (3,-3);
    \draw (3,0) grid (4,-1);
    \draw (0,-3) grid (1,-4);
    \node at (5,-2) {$\cup_4$};
    \draw (6,0) grid (9,-3);
    \draw (6,-3) grid (8,-4);
    \node at (10,-2) {$=$};
    \draw (11,0) grid (15,-1);
    \draw (11,-1) grid (14,-2);
    \draw (11,-2) grid (13,-3);
    \draw (11,-3) grid (12,-4);
    \end{tikzpicture}
        \caption{Concatenation and reduction modulo $4$: $(4, 3, 3, 1)\cup_4(3, 3, 3, 2) = (4, 3, 2, 1)$}
        \label{reductionExampleMod4}
    \end{figure}
\end{example}

We now prove Theorem \ref{group_structures}, Part 1, to establish an abelian group structure on $\mathcal{P}_{\cup_k}$.

\begin{proof}[Proof of Theorem \ref{group_structures}, Part 1]
    It is clear that $\mathcal{P}_{\cup_k}$ is closed under the operation of concatenation and reduction modulo $k$, $\cup_k$, since reordering and performing multiplicity reduction on the result of concatenation yields a partition with each part occurring less than $k$ times. Let $\lambda = (\lambda_1, \dots, \lambda_\ell)$, $\pi = (\pi_1, \dots, \pi_i)$, and $\tau = (\tau_1, \dots, \tau_t)$ be partitions in $\mathcal{P}_{\cup_k}$. 
    
    The empty partition $\emptyset$ is an element of $\mathcal{P}_{\cup_k}$ and acts as the identity element, because
    \begin{align}
        \lambda \cup_k \emptyset &=(\lambda_1,\lambda_2,\dots,\lambda_\ell) \cup_k \emptyset= (\lambda_1,\lambda_2,\dots,\lambda_\ell)= \lambda;
        \\\emptyset \cup_k \lambda &= \emptyset \cup_k (\lambda_1,\lambda_2,\dots,\lambda_\ell)=(\lambda_1,\lambda_2,\dots,\lambda_\ell) = \lambda.
    \end{align}
    
    Let $\lambda^*$ be the set of distinct parts of $\lambda$, and let $r$ be the cardinality of $\lambda^*$. For each distinct part $\lambda_a\in \lambda^*$ with $m_\lambda(\lambda_a)$ parts in $\lambda$, we note that $1\leq m_\lambda(\lambda_a)<k$ in order for $\lambda$ to be multiplicity reduced modulo $k$.  Consider, for each distinct part $\lambda_a\in\lambda^*$, the partition $\gamma_a := \underbrace{(\lambda_a, \dots, \lambda_a)}_{k-m_\lambda(\lambda_a)\text{ parts}}$. Concatenate every such partition $\gamma_a$ and reorder to get the new partition $\gamma := (\underbrace{\lambda_1, \dots, \lambda_1}_{k-m_\lambda(\lambda_1)\text{ parts}}, \dots, \underbrace{\lambda_r, \dots, \lambda_r}_{k-m_\lambda(\lambda_r)\text{ parts}})$.  Observe that $\gamma\in\mathcal{P}_{\cup_k}$, since $1\leq k-m_\lambda(\lambda_a)<k$ for each distinct part $\lambda_a\in\lambda^*$.  Performing concatenation and reduction modulo $k$ on $\lambda$ and $\gamma$, we have
    \begin{align}
        \lambda \cup_k \gamma &= (\lambda_1,\dots,\lambda_\ell) \cup_k (\underbrace{\lambda_1, \dots, \lambda_1}_{k-m_\lambda(\lambda_1)\text{ parts}}, \dots, \underbrace{\lambda_r, \dots, \lambda_r}_{k-m_\lambda(\lambda_r)\text{ parts}})
        \\&=(\underbrace{\lambda_1, \dots, \lambda_1}_{m_\lambda(\lambda_1)\text{ parts}}, \dots, \underbrace{\lambda_r, \dots, \lambda_r}_{m_\lambda(\lambda_r)\text{ parts}})\cup_k(\underbrace{\lambda_1, \dots, \lambda_1}_{k-m_\lambda(\lambda_1)\text{ parts}}, \dots, \underbrace{\lambda_r, \dots, \lambda_r}_{k-m_\lambda(\lambda_r)\text{ parts}})
        \\&=(\underbrace{\lambda_1, \dots, \lambda_1}_{k \text{ parts}}, \dots, \underbrace{\lambda_r, \dots, \lambda_r}_{k \text{ parts}})
        \\&= \emptyset, 
    \end{align}
    and similarly $\gamma \cup_k \lambda=\emptyset$.  Thus, for each partition $\lambda\in\mathcal{P}_{\cup_k}$, the inverse partition of $\lambda$, denoted by $\gamma$ above, is also an element of $\mathcal{P}_{\cup_k}$.
    
    Next, we want to show that $(\lambda \cup_k \pi) \cup_k \tau = \lambda \cup_k (\pi \cup_k \tau)$. On both sides of the equation, we get the concatenation of all the parts of $\lambda$, $\pi$, and $\tau$, but multiplicity reduction modulo $k$ is performed in a different order on each side.  Thus, it suffices to show that multiplicity reduction modulo $k$ results in reducing the exact same parts on both sides of the equation.  We show this here.
    
    For any part size $\alpha$ in $\lambda$, $\pi$, or $\tau$, we assume that $d_1$ sets of $k$ parts of size $\alpha$ reduce while performing the operations $(\lambda \cup_k \pi) \cup_k \tau$ on the left side, and we assume that $d_2$ sets of $k$ parts of size $\alpha$ reduce while performing the operations $\lambda\cup_k\left(\pi\cup_k\tau\right)$ on the right side. We will show that $d_1=d_2$.  Suppose $m_\lambda(\alpha)=a$, $m_\pi(\alpha)=b$, and $m_\tau(\alpha)=c$. Note that $0\leq a, b, c <k$, and at least one of $a$, $b$, and $c$ is nonzero. When we concatenate $\lambda$ with $\pi$ and reduce, we get $x_1=a+b-f_1k$ parts of size $\alpha$ in $\lambda\cup_k\pi$, where $f_1$ is the number of sets of $k$ parts that reduce. When we then concatenate $\lambda\cup_k\pi$ with $\tau$, we get $y_1=x_1+c-g_1k$ parts of size $\alpha$ in $(\lambda\cup_k\pi)\cup_k\tau$, where $g_1$ is the number of sets of $k$ parts that reduce. Therefore, the number of parts of size $\alpha$ left after both operations is $y_1=(a+b-f_1k)+c-g_1k= a+b+c-(f_1+g_1)k$. Since a total of $d_1$ sets reduce by assumption, we get that $f_1+g_1=d_1$. Thus, $a+b+c\geq d_1k$ and $a+b+c \equiv y_1 \pmod k$ with $0\leq y_1<k$.
    
    On the right side of the equation, we count similarly. When we concatenate $\pi$ with $\tau$, we get $x_2 = b+c- f_2k$ parts of size $\alpha$, where $f_2$ is the number of sets of $k$ parts that reduce. Then, when we concatenate $\lambda$ with $\pi\cup_k\tau$, we get $y_2 = a+x_2 - g_2k$ parts of size $\alpha$, where $g_2$ is the number of sets of $k$ parts that reduce. Therefore, the number of parts of size $\alpha$ left after both operations is $y_2 = a + (b + c -f_2k)-g_2k=a+b+c-(f_2+g_2)k$. Thus, $a+b+c\geq d_2k$ and $a+b+c\equiv y_2\pmod{k}$ with $0\leq y_2<k$.
    
    Combining the two congruences from both sides of the equation, we see that $y_2\equiv y_1\pmod{k}$; since $0\leq y_1,y_2<k$, we must have that $y_1=y_2$. We have now proved that for any part size $\alpha$ in $\lambda$, $\pi$, or $\tau$, the partitions $\left(\lambda\cup_k\pi\right)\cup_k\tau$ and $\lambda\cup_k\left(\pi\cup_k\tau\right)$ contain the same number of parts of size $\alpha$. Since $\alpha$ is an arbitrary part size, we have completed the proof that the operation of concatenation and reduction modulo $k$, $\cup_k$, is associative. 
    
    It is clear that the operation $\cup_k$ is also commutative, because concatenating $\lambda\cup_k\pi$ or $\pi\cup_k\lambda$, in either order, is followed by reordering; the reordering step results in the exact same set of parts, which then results in the exact same partition after multiplicity reduction modulo $k$.
    
    Thus, the set $\mathcal{P}_{\cup_k}$ is an abelian group under concatenation and reduction modulo $k$. 
\end{proof}

Next, we identify a family of subgroups of $\mathcal{P}_{\cup_k}$.

\begin{theorem}
    Fix a positive integer $k$ and a positive divisor $r$ of $k$, and let $\alpha$ be any positive integer.  The set $A_r(\alpha):=\left\{\lambda\in\mathcal{P}_{\cup_k}:r\mid m_\lambda(\alpha)\right\}$ forms a subgroup of $\left(\mathcal{P}_{\cup_k}, \cup_k\right)$. 
\end{theorem}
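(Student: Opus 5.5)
We will use the one-step subgroup test in the abelian group $\left(\mathcal{P}_{\cup_k},\cup_k\right)$ established in Theorem \ref{group_structures}, Part 1: we show that $A_r(\alpha)$ is nonempty and that $\lambda\cup_k\gamma^{-1}\in A_r(\alpha)$ whenever $\lambda,\gamma\in A_r(\alpha)$. Equivalently, we check closure under $\cup_k$ and under inverses separately. Nonemptiness is immediate, since $m_\emptyset(\alpha)=0$ and $r\mid 0$, so $\emptyset\in A_r(\alpha)$.

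The key observation, extracted from the associativity argument in the proof of Theorem \ref{group_structures}, Part 1, is a formula for multiplicities. For any $\lambda,\pi\in\mathcal{P}_{\cup_k}$ and any part size $\alpha$,
\[
m_{\lambda\cup_k\pi}(\alpha)=m_\lambda(\alpha)+m_\pi(\alpha)-fk,
\]
where $f\in\{0,1\}$. This holds because both multiplicities are less than $k$, so at most one set of $k$ parts of size $\alpha$ reduces. Since $r\mid k$, the result is
\[
m_{\lambda\cup_k\pi}(\alpha)\equiv m_\lambda(\alpha)+m_\pi(\alpha)\pmod r .
\]
So if $r\mid m_\lambda(\alpha)$ and $r\mid m_\pi(\alpha)$, then $r\mid m_{\lambda\cup_k\pi}(\alpha)$, which gives closure.

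For inverses, we use the explicit inverse $\gamma$ constructed in that proof. If $\alpha$ is not a part of $\lambda$, then $\alpha$ is not a part of $\gamma$, so $m_\gamma(\alpha)=0$. If $\alpha$ is a part of $\lambda$, then $m_\gamma(\alpha)=k-m_\lambda(\alpha)$, and $r$ divides this because $r\mid k$ and $r\mid m_\lambda(\alpha)$. In both cases $\gamma\in A_r(\alpha)$. Associativity, the identity, and commutativity are inherited from $\mathcal{P}_{\cup_k}$, so $A_r(\alpha)$ is a subgroup.

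The only point needing care is the multiplicity formula modulo $k$, specifically that reduction removes exactly a multiple of $k$ parts of size $\alpha$. This is exactly where the hypothesis $r\mid k$ is used. Both the inverse computation and closure rely on it, and everything else is routine.
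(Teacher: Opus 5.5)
Your proof is correct and follows the paper's argument: $\emptyset\in A_r(\alpha)$ because $r\mid 0$; closure holds because $m_{\lambda\cup_k\pi}(\alpha)=m_\lambda(\alpha)+m_\pi(\alpha)-jk$ and $r\mid k$; and inverses stay in $A_r(\alpha)$ because $m_\gamma(\alpha)=k-m_\lambda(\alpha)$. You also treat the case $m_\lambda(\alpha)=0$ separately, where the inverse has no parts of size $\alpha$ rather than the $k$ parts that the paper's formula $k-rs$ would literally give, which is a small but genuine improvement on the paper's write-up.
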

\begin{proof}
    Let $\lambda,\pi\in A_r(\alpha)$ with $m_\lambda(\alpha)=rs$ and $m_\pi(\alpha)=rt$, for some non-negative integers $s,t$.  When we perform concatenation and reduction modulo $k$ on $\lambda$ and $\pi$, we see that $m_{\lambda\cup_k\pi}(\alpha)=rs+rt-jk = r(s+t)-jk$, where $j$ is a non-negative integer defined so that $0\leq r(s+t)-jk<k$. Since $r\mid k$, we have that $r\mid(r(m+l)-jk)$. Thus, the set $A_r(\alpha)$ is closed under concatenation and reduction modulo $k$. Since the empty partition $\emptyset$ has zero parts of size $\alpha$ and $r\mid0$, we have that $\emptyset\in A_r(\alpha)$. We also have that the inverse partition $\lambda^{-1}$ satisfies $m_{\lambda^{-1}}(\alpha)=k-rs$.  Again, since $r\mid k$, we have that $r\mid(k-rs)$ as well, so the inverse of every partition in $A_r(\alpha)$ is also contained in $A_r(\alpha)$.  This completes the proof that $A_r(\alpha)$ is a subgroup of $\left(\mathcal{P}_{\cup_k},\cup_k\right)$.
\end{proof}

\begin{remark}
We make the following interesting observations regarding the subgroups $A_r(\alpha)$.
\begin{enumerate}
\item For a fixed part $\alpha$ contained in some partition in $\mathcal{P}_{\cup_k}$, the number of subgroups of the form $A_r(\alpha)$ in $\mathcal{P}_{\cup_k}$ is equal to the divisor function $d(k)$, the number of divisors of $k$.
\item For a fixed part $\alpha$ and a fixed divisor $r\mid k$, the subgroup $A_r(\alpha)$ is the set of all partitions into parts with multiplicity less than $k$, where any part of size $\alpha$ occurs a multiple of $r$ times.
\end{enumerate}
\end{remark}

\subsection{Group Structure on $\mathcal{P}_k^+$}

We approach reduction for the operation of component-wise addition differently, by viewing $\mathcal{P}$ as a set of equivalence classes of partitions defined by the congruence classes modulo $k$ of each part. We first show that congruence modulo $k$ in each component, denoted $\sim_k$, is an equivalence relation, and then we prove that the set $\mathcal{P}/_{\sim_k}$ of equivalence classes of partitions is an abelian group.  Finally, we prove that the set $\mathcal{P}_k^+$ of component-wise reduced modulo $k$ partitions is isomorphic to $\mathcal{P}/_{\sim_k}$ to establish an abelian group structure on $\mathcal{P}_k^+$.

\begin{definition}
Let $k$ be a positive integer $k$, and let $\lambda,\pi\in\mathcal{P}$ such that $\lambda=\left(\lambda_1,\lambda_2,\dots,\lambda_\ell\right)$ and $\pi=\left(\pi_1,\pi_2,\dots,\pi_i\right)$ with $\ell\geq i$. We define the relation \textit{component-wise congruence modulo $k$}, denoted $\sim_k$, on the pair $(\lambda,\pi)$ as follows: we say that $\lambda\sim_k\pi$ if and only if $\lambda_j\equiv\pi_j\pmod{k}$ for all $1\leq j\leq i$ and $\lambda_j\equiv0\pmod{k}$ for all $i<j\leq\ell$.  If $\lambda\sim_k\pi$, then we call $\lambda$ and $\pi$ \textit{component-wise congruent modulo $k$}.
\end{definition}

Note that two partitions of different length can be component-wise congruent modulo $k$.  Next, we prove that this relation is, in fact, an equivalence relation on partitions.

\begin{theorem} \label{additionequivalence}
    For any integer $k$, component-wise congruence modulo $k$ is an equivalence relation on $\mathcal{P}$.
\end{theorem}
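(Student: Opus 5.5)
The cleanest route is to first recast the definition symmetrically, so that the length asymmetry in the definition of $\sim_k$ disappears. Using the paper's convention $\lambda_j:=0$ for $j>\ell(\lambda)$, I will show that
\[
\lambda\sim_k\pi \iff \lambda_j\equiv\pi_j\pmod{k}\ \text{for all } j\geq 1 .
\]
For the forward direction, suppose $\ell(\lambda)\geq\ell(\pi)$. For $j\leq\ell(\pi)$ the congruence is given. For $\ell(\pi)<j\leq\ell(\lambda)$ we have $\lambda_j\equiv 0=\pi_j$. For $j>\ell(\lambda)$ both sides are $0$. The converse is immediate by restricting to the indices in the definition.

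(The statement says "any integer $k$"; I will take $k$ to be a positive integer, as in the definition. Congruence modulo $k$ and modulo $-k$ coincide, and $k=0$ would just give equality, so nothing is lost.)

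With this reformulation, $\sim_k$ is the pullback of ordinary congruence modulo $k$ under the map
\[
\lambda\mapsto(\lambda_1,\lambda_2,\dots)\in\mathbb{Z}^{\mathbb{N}},
\]
applied coordinatewise. The three axioms then follow from the corresponding properties of congruence modulo $k$ on $\mathbb{Z}$.

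\textbf{Reflexivity.} $\lambda_j\equiv\lambda_j$ for every $j$.

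\textbf{Symmetry.} The reformulated condition is symmetric in $\lambda$ and $\pi$. This settles the issue that the original definition was phrased under the assumption $\ell\geq i$, so that "$\pi\sim_k\lambda$" must be read with the roles of the longer and shorter partition swapped.

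\textbf{Transitivity.} If $\lambda_j\equiv\pi_j$ and $\pi_j\equiv\tau_j\pmod{k}$ for all $j$, then $\lambda_j\equiv\tau_j\pmod{k}$ for all $j$.

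The main obstacle is the bookkeeping in the reformulation lemma, especially for transitivity when the lengths of $\lambda,\pi,\tau$ are ordered in any of the six possible ways. Done directly from the original definition, this would require a case split on the relative lengths. Padding every partition with zeros avoids the split entirely, so I will take care to justify the padded equivalence carefully, including the edge cases where one partition is $\emptyset$ or where the lengths are equal.
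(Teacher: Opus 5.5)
Your proof is correct, and it is cleaner than the paper's. The paper checks the three axioms directly from the asymmetric definition. It fixes the ordering $\ell(\lambda)\geq\ell(\pi)\geq\ell(\tau)$ ``without loss of generality'' and, for transitivity, combines the four congruence conditions coming from $\lambda\sim_k\pi$ and $\pi\sim_k\tau$. The zero-padding convention $\pi_j=0$ for $j>\ell(\pi)$ appears only implicitly, in the symmetry step.

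That ordering is not fully without loss of generality for transitivity, because the hypotheses single out the middle partition $\pi$. Symmetry only lets you swap $\lambda$ and $\tau$, so the cases where $\pi$ is the longest or the shortest of the three are left unchecked. They do go through by the same bookkeeping over index ranges.

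Your reformulation lemma says that $\lambda\sim_k\pi$ if and only if $\lambda_j\equiv\pi_j\pmod{k}$ for all $j\geq1$, after padding with zeros. This makes the padding explicit once and turns $\sim_k$ into the pullback of coordinatewise congruence on integer sequences. As a result, every length ordering, including $\emptyset$ and equal lengths, is handled at once. It also removes the ambiguity in reading the definition when $\ell(\lambda)<\ell(\pi)$.

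The paper's version stays closer to the literal definition. Yours is shorter and fully general, and it is the form the paper actually uses later when it checks that component-wise addition on $\mathcal{P}/_{\sim_k}$ is well defined, where it writes the congruences for all $j\geq1$. Your remark on $k\leq 0$ is a harmless addition.
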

\begin{proof}
    Let $\lambda =\left(\lambda_1, \lambda_2, \dots, \lambda_\ell\right)$, $\pi = \left(\pi_1, \pi_2, \dots, \pi_i\right)$, $\tau=\left(\tau_1,\tau_2,\dots,\tau_t\right)\in\mathcal{P}$, and suppose without loss of generality that $\ell\geq i\geq t$. The relation $\sim_k$ is reflexive, since $\lambda_j\equiv\lambda_j\pmod{k}$. The relation $\sim_k$ is symmetric, since if $\lambda_j\equiv\pi_j\pmod{k}$ for all $1\leq j\leq i$ and $\lambda_j\equiv0\pmod{k}$ for all $i<j\leq\ell$, then $\pi_j\equiv\lambda_j\pmod{k}$ for all $1\leq j\leq i$ and $\pi_j=0$ for all $i<j\leq\ell$. We have used the convention that $\pi_j=0$ for all $j>\ell(\pi)=i$. Finally, suppose that $\lambda\sim_k\pi$ and $\pi\sim_k\tau$.  Then we have that
    \begin{align*}
    &\lambda_j\equiv\pi_j\pmod{k}\text{ for all }1\leq j\leq i;\\
    &\lambda_j\equiv0\pmod{k}\text{ for all }i<j\leq\ell;\\
    &\pi_j\equiv\tau_j\pmod{k}\text{ for all }1\leq j\leq t;\\
    &\pi_j\equiv0\pmod{k}\text{ for all }t<j\leq i.
    \end{align*}
Considering all four conditions simultaneously, we have that the parts of $\lambda$ and $\tau$ satisfy
\begin{align*}
&\lambda_j\equiv\pi_j\equiv\tau_j\pmod{k}\text{ for all }1\leq j\leq t;\\
&\lambda_j\equiv0\pmod{k}\text{ for all }t<j\leq\ell.
\end{align*}
Then $\lambda\sim_k\tau$, and therefore $\sim_k$ is transitive.  Thus, we have shown that $\sim_k$ is an equivalence relation on $\mathcal{P}$.
\end{proof} 

Let $L(\gamma,\lambda):=\min\{\ell(\gamma),\ell(\lambda)\}$ for any $\lambda,\gamma\in\mathcal{P}$.  We denote the equivalence class of each partition $\lambda\in\mathcal{P}$ by
$$ [\lambda] = \left\{ \gamma\in\mathcal{P}:\gamma_j\equiv\lambda_j\hspace{-.25cm}\pmod{k}\text{ for }1\leq j\leq L(\gamma,\lambda);\,\gamma_j\equiv\lambda_j\equiv0\hspace{-.25cm}\pmod{k}\text{ for }j>L(\gamma,\lambda)\right\}.$$ We define the set $\mathcal{P}/_{\sim_k}:=\{[\lambda]:\lambda\in\mathcal{P}\}$.

\begin{example}
The equivalence class of the partition $\lambda=(6,1)$ modulo $k=4$ contains the minimal-size partition $(2,1)$ as well as the partitions $(6,5)$, $(6,5,4)$, and $(22,17,16,8,4,4)$.
\end{example}

\begin{definition}
We define the binary operation \textit{component-wise addition} on $\mathcal{P}/_{\sim_k}$ as follows.  Let $[\lambda]=[\left(\lambda_1,\lambda_2,\dots,\lambda_\ell\right)]$ and $[\pi]=[\left(\pi_1,\pi_2,\dots,\pi_i\right)]$ be equivalence classes of partitions in $\mathcal{P}_{\sim_k}$, and assume $\ell\geq i$. Then their component-wise sum is $$[\lambda]+[\pi]:=\left[\left(\lambda_1+\pi_1,\lambda_2+\pi_2,\dots,\lambda_i+\pi_i,\lambda_{i+1},\dots,\lambda_\ell\right)\right].$$
\end{definition}

In other words, the component-wise sum of two equivalence classes $[\lambda],[\pi]$ of partitions is defined as the equivalence class of the partition which is the result of component-wise adding the respective parts of the two representative partitions $\lambda,\pi$.  We show here that component-wise addition is a well-defined operation on pairs of partition equivalence classes in $\mathcal{P}_{\sim_k}$. Recall that we use the convention that for any partition $\mu=\left(\mu_1,\mu_2,\dots,\mu_m\right)\in\mathcal{P}$, we set $\mu_j=0$ for all $j>m$. Let $\lambda=\left(\lambda_1,\lambda_2,\dots,\lambda_\ell\right),\pi=\left(\pi_1,\pi_2,\dots,\pi_\ell\right)\in\mathcal{P}$ with $\ell(\lambda)\geq\ell(\pi)$. Let $[\lambda],[\pi]\in\mathcal{P}_{\sim_k}$, and let $\gamma=\left(\gamma_1,\gamma_2,\dots,\gamma_\ell\right),\delta=\left(\delta_1,\delta_2,\dots,\delta_\ell\right)\in\mathcal{P}$ such that $\gamma\in[\lambda]$, $\delta\in[\pi]$; we assume without loss of generality that $\ell(\lambda)\geq\ell(\gamma),\ell(\delta)$ as well.  Then $\gamma$ and $\delta$ satisfy the congruences $\gamma_j\equiv\lambda_j\pmod{k}$ and $\delta_j\equiv\pi_j\pmod{k}$ for all $j\geq1$, so we have that
\begin{align*}
[\lambda]+[\pi]&=\left[\left(\lambda_1+\pi_1,\lambda_2+\pi_2,\dots,\lambda_\ell+\pi_\ell\right)\right]\\
&=\left[\left(\gamma_1+\delta_1,\gamma_2+\delta_2,\dots,\gamma_\ell+\delta_\ell\right)\right]\\
&=[\gamma]+[\delta].
\end{align*}

\begin{theorem}\label{group_eq_classes}
Let $k$ be a positive integer. The set $\mathcal{P}/_{\sim_k}$ forms an abelian group under component-wise addition.
\end{theorem}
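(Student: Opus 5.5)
The plan is to reduce every group axiom to the corresponding property of $\mathbb{Z}_k$, using the fact that an equivalence class $[\lambda]\in\mathcal{P}/_{\sim_k}$ is determined entirely by the sequence of residues $(\lambda_1 \bmod k,\lambda_2 \bmod k,\dots)$. Here we use the convention $\lambda_j=0$ for $j>\ell(\lambda)$. Under this convention two partitions are component-wise congruent exactly when their infinite residue sequences agree.

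The sum is well defined by the computation just preceding Theorem \ref{group_eq_classes}. Before using it, I will check that the representative $\left(\lambda_1+\pi_1,\dots,\lambda_i+\pi_i,\lambda_{i+1},\dots,\lambda_\ell\right)$ really is a partition. It is: component-wise sums of two weakly decreasing sequences are weakly decreasing, and padding the shorter one by zeros preserves this. So the operation is a closed binary operation on $\mathcal{P}/_{\sim_k}$.

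Each axiom then follows by computing component-wise with representatives.
\begin{itemize}
\item \textbf{Associativity and commutativity.} Both $([\lambda]+[\pi])+[\tau]$ and $[\lambda]+([\pi]+[\tau])$ are represented by the partition whose $j$th part is $\lambda_j+\pi_j+\tau_j$ (padding with zeros). Likewise $[\lambda]+[\pi]$ and $[\pi]+[\lambda]$ share the representative with parts $\lambda_j+\pi_j$. Both properties therefore follow from associativity and commutativity of integer addition.
\item \textbf{Identity.} The identity is $[\emptyset]$, since adding the all-zero sequence changes nothing. Note that $[\emptyset]$ is the class of all partitions whose parts are all divisible by $k$.
\end{itemize}

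The only step needing care is the inverse, because the naive inverse $(-\lambda_1,\dots,-\lambda_\ell)$ is not a partition. I will construct a genuine partition $\mu$ with $\mu_j\equiv-\lambda_j\pmod k$ for every $j$. To do so, set
$$\mu_j:=k(\lambda_1+\ell-j+1)-\lambda_j \qquad (1\le j\le\ell),$$
or more simply $\mu_j=kN_j-\lambda_j$ with the $N_j$ chosen large and weakly decreasing fast enough.

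We need $\mu$ weakly decreasing with positive parts. Positivity holds because $\mu_j\ge k\lambda_1-\lambda_j>0$ whenever $k\ge 2$; the case $k=1$ is trivial, since the group is then the single class $[\emptyset]$. For monotonicity,
$$\mu_j-\mu_{j+1}=k-(\lambda_j-\lambda_{j+1}).$$
This can be negative when consecutive parts differ by more than $k$. So I will instead choose $\mu_j=k\,(\lambda_1+\ell-j+1)\cdot c-\lambda_j$ with multiplier $c=\lambda_1$, or more cleanly take $\mu_j=k\,M_j-\lambda_j$ where $M_j-M_{j+1}\ge \lambda_1$, for example $M_j=\lambda_1(\ell-j+2)$. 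Then
$$\mu_j-\mu_{j+1}\ge k\lambda_1-\lambda_1\ge 0.$$

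With this choice, $\lambda+\mu$ has every part divisible by $k$, so $[\lambda]+[\mu]=[\emptyset]$, and commutativity gives the other side. This inverse construction, in particular ensuring the weakly decreasing condition, is the main obstacle. The rest of the argument is bookkeeping with the residue description.
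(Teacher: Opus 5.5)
Your proposal is correct and follows the same route as the paper. Like the paper, you add representatives component-wise, take $[\emptyset]$ as the identity, represent the inverse by a partition with parts $kM_j-\lambda_j$, and inherit associativity and commutativity from integer addition. Your explicit choice $M_j=\lambda_1(\ell-j+2)$, which gives $\mu_j-\mu_{j+1}\geq (k-1)\lambda_1\geq 0$, shows that such an inverse partition actually exists, a point the paper only asserts (``consider any partition $\gamma$ such that $\gamma_j:=r_jk-\lambda_j$''); in a final write-up you should delete the abandoned first attempts.
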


\begin{proof}
Let $k$ be a positive integer, and let $\lambda=\left(\lambda_1,\lambda_2,\dots,\lambda_\ell\right)$, $\pi=\left(\pi_1,\pi_2,\dots,\pi_i\right)$, and $\tau=\left(\tau_1,\tau_2,\dots,\tau_t\right)$ be partitions in $\mathcal{P}$ with $\ell\geq i\geq t$.  It is clear by definition that $\mathcal{P}/_{\sim_k}$ is closed under the operation of component-wise addition.

The equivalence class $[\emptyset]\in\mathcal{P}/_{\sim_k}$ is the identity element, since $[\lambda]+[\emptyset]=[\emptyset]+[\lambda]=[\lambda]$. Next, consider any partition $\gamma\in\mathcal{P}$ such that $\gamma_j:=r_jk-\lambda_j$ for some positive integer $r_j$. The equivalence class $[\gamma]\in\mathcal{P}/_{\sim_k}$ is the inverse element of $[\lambda]$, since
\begin{align*}
[\lambda]+[\gamma]&=\left[\left(\lambda_1+\left(r_1k-\lambda_1\right),\lambda_2+\left(r_2-\lambda_2\right),\dots,\lambda_\ell-\left(r_\ell k-\lambda_\ell\right)\right)\right]\\
&=\left[\left(r_1k,r_2k,\dots,r_\ell k\right)\right]\\
&=[\emptyset].
\end{align*}
Now, it is straightforward to show that component-wise addition is both associative and commutative, since this operation does not depend on the choice of equivalence class representative and simply consists of integer addition in each component of the representative partitions.  We briefly show associativity and commutativity here: we have that
\begin{align*}
([\lambda]+[\pi])+[\tau]&=\left[\left(\lambda_1+\pi_1,\dots,\lambda_\ell+\pi_\ell\right)\right]+[\tau]\\
&=\left[\left(\lambda_1+\pi_1+\tau_1,\dots,\lambda_\ell+\pi_\ell+\tau_\ell\right)\right]\\
&=[\lambda]+\left[\left(\pi_1+\tau_1,\dots,\pi_\ell+\tau_\ell\right)\right]\\
&=[\lambda]+([\pi]+[\tau])
\end{align*}
and
\begin{align*}
[\lambda]+[\pi]&=\left[\left(\lambda_1+\pi_1,\dots,\lambda_\ell+\pi_\ell\right)\right]\\
&=\left[\left(\pi_1+\lambda_1,\dots,\pi_\ell+\lambda_\ell\right)\right]\\
&=[\pi]+[\lambda].
\end{align*}
Thus $\mathcal{P}/_{\sim_k}$ is an abelian group under component-wise addition.
\end{proof}

We now define the operation of component-wise addition on $\mathcal{P}$, but we must incorporate a type of reduction to obtain an abelian group structure on partitions under this operation. We will show in the proof of Theorem \ref{group_structures}, Part 2, that the reduction we use is equivalent to considering only the partition of minimal size in each equivalence class of $\mathcal{P}/_{\sim_k}$.

\begin{definition}\label{componentwise_reduction_defn}
For any positive integer $k$, we define the unary operation \textit{component-wise reduction modulo $k$} on any partition $\lambda\in\mathcal{P}$ as follows.  First, replace the part $\lambda_{\ell(\lambda)}$ with the smallest non-negative integer $\lambda_{\ell(\lambda)}'$ which is congruent to $\lambda_{\ell(\lambda)}$ modulo $k$; namely, we have that $\lambda_{\ell(\lambda)}':=\lambda_{\ell(\lambda)}-g_0k$, where $g_0:=\lfloor\lambda_{\ell(\lambda)}/k\rfloor$.  Then, for each $1\leq j<\ell(\lambda)$, replace the part $\lambda_j$ by the positive integer $\lambda_j':=\lambda_j-g_jk$, where $g_j$ is defined backward recursively by $$g_j:=\left\lfloor\frac{\lambda_j-\lambda_{j+1}'}{k}\right\rfloor.$$  The new partition $\lambda':=\left(\lambda_1',\lambda_2',\dots,\lambda_\ell'\right)$ is the component-wise reduction modulo $k$ of $\lambda$. We call a partition $\lambda\in\mathcal{P}$ \textit{component-wise reduced modulo $k$} if the difference between any two consecutive parts of $\lambda$ is less than $k$ and the smallest part $\lambda_{\ell(\lambda)}$ is less than $k$.
\end{definition}

Note that component-wise reduction modulo $k$ reduces a partition $\lambda\in\mathcal{P}$ to the partition $\lambda'$ of minimal size such that each part of $\lambda'$ is congruent modulo $k$ to the corresponding part of $\lambda$. This minimal size is achieved by subtracting the largest possible multiple of $k$ from each part so that it remains larger than the next part, and therefore $\lambda'$ remains a partition.  In addition, if the $s$ smallest parts of $\lambda$ are all divisible by $k$, for $0\leq s\leq\ell(\lambda)$, then component-wise reduction modulo $k$ on $\lambda$ results in all of the $s$ smallest parts being replaced by 0, and these parts will be then omitted from $\lambda'$.  As a result, when a partition is component-wise reduced modulo $k$, its length decreases by the number of its smallest parts which are all simultaneously divisible by $k$.

\begin{example}
Let $k=4$ and $\lambda=(24,18,13,9,7,2)\in\mathcal{P}$.  We perform component-wise reduction modulo $4$ on $\lambda$ as follows:
\begin{align*}
(24,18,13,9,7,2)&\to(24,18,13,9,7,2-0\cdot4)\\
&\to(24,18,13,9,7-1\cdot4,2)\\
&\to(24,18,13,9-1\cdot4,3,2)\\
&\to(24,18,13-2\cdot4,5,3,2)\\
&\to(24,18-3\cdot4,5,5,3,2)\\
&\to(24-4\cdot4,6,5,5,3,2)=(8,6,5,5,3,2).
\end{align*}
Observe that the sequence $g_j$, $0\leq j<\ell(\lambda)$, in this example is given by
\begin{align*}
g_0&=\left\lfloor\frac{2}{4}\right\rfloor=0,\\
g_1&=\left\lfloor\frac{7-2}{4}\right\rfloor=1,\\
g_2&=\left\lfloor\frac{9-3}{4}\right\rfloor=1,\\
g_3&=\left\lfloor\frac{13-5}{4}\right\rfloor=2,\\
g_4&=\left\lfloor\frac{18-5}{4}\right\rfloor=3,\\
g_5&=\left\lfloor\frac{24-6}{4}\right\rfloor=4,
\end{align*}
and that the resulting partition $\lambda'=(8,6,5,5,3,2)\in\mathcal{P}_k^+$ is the partition of smallest size in $\mathcal{P}$ whose parts are congruent modulo $4$ to the corresponding parts of $\lambda=(24,18,13,9,7,2)$.
\end{example}

\begin{example}
Let $k=3$ and $\lambda=(8,7,6,3,3,3)\in\mathcal{P}$.  Component-wise reduction modulo $3$ on $\lambda$ yields $$(8,7,6,3,3,3)\to(2,1).$$ Note in particular that the reduced partition $\lambda'=(2,1)$ has length $\ell(\lambda')=\ell(\lambda)-4=2$, since the number of smallest parts of $\lambda$ which are simultaneously divisible by $3$ is $4$.
\end{example}

See Figure \ref{equivalenceClassExample} for a general diagram of component-wise reduction modulo $k$ on a family of length-4 partitions whose parts are congruent to $3,2,1,1$ (respectively) modulo $k$.

\begin{figure}[h]
    \centering
    \begin{tikzpicture}[scale=0.6,step=1cm]
    \usetikzlibrary{calc}
    \draw (0,0) grid (2,-5);
    \node at (3,-1) {$\dots$};
    \draw (4,0) grid (5,-5);
    \draw (5,0) grid (6,-4);
    \draw (6,0) grid (7,-2);
    \node at (8,-1) {$\dots$};
    \draw (9,0) grid (12,-2);
    \draw (12,0) grid (13,-1);
    \node at (14,-1) {$\to$};
    \draw (15,0) grid (16,-4);
    \draw (16,0) grid (17,-2);
    \draw (17,0) grid (18,-1);
    \node (A) at (0,0) {};
    \node (B) at (5,0) {};
    \node (C) at (10,0) {};
    \draw[decoration={brace, raise=5pt, amplitude=5pt}, decorate] 
        ($(A)+(0.05,0)$) -- ($(B)+(-0.05,0)$) node[midway, above=10pt] {$k$ columns};
   \draw[decoration={brace, raise=5pt, amplitude=5pt}, decorate]
        ($(B)+(0.05,0)$) -- ($(C)+(-0.05,0)$) node[midway, above=10pt] {$k$ columns};
    \end{tikzpicture}
    \caption{Component-wise reduction modulo $k$: $(2k +3, 2k + 2, k+1, k+1,k) \to (3, 2, 1, 1)$}
    \label{equivalenceClassExample}
\end{figure}

\begin{definition}
For any positive integer $k$, we define the binary operation \textit{component-wise addition and reduction modulo $k$}, denoted $+_k$, on $\mathcal{P}$ as follows. Let $\lambda=\left(\lambda_1,\lambda_2,\dots,\lambda_\ell\right)$ and $\pi=\left(\pi_1,\pi_2,\dots,\pi_i\right)$ be any two partitions in $\mathcal{P}$ with $\ell\geq i$.  Define $\lambda+_k\pi$ by first component-wise adding the parts of $\lambda$ and $\pi$: $\left(\lambda_1+\pi_1,\lambda_2+\pi_2,\dots,\lambda_i+\pi_i,\lambda_{i+1},\dots,\lambda_\ell\right)$, and then component-wise reducing the result modulo $k$.
\end{definition}

\begin{example}
For $k=4$, we have $(4, 3, 3, 1)+_4(3, 3, 2) = (3, 2, 1, 1)$. Figure \ref{additionExampleMod4} shows the Young diagrams involved in this example of component-wise addition and reduction. 
    \begin{figure}[h]
        \centering
        \begin{tikzpicture}[scale=0.6,step=1cm]
    \draw (0,0) grid (3,-3);
    \draw (3,0) grid (4,-1);
    \draw (0,-3) grid (1,-4);
    \node at (5,-1) {$+_4$};
    \draw (6,0) grid (9,-2);
    \draw (6,-2) grid (8,-3);
    \node at (10,-1) {$=$};
    \draw (11,0) grid (14,-1);
    \draw (11,-1) grid (13,-2);
    \draw (11,-2) grid (12,-4);
    \end{tikzpicture}
        \caption{Component-wise addition and reduction modulo $4$: $(4, 3, 3, 1)+_4(3, 3, 2) = (3,2,1,1)$}
        \label{additionExampleMod4}
    \end{figure}
    
    Note that the component-wise sum of $(4,3,3,1)$ and $(3,3,2)$ in $\mathcal{P}$ yields the partition $(7,6,5,1)$, which then component-wise reduces modulo $4$ to $(3,2,1,1)\in\mathcal{P}_4^+$.
\end{example}

Now, we can prove Theorem \ref{group_structures}, Part 2.

\begin{proof}[Proof of Theorem \ref{group_structures}, Part 2]
We prove that the set $\mathcal{P}_k^+$ with the operation component-wise addition and reduction modulo $k$ is an abelian group by proving that it is isomorphic to the abelian group $\mathcal{P}/_{\sim_k}$ under component-wise addition.

Let $\lambda\in\mathcal{P}$ be any partition. Define the map $\varphi$ on $\mathcal{P}/_{\sim_k}$ by $\varphi([\lambda]):=\gamma$, where $\gamma$ is the result of component-wise reduction modulo $k$ on $\lambda$.  Then $\gamma=\left(\gamma_1,\gamma_2,\dots,\gamma_\ell\right)$, where
\begin{align}
\gamma_\ell:=\lambda_\ell-\left\lfloor\frac{\lambda_\ell}{k}\right\rfloor k\label{gamma_ell_defn}
\end{align}
and, for each $1\leq j<\ell$, we define $\gamma_j$ backward recursively by
\begin{align}
\gamma_j:=\lambda_j-\left\lfloor\frac{\lambda_j-\gamma_{j+1}}{k}\right\rfloor k.\label{gamma_j_defn}
\end{align}
Let $\pi\in\mathcal{P}$ be any partition, and suppose that $\varphi([\pi])=\nu:=\left(\nu_1,\nu_2,\dots,\nu_\ell\right)$. If $\nu=\gamma$, then we have that
\begin{align}
\nu_\ell:=\pi_\ell-\left\lfloor\frac{\pi_\ell}{k}\right\rfloor k=\lambda_\ell-\left\lfloor\frac{\lambda_\ell}{k}\right\rfloor k=\gamma_\ell\label{ell}
\end{align}
and, for each $1\leq j<\ell$, we have
\begin{align}
\nu_j:=\pi_j-\left\lfloor\frac{\pi_j-\nu_{j+1}}{k}\right\rfloor k=\lambda_j-\left\lfloor\frac{\lambda_j-\gamma_{j+1}}{k}\right\rfloor k=\gamma_j.\label{other}
\end{align}
From \eqref{ell} and \eqref{other}, we see that if $\nu=\gamma$, then $\pi_j\equiv\lambda_j\pmod{k}$ for all $1\leq j\leq\ell$, and therefore $\pi$ and $\lambda$ are component-wise congruent modulo $k$.  Thus, we have shown that $\varphi([\lambda])=\varphi([\pi])$ implies $[\pi]=[\lambda]$, and so $\varphi$ is one-to-one.

Now, observe that the image of each $[\lambda]\in\mathcal{P}/_{\sim_k}$ is a component-wise reduced modulo $k$ partition, so $\varphi$ maps to $\mathcal{P}_k^+$.  Let $\mu\in\mathcal{P}_k^+$.  Then we have that $\varphi([\mu])=\mu$, since $\mu$ is already component-wise reduced modulo $k$.  Thus, we have shown that $\varphi:\mathcal{P}/_{\sim_k}\to\mathcal{P}_k^+$ is onto.

Finally, we show that $\varphi([\lambda]+[\pi])=\varphi([\lambda])+_k\varphi([\pi])$ for any $[\lambda],[\pi]\in\mathcal{P}/_{\sim_k}$. On the left, we have that
\begin{align*}
\varphi([\lambda]+[\pi])&=\varphi\left(\left[\left(\lambda_1+\pi_1,\lambda_2+\pi_2,\dots,\lambda_\ell+\pi_\ell\right)\right]\right)=\rho:=\left(\rho_1,\rho_2,\dots,\rho_\ell\right),
\end{align*}
where $$\rho_\ell:=\lambda_\ell+\pi_\ell-\left\lfloor\frac{\lambda_\ell+\pi_\ell}{k}\right\rfloor k$$ and $$\rho_j:=\lambda_j+\pi_j-\left\lfloor\frac{\lambda_j+\pi_j-\rho_{j+1}}{k}\right\rfloor k$$ for each $1\leq j<\ell$. On the other hand, we have that $\varphi([\lambda])+_k\varphi([\pi])=\gamma+_k\nu$, where the parts of $\gamma$ are defined in \eqref{gamma_ell_defn} and \eqref{gamma_j_defn}, and the parts of $\nu$, although now no longer necessarily equal to the respective parts of $\gamma$, are defined on the left sides of \eqref{ell} and \eqref{other}.  Then we have that $\varphi([\lambda])+_k\varphi([\pi])=\zeta:=\left(\zeta_1,\zeta_2,\dots,\zeta_\ell\right)$, where $$\zeta_\ell:=\gamma_\ell+\nu_\ell-\left\lfloor\frac{\gamma_\ell+\nu_\ell}{k}\right\rfloor k$$ and $$\zeta_j:=\gamma_j+\nu_j-\left\lfloor\frac{\gamma_j+\nu_j-\zeta_{j+1}}{k}\right\rfloor k$$ for each $1\leq j<\ell$. We see that $\rho_j=\zeta_j$ for all $1\leq j\leq\ell$, because both $\rho_j$ and $\zeta_j$ are component-wise reduced modulo $k$ and $\lambda_j+\pi_j\equiv\gamma_j+\nu_j\pmod{k}$. Thus, we have shown that $\rho=\zeta$, and therefore $\varphi$ is an isomorphism.
\end{proof}

In fact, for any positive integer $k$, the groups $\mathcal{P}_{\cup_k}$ and $\mathcal{P}_k^+$ are isomorphic by conjugation.

\begin{theorem}\label{isom_2}
Let $k$ be a positive integer. The group $\mathcal{P}_{\cup_k}$ is isomorphic to the group $\mathcal{P}_k^+$.
\end{theorem}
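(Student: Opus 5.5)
The isomorphism will be conjugation, $\lambda\mapsto\lambda^{c}$, restricted to $\mathcal{P}_k^+$. The idea is to identify both groups with the same group $\bigoplus_{j\geq1}\mathbb{Z}_k$ of finitely supported sequences with entries in $\mathbb{Z}_k$, under entrywise addition. Conjugation then becomes the identity map on these coordinates.

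First I check that conjugation is a bijection $\mathcal{P}_k^+\to\mathcal{P}_{\cup_k}$. For any $\lambda$, set $\lambda_{\ell+1}=0$. The standard fact about conjugates is that the multiplicity of the part $j$ in $\lambda^{c}$ is
\begin{equation*}
m_{\lambda^{c}}(j)=\lambda_j-\lambda_{j+1}\quad\text{for }1\leq j\leq\ell(\lambda).
\end{equation*}
So the two conditions defining $\mathcal{P}_k^+$ (consecutive differences less than $k$, and smallest part $\lambda_\ell-0$ less than $k$) say exactly that every part of $\lambda^{c}$ has multiplicity less than $k$. Since conjugation is an involution on $\mathcal{P}$, it maps $\mathcal{P}_k^+$ bijectively onto $\mathcal{P}_{\cup_k}$.

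Next, coordinates on each group. On $\mathcal{P}_{\cup_k}$, send $\mu$ to its multiplicity sequence $(m_\mu(1),m_\mu(2),\dots)\in\bigoplus\mathbb{Z}_k$. Concatenation adds multiplicities, and multiplicity reduction modulo $k$ reduces each one modulo $k$, as the associativity argument in Part 1 shows. Hence $\cup_k$ becomes entrywise addition in $\mathbb{Z}_k$, and this map is a group isomorphism. On $\mathcal{P}_k^+$, use the isomorphism $\varphi:\mathcal{P}/_{\sim_k}\to\mathcal{P}_k^+$ from the proof of Part 2. A class $[\lambda]$ is determined by its residue sequence $(\lambda_1 \bmod k,\lambda_2 \bmod k,\dots)$, which is finitely supported. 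Component-wise addition of classes is entrywise addition of residue sequences. So $\mathcal{P}_k^+\cong\mathcal{P}/_{\sim_k}\cong\bigoplus\mathbb{Z}_k$ via residues $r_j$.

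Now I compose with the difference map $D:(r_j)\mapsto(r_j-r_{j+1})_j$ on $\bigoplus\mathbb{Z}_k$. This map is additive. It is invertible via the finite sums $r_j=\sum_{i\geq j}d_i$, which make sense because the sequences have finite support. For $\lambda\in\mathcal{P}_k^+$ we have $0\leq\lambda_j-\lambda_{j+1}<k$, so $D$ applied to the residues of $\lambda$ gives exactly the actual differences $\lambda_j-\lambda_{j+1}$. By the first step, these are the multiplicities of $\lambda^{c}$. Therefore conjugation equals the composite of the isomorphisms $\mathcal{P}_k^+\to\bigoplus\mathbb{Z}_k$ (residues), $D$, and the inverse of the multiplicity coordinate map. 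Hence conjugation is a homomorphism: $(\lambda+_k\pi)^{c}=\lambda^{c}\cup_k\pi^{c}$.

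The main obstacle is the identification for $\mathcal{P}_k^+$. I need that $\lambda+_k\pi$ is the unique element of $\mathcal{P}_k^+$ whose residue sequence is the sum of the residue sequences of $\lambda$ and $\pi$. I also need that an element of $\mathcal{P}_k^+$ is recovered from its residues; the second fact requires taking the minimal nonnegative representative at each step of the backward recursion in Definition \ref{componentwise_reduction_defn}. I will take both from Theorem \ref{group_eq_classes} and the proof of Part 2. If a more self-contained route is wanted, I would instead verify directly that the recursion in Definition \ref{componentwise_reduction_defn} produces differences $\lambda'_j-\lambda'_{j+1}\equiv\lambda_j-\lambda_{j+1}\pmod k$ lying in $[0,k)$.
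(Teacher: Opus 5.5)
Your proposal is correct and is essentially the paper's argument: the isomorphism is conjugation, bijectivity rests on the fact that the multiplicities of the conjugate are the consecutive differences $\lambda_j-\lambda_{j+1}$ (with $\lambda_{\ell+1}=0$), and the homomorphism property comes from matching residues modulo $k$ together with the fact that a reduced partition is determined by its residues. Factoring conjugation through $\bigoplus_{j\geq1}\mathbb{Z}_k$ (residues, then your difference map $D$, then the inverse of the multiplicity map) is a cleaner bookkeeping of the paper's partial-sum computation $\theta(\lambda\cup_k\pi)=(m_1'+\cdots+m_r',\dots,m_r')=\theta(\lambda)+_k\theta(\pi)$, where $\theta$ is conjugation and $m_i'$ are the reduced multiplicities of $\lambda\cup_k\pi$; it also makes precise the paper's brief closing justification that all partitions involved are reduced.
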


\begin{proof}
Consider the map $\theta:\mathcal{P}\to\mathcal{P}$ given by conjugation, which is defined by reflecting the Young diagram of each partition across the main diagonal. It is well known that $\theta$ is a bijection from $\mathcal{P}$ to $\mathcal{P}$. For convenience, we denote the frequency notation of any partition $\lambda=\left(\lambda_1,\lambda_2,\dots,\lambda_\ell\right)\in\mathcal{P}$ by $\lambda=\left\langle 1^{m_\lambda(1)},2^{m_\lambda(2)},\dots,r^{m_\lambda(r)}\right\rangle$, where $r$ is the largest part of $\lambda$, and we recall that $m_\lambda(i)$ is the number of times $i$ occurs as a part in $\lambda$. Observe that the length $\ell$ of $\lambda$ satisfies $\ell=m_\lambda(1)+m_\lambda(2)+\cdots+m_\lambda(r)$. Then we have that
\begin{align*}
\theta(\lambda)=\left(m_\lambda(1)+m_\lambda(2)+\cdots+m_\lambda(r),m_\lambda(2)+\cdots+m_\lambda(r),\dots,m_\lambda(r)\right),
\end{align*}
and, for any partition $\pi=(\pi_1,\pi_2,\dots,\pi_s)\in\mathcal{P}$, we have
\begin{align*}
\theta^{-1}(\pi)=\left\langle 1^{\pi_1-\pi_2},2^{\pi_2-\pi_3},\dots,(s-1)^{\pi_{s-1}-\pi_s},s^{\pi_s}\right\rangle.
\end{align*}
We see that $\theta(\lambda)\in\mathcal{P}_k^+$ for each $\lambda\in\mathcal{P}_{\cup_k}$, since each multiplicity $m_\lambda(i)$ in $\lambda$ is less than $k$, and therefore each pair of consecutive parts in $\theta(\lambda)$ differs by at most $k-1$ and the smallest part of $\theta(\lambda)$ is $m_\lambda(r)<k$.

Now, we restrict $\theta$ to $\mathcal{P}_{\cup_k}$ and show that $\theta$ is a bijection from $\mathcal{P}_{\cup_k}$ to $\mathcal{P}_k^+$. Let $\lambda,\pi\in\mathcal{P}_{\cup_k}$ such that $\theta(\lambda)=\theta(\pi)$. Then $\lambda$ and $\pi$ are partitions into parts with multiplicity less than $k$, and each part in $\theta(\lambda)$ is congruent modulo $k$ to the corresponding part in $\theta(\pi)$. If $\theta(\lambda)=\left(\gamma_1,\gamma_2,\dots,\gamma_r\right)$, then $\theta(\pi)=\left(\gamma_1+c_1k,\gamma_2+c_2k,\dots,\gamma_r+c_r k\right)$ for $k_i\in\mathbb{Z}$, $1\leq i\leq r$. We see that
\begin{align*}
\lambda=\theta^{-1}(\theta(\lambda))=\left\langle1^{\gamma_1-\gamma_2},2^{\gamma_2-\gamma_3},\dots,r^{\gamma_r}\right\rangle
\end{align*}
and
\begin{align*}
\pi=\theta^{-1}(\theta(\pi))&=\left\langle1^{\gamma_1+c_1k-\gamma_2-c_2k},2^{\gamma_2+c_2k-\gamma_3-c_3k},\dots,r^{\gamma_r+c_r k}\right\rangle\\
&=\left\langle1^{\gamma_1-\gamma_2+(c_1-c_2)k},2^{\gamma_2-\gamma_3+(c_2-c_3)k},\dots,r^{\gamma_r+c_r k}\right\rangle.
\end{align*}
Thus, since the multiplicity of each part size differs between $\lambda$ and $\pi$ by multiples of $k$, and both $\lambda$ and $\pi$ are multiplicity-reduced modulo $k$, we see that $\lambda=\pi$ and $\theta$ is one-to-one. For $\pi=(\pi_1,\pi_2,\dots,\pi_s)\in\mathcal{P}_k^+$, we have that the difference $\pi_i-\pi_{i+1}$, $1\leq i<s$, between consecutive parts in $\pi$ is less than $k$ and the smallest part $\pi_s$ of $\pi$ is less than $k$. Then the partition $\delta=\left\langle 1^{\pi_1-\pi_2},2^{\pi_2-\pi_3},\dots,(s-1)^{\pi_{s-1}-\pi_s},s^{\pi_s}\right\rangle$ is in $\mathcal{P}_{\cup_k}$ and $\theta(\delta)=\pi$, so $\theta$ maps onto $\mathcal{P}_k^+$. We have shown that $\theta:\mathcal{P}_{\cup_k}\to\mathcal{P}_k^+$ is a bijection.

Finally, we show that $\theta:\mathcal{P}_{\cup_k}\to\mathcal{P}_k^+$ is a homomorphism. For the partitions $\lambda=\left\langle 1^{m_\lambda(1)},2^{m_\lambda(2)},\dots,r^{m_\lambda(r)}\right\rangle$ and $\pi=\left\langle 1^{m_\pi(1)},2^{m_\pi(2)},\dots,s^{m_\pi(s)}\right\rangle$ in $\mathcal{P}_{\cup_k}$, assuming without loss of generality that $r\geq s$, we have that
\begin{align*}
\lambda\cup_k\pi=\left\langle1^{m_1'},2^{m_2'},\dots,s^{m_s'},(s+1)^{m_{s+1}'},\dots,r^{m_r'}\right\rangle,
\end{align*}
where $m_i'\equiv m_\lambda(i)+m_\pi(i)\pmod{k}$ for all $1\leq i\leq s$, $m_i'\equiv m_\lambda(i)\pmod{k}$ for all $s+1\leq i\leq r$, and $m_i'<k$ for all $1\leq i\leq r$. Then
\begin{align*}
\theta(\lambda\cup_k\pi)&=(m_1'+m_2'+\cdots+m_r',m_2'+\cdots+m_r',\dots,m_r'),\\
&=\left(m_\lambda(1)+m_\lambda(2)+\cdots+m_\lambda(r),m_\lambda(2)+\cdots+m_\lambda(r),\dots,m_\lambda(r)\right)\\
&\qquad+_k\left(m_\pi(1)+m_\pi(2)+\cdots+m_\pi(s),m_\pi(2)+\cdots+m_\pi(s),\dots,m_\pi(s)\right)\\
&=\theta(\lambda)+_k\theta(\pi).
\end{align*}
We note that all partitions involved in this equation are reduced modulo $k$ by the definitions of $\mathcal{P}_{\cup_k}$, $\cup_k$, and $+_k$. Thus $\theta$ is an isomorphism.
\end{proof}

Theorem \ref{isom_2} shows that the set of partitions whose parts occur less than $k$ times, under the operation of concatenation and reduction modulo $k$, is isomorphic by conjugation to the set of partitions with smallest part less than $k$ in which consecutive parts differ by less than $k$, under the operation of component-wise addition and reduction modulo $k$.  In other words, concatenation and reduction modulo $k$ on multiplicity-reduced modulo $k$ partitions is the conjugate of component-wise addition and reduction modulo $k$ on component-wise reduced modulo $k$ partitions.

For any positive integer $k$, any fixed partition $\lambda\in\mathcal{P}_k^+$, and any fixed non-negative integer $m$, let $p_{\lambda\bmod k}(m)$ denote the number of partitions in $\mathcal{P}$ of size $|\lambda|+mk$ which component-wise reduce modulo $k$ to the same partition $\lambda$.  In other words, $p_{\lambda\bmod{k}}(m)$ counts the number of partitions in $\mathcal{P}$ whose parts are component-wise congruent modulo $k$ to the partition $\lambda\in\mathcal{P}_k^+$ (equivalently, the number of partitions in the equivalence class $[\lambda]\in\mathcal{P}/_{\sim_k}$) and whose size is $km$ larger than $|\lambda|$.  This function can be viewed as counting the number of ways a component-wise reduced modulo $k$ partition can have $m$ additions of size $k$ to any of its parts, where more than one addition of size $k$ is allowed in a single part when $m\geq2$. An interesting observation relates the size of the restricted partition function $p_{\lambda\bmod k}(m)$ for any $m\geq0$ and the size of the ordinary partition function $p(m)$, which counts the number of partitions in $\mathcal{P}$ of size $m$.  In particular, we observe the following relationship.

\begin{theorem}
For any positive integer $k$, any non-negative integer $m$, and any partition $\lambda\in\mathcal{P}_k^+$, we have that $p_{\lambda\bmod k}(m)=p(m)$.
\end{theorem}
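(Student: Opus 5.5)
The plan is to build an explicit bijection between the partitions counted by $p_{\lambda\bmod k}(m)$ and the partitions of $m$. The idea is to record, for each component, how many multiples of $k$ were added to $\lambda$. Fix $\lambda=(\lambda_1,\dots,\lambda_\ell)\in\mathcal{P}_k^+$, and use the convention $\lambda_j=0$ for $j>\ell$.

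\emph{Step 1: the length of $\mu$ is at least $\ell$.} Let $\mu\in[\lambda]$, so that $\mu$ component-wise reduces modulo $k$ to $\lambda$. Since $0<\lambda_\ell<k$, the last part $\lambda_\ell$ is not divisible by $k$. By the definition of $\sim_k$, the extra components of the longer partition must be $\equiv 0\pmod k$. So $\mu$ cannot be shorter than $\lambda$, and $\ell(\mu)\ge\ell$.

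\emph{Step 2: attach a sequence $(a_j)$ to $\mu$.} By Step 1 and the congruence conditions, we may write uniquely
$$\mu_j=\lambda_j+k a_j\quad(j\ge1),$$
where the $a_j$ are integers and only finitely many are nonzero. (For $j>\ell$ the part $\mu_j$ is a multiple of $k$, possibly $0$.) Then
$$|\mu|=|\lambda|+k\sum_j a_j,$$
so the size condition $|\mu|=|\lambda|+mk$ is exactly $\sum_j a_j=m$.

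\emph{Step 3: $\mu$ is a partition exactly when $(a_j)$ is weakly decreasing.} This is the main step, and it is where the defining property of $\mathcal{P}_k^+$ enters. Fix $j$. The inequality $\mu_j\ge\mu_{j+1}$ reads
$$k(a_j-a_{j+1})\ge-(\lambda_j-\lambda_{j+1}).$$
Since $0\le\lambda_j-\lambda_{j+1}\le k-1$ for every $j$, the right-hand side is strictly greater than $-k$. This holds for $j<\ell$ by the difference condition, for $j=\ell$ because $\lambda_\ell<k$, and for $j>\ell$ trivially. Hence the inequality is equivalent to $a_j-a_{j+1}>-1$, that is, $a_j\ge a_{j+1}$. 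So $(a_j)$ is weakly decreasing. Because it is eventually $0$, all $a_j\ge0$, and positivity of $\mu_j$ for $j\le\ell$ is automatic. Conversely, any weakly decreasing sequence of nonnegative integers $(a_j)$ with finite support produces, via the formula in Step 2, a genuine partition $\mu$ whose parts satisfy the congruences defining $[\lambda]$.

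\emph{Step 4: conclude the bijection.} Step 3 shows that $\mu\mapsto(a_1,a_2,\dots)$, with zeros deleted, is a bijection from the partitions in $[\lambda]$ of size $|\lambda|+mk$ onto the partitions of $m$. The inverse is $a\mapsto(\lambda_j+ka_j)_j$. Therefore $p_{\lambda\bmod k}(m)=p(m)$.

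I do not expect a serious obstacle. The only points needing care are the length argument in Step 1 and treating the boundary index $j=\ell$ uniformly with the others in Step 3; both rely on the condition $\lambda_\ell<k$.
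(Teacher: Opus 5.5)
Your proof is correct and uses the same bijection as the paper: write each $\gamma\in[\lambda]$ as $\gamma_j=\lambda_j+kh_j$ and send $\gamma$ to the partition $(h_1,h_2,\dots)$ of $m$. Your version is actually more complete than the paper's, which assumes every counted $\gamma$ has length exactly $\ell(\lambda)$ and so only reaches partitions of $m$ with at most $\ell(\lambda)$ parts (for $k=2$, $\lambda=(1)$, $m=2$ it misses $(3,2)\leftrightarrow(1,1)$); your convention $\lambda_j=0$ for $j>\ell$, together with the boundary inequality at $j=\ell$ from $\lambda_\ell<k$, correctly handles the extra trailing parts divisible by $k$ and also proves $h_j\ge 0$ rather than assuming it.
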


\begin{proof}
After establishing a few necessary properties of the partitions counted by $p_{\lambda\bmod{k}}(m)$, we will prove that $p_{\lambda\bmod{k}}(m)=p(m)$ by explicitly constructing a bijection between the two sets of partitions that these functions count.  Fix $\lambda=\left(\lambda_1,\lambda_2,\dots,\lambda_\ell\right)\in\mathcal{P}_k^+$ and $m\geq0$.  Then we have that $|\lambda|=\sum_i\lambda_i$ with $\lambda_\ell<k$ and $\lambda_i-\lambda_{i+1}<k$ for all $1\leq i<\ell$.  Each partition $\gamma=\left(\gamma_1,\gamma_2,\dots,\gamma_\ell\right)\in\mathcal{P}$ counted by $p_{\lambda\bmod k}(m)$ has size $\left(\sum_i\lambda_i\right)+mk$ and satisfies $\gamma_i\equiv\lambda_i$ for all $1\leq i\leq\ell$.  Suppose that $\gamma_i=\lambda_i+h_ik$ with $h_i\geq0$ for each $1\leq i\leq\ell$.  Then we have that $\sum_ih_i=m$.  We will show that the number of possible component-wise additions by the $\ell$-tuple $\left(h_1,h_2,\dots,h_\ell\right)$ is equal to the number of partitions of $m$.

First, if $m=0$, then $h_i=0$ for all $1\leq i\leq\ell$ and $\gamma=\lambda$ is the only partition counted by $p_{\lambda\bmod{k}}(0)$.  On the other hand, $p(0)=1$ as well.  Next, if $m=1$, then $h_i=1$ for exactly one $i$ with $1\leq i\leq\ell$, and $h_j=0$ for every $j\neq i$ with $1\leq j\leq\ell$.  In fact, the only possible value of $i$ for which we can have $h_i=1$ is $i=1$, for the following reason: if $h_i=1$ for $i\neq1$, then $\gamma=\left(\lambda_1,\dots,\lambda_{i-1},\lambda_i+k,\lambda_{i+1},\dots,\lambda_\ell\right)$ is not a partition, because $\lambda_{i-1}-\lambda_i<k$. So we see that $p_{\lambda\bmod{k}}(1)=p(1)=1$.  For the same reason, we must have, for any $m\geq0$, that $h_i\geq h_{i+1}$ for all $1\leq i<\ell$ in order for $\gamma$ to be a partition.

Define the map $\vartheta:\gamma\mapsto\left(h_1,h_2,\dots,h_\ell\right)$, omitting all trailing parts $h_i=0$. Since we have that $\sum_ih_i=m$ and $h_i\geq h_{i+1}$ for all $1\leq i<\ell$, we see that $\vartheta$ is a map from the set of partitions counted by $p_{\lambda\bmod{k}}(m)$ to the set of all ordinary partitions of size $m$.  Since each choice of $\gamma=\left(\gamma_1,\gamma_2,\dots,\gamma_\ell\right)\in[\lambda]$ uniquely determines the partition $\left(h_1,h_2,\dots,h_\ell\right)$ by $h_i:=\left(\gamma_i-\lambda_i\right)/k$ for each $1\leq i\leq\ell$, we have that $\vartheta$ is one-to-one.  We see that $\vartheta$ is also onto because, for any partition $\left(h_1,h_2,\dots,h_\ell\right)\in\mathcal{P}$, we have that $\left(\lambda_1+h_1k,\lambda_2+h_2k,\dots,\lambda_\ell+h_\ell k\right)\in[\lambda]$ and $\vartheta:\left(\lambda_1+h_1k,\lambda_2+h_2k,\dots,\lambda_\ell+h_\ell k\right)\mapsto\left(h_1,h_2,\dots,h_\ell\right)$.  Thus, $\vartheta$ is a bijection, so $p_{\lambda\bmod k}(m)=p(m)$.
\end{proof}

\begin{example}
Let $k=5$ and $\lambda=(8,6,4,3)\in\mathcal{P}_k^+$.  Then $|\lambda|=21$, and each partition $\gamma$ in $\mathcal{P}$ counted by $p_{\lambda\bmod{5}}(m)$, $m\geq0$, is of the form $\gamma=\left(8+5h_1,6+5h_2,4+5h_3,3+5h_4\right)$ for some partition $\left(h_1,h_2,h_3,h_4\right)\in\mathcal{P}$ of size $h_1+h_2+h_3+h_4=m$.  For example, if $m=4$, then the $p(4)=5$ possible partitions $\gamma$ which component-wise reduce modulo 5 to $\lambda=(8,6,4,3)$ and have size $21+4\cdot5=41$ are:
\begin{align*}
\gamma&=(8+4\cdot5,6,4,3)=(28,6,4,3);\\
\gamma&=(8+3\cdot5,6+1\cdot5,4,3)=(23,11,4,3);\\
\gamma&=(8+2\cdot5,6+2\cdot5,4,3)=(18,16,4,3);\\
\gamma&=(8+2\cdot5,6+1\cdot5,4+1\cdot5,3)=(18,11,9,3);\\
\gamma&=(8+1\cdot5,6+1\cdot5,4+1\cdot5,3+1\cdot5)=(13,11,9,8),
\end{align*}
where each possibility for $\gamma$ results from component-wise adding one ordinary partition of 4, with each part multiplied by 5, to $\lambda$. Namely, we component-wise add the following partitions to $\lambda=(8,6,4,3)$ to get the five possibilities for $\gamma$ above:
\begin{align*}
(4\cdot5)&=(20),\\
(3\cdot5,1\cdot5)&=(15,5),\\
(2\cdot5,2\cdot5)&=(10,10),\\
(2\cdot5,1\cdot5,1\cdot5)&=(10,5,5),\\
(1\cdot5,1\cdot5,1\cdot5,1\cdot5)&=(5,5,5,5).
\end{align*}
\end{example}

\subsection{Group Structure on $\mathcal{P}_{p,r}^{(0)}$}
In this subsection, we restrict the set $\mathcal{P}$ of partitions appropriately to obtain an abelian group structure under the operation of component-wise multiplication.

\begin{definition}
Let $k$ be a positive integer, and let $\lambda,\pi\in\mathcal{P}$ such that $\lambda=\left(\lambda_1,\lambda_2,\dots,\lambda_\ell\right)$ and $\pi=\left(\pi_1,\pi_2,\dots,\pi_\ell\right)$. We define the relation \textit{component-wise zero-congruence modulo $k$}, denoted $\sim_k'$ as follows.
\begin{itemize}
\item We say that $\lambda\sim_k'\emptyset$ if and only if $\lambda$ has some part divisible by $k$ or is $\emptyset$.
\item If both $\lambda$ and $\pi$ have no part divisible by $k$, then we say that $\lambda\sim_k'\pi$ if and only if $\lambda_j\equiv\pi_j\pmod{k}$ for all $1\leq j\leq\ell$.
\end{itemize}
If $\lambda\sim_k'\pi$, then we say that $\lambda$ and $\pi$ are \textit{component-wise zero-congruent modulo $k$}.
\end{definition}

Note that two partitions of different length and with no parts divisible by $k$ cannot be component-wise zero-congruent modulo $k$.  Next, we state that this relation is, in fact, an equivalence relation on partitions.  We omit the proof, because it is identical to the proof of Theorem \ref{additionequivalence} for partitions with no parts divisible by $k$.

\begin{theorem} \label{multiplicationequivalence}
    For any integer $k$, component-wise zero-congruence modulo $k$ is an equivalence relation on $\mathcal{P}$.
\end{theorem}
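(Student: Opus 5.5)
The plan is to make the relation $\sim_k'$ fully explicit by splitting $\mathcal{P}$ into two disjoint pieces, and then check reflexivity, symmetry and transitivity separately on each piece. Let
$$Z:=\{\emptyset\}\cup\{\lambda\in\mathcal{P}:\lambda\text{ has some part divisible by }k\},\qquad N:=\mathcal{P}\setminus Z,$$
so $N$ consists of the nonempty partitions with no part divisible by $k$. I read the definition as follows: $\lambda\sim_k'\pi$ holds exactly when either
\begin{itemize}
\item both $\lambda,\pi\in Z$, or
\item both $\lambda,\pi\in N$, with $\ell(\lambda)=\ell(\pi)$ and $\lambda_j\equiv\pi_j\pmod{k}$ for all $j$.
\end{itemize}
A partition in $Z$ and a partition in $N$ are never related.

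\textbf{Main obstacle: justifying this reading.} This first step is the only delicate one, since the definition as stated is not literally complete.
\begin{itemize}
\item The first bullet of the definition only declares $\lambda\sim_k'\emptyset$. I take ``every element of $Z$ is related to every other element of $Z$'' as the intended meaning, because transitivity through $\emptyset$ forces it.
\item The second bullet, as written, requires both partitions to have the common length $\ell$. Since $\emptyset$ has length $0$ and no parts at all, it would vacuously satisfy that clause against any other empty partition but against nothing in $N$. So no partition of $N$ is related to $\emptyset$, and hence none is related to anything in $Z$.
\end{itemize}
With the relation written this way, $\sim_k'$ is the disjoint union of the total relation on $Z$ and a relation $R$ on $N$. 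I will note that a disjoint union of equivalence relations on the blocks of a partition of a set is again an equivalence relation, which reduces everything to two checks.

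\textbf{The total relation on $Z$.} The relation $Z\times Z$ is trivially reflexive, symmetric and transitive.

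\textbf{The relation $R$ on $N$.} Here I mirror the proof of Theorem \ref{additionequivalence}. The difference is that equal lengths are now built in, so the tail condition ``$\lambda_j\equiv0\pmod{k}$ for $j$ beyond the shorter length'' never arises.
\begin{itemize}
\item Reflexivity follows from $\lambda_j\equiv\lambda_j\pmod{k}$.
\item Symmetry follows from the symmetry of integer congruence together with the symmetry of equality of lengths.
\item For transitivity, suppose $\lambda R\pi$ and $\pi R\tau$. Then $\ell(\lambda)=\ell(\pi)=\ell(\tau)$, and $\lambda_j\equiv\pi_j\equiv\tau_j\pmod{k}$ for every $j$, so $\lambda R\tau$.
\end{itemize}

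Finally, I will check that the mixed cases never break transitivity. If $\lambda\sim_k'\pi$ and $\pi\sim_k'\tau$, then $\lambda$ and $\pi$ lie in the same block, and $\pi$ and $\tau$ lie in the same block. Hence all three lie in the same block, and the appropriate case above applies. This completes the plan.
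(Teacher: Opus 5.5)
Your proposal is correct and follows essentially the paper's route. The paper omits the proof, saying it is identical to the proof of Theorem~\ref{additionequivalence} for partitions with no part divisible by $k$, which is your reflexivity, symmetry and transitivity check on $N$ (simplified by the equal-length requirement); your explicit handling of $Z$ as a single class, needed because the first clause of the definition only relates partitions to $\emptyset$, makes precise a point the paper leaves implicit.
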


\begin{example}\label{mult_eq_class_ex}
The equivalence class of the length-3 partition $(6,5,1)$ modulo $k=4$ contains the minimal-size partition $(2,1,1)$ as well as the partitions $(6,1,1)$ and $(14,13,5)$. However, the equivalence class of the partition $(6,5,4)$ modulo $k=4$ contains the empty partition $\emptyset$ and all partitions of length 3 with any part divisible by 4.
\end{example}

In order to obtain a group structure on equivalence classes of partitions under component-wise zero-congruence modulo $k$, we must restrict to a fixed partition length to preserve uniqueness of inverse partitions. For a non-negative integer $r$, we denote the set of equivalence classes of length-$r$ partitions under component-wise zero-congruence modulo $k$ by $\mathcal{P}_r/_{\sim_k'}$.

\begin{definition}
We define the binary operation \textit{component-wise multiplication} on $\mathcal{P}_r/_{\sim_k'}$ as follows. Let $[\lambda]=\left[\left(\lambda_1,\lambda_2,\dots,\lambda_r\right)\right]$ and $[\pi]=\left[\left(\pi_1,\pi_2,\dots,\pi_r\right)\right]$ be equivalence classes of partitions in $\mathcal{P}_r/_{\sim_k'}$. Then their component-wise product is $$[\lambda]\cdot[\pi]:=\left[\left(\lambda_1\pi_1,\lambda_2\pi_2,\dots,\lambda_r\pi_r\right)\right].$$
\end{definition}

In other words, the component-wise product of two equivalence classes $[\lambda],[\pi]$ of partitions is defined as the equivalence class of the partition which is the result of component-wise multiplying the respective parts of the two representative partitions $\lambda,\pi$. Similarly to component-wise addition, it is straightforward to see that component-wise multiplication is a well-defined operation on pairs of partition equivalence classes in $\mathcal{P}_r/_{\sim_k'}$.

\begin{theorem}
Let $p$ be a prime, and let $r$ be a non-negative integer.  The set $\mathcal{P}_r/_{\sim_p'}$ forms an abelian group under component-wise multiplication.
\end{theorem}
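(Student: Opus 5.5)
I would prove this by exhibiting an explicit isomorphism with the direct product $\left(\mathbb{Z}_p^\times\right)^r$ of $r$ copies of the multiplicative group of the field $\mathbb{Z}_p$. The group axioms then transfer directly, and the only work is to check that partitions impose no extra constraint. First I must settle which classes form the group. The class $[\emptyset]$, which contains every length-$r$ partition with a part divisible by $p$, is absorbing under component-wise multiplication. It therefore cannot have an inverse unless it is the only element. So I would take the group to be the set of classes $[\lambda]$ whose representatives have no part divisible by $p$. By the definition of $\sim_p'$, such a class is determined exactly by the residue vector $\left(\lambda_1 \bmod p,\dots,\lambda_r \bmod p\right)\in\left(\mathbb{Z}_p^\times\right)^r$. (If the intended set does include $[\emptyset]$, the statement can only hold after it is removed, and I would say so explicitly.)

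The first step is closure and well-definedness. Let $\lambda,\pi$ be length-$r$ partitions with no part divisible by $p$. Since both are weakly decreasing sequences of positive integers, the sequence $\left(\lambda_1\pi_1,\dots,\lambda_r\pi_r\right)$ is again weakly decreasing, because $\lambda_j\ge\lambda_{j+1}$ and $\pi_j\ge\pi_{j+1}$ give $\lambda_j\pi_j\ge\lambda_{j+1}\pi_{j+1}$. So the product is a partition of length $r$. By Euclid's lemma, since $p$ is prime, no product $\lambda_j\pi_j$ is divisible by $p$, so the product stays in a nonzero class. Well-definedness follows because $\lambda_j\equiv\gamma_j$ and $\pi_j\equiv\delta_j \pmod p$ imply $\lambda_j\pi_j\equiv\gamma_j\delta_j\pmod p$. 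Associativity and commutativity are then inherited from integer multiplication component by component, exactly as in the proof of Theorem \ref{group_eq_classes}. The identity is the class $[(1,1,\dots,1)]$ of length $r$.

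The main obstacle is the existence of inverses, because a residue vector has to be realized by an actual partition. For each $j$, let $a_j\in\{1,\dots,p-1\}$ be the inverse of $\lambda_j$ modulo $p$; this exists because $\mathbb{Z}_p$ is a field. The vector $(a_1,\dots,a_r)$ need not be weakly decreasing. I would lift it backward recursively, in the spirit of Definition \ref{componentwise_reduction_defn} but adding multiples of $p$ rather than subtracting them. Set $\gamma_r:=a_r$, and for $j<r$ let $\gamma_j$ be the smallest integer with $\gamma_j\equiv a_j \pmod p$ and $\gamma_j\ge\gamma_{j+1}$. Then $\gamma$ is a length-$r$ partition with no part divisible by $p$. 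Since $\lambda_j\gamma_j\equiv 1\pmod p$ for every $j$, we get $[\lambda]\cdot[\gamma]=[(1,\dots,1)]$. The same lifting argument shows that every vector in $\left(\mathbb{Z}_p^\times\right)^r$ is realized by some length-$r$ partition. Hence the map $[\lambda]\mapsto(\lambda_j\bmod p)_j$ is a bijective homomorphism onto $\left(\mathbb{Z}_p^\times\right)^r$, which gives the abelian group structure and also shows that the group has order $(p-1)^r$.
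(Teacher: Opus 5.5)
Your proof is correct. Its core is the same as the paper's: the identity is $[(1,\dots,1)]$, inverses come componentwise from inverses in $\mathbb{Z}_p$, and associativity and commutativity are inherited from integer multiplication. The paper checks these axioms directly, calling closure ``clear by definition'' and taking as inverse ``any partition $\gamma$ of length $r$'' with $\lambda_j\gamma_j\equiv 1\pmod p$.

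Recasting this as an isomorphism onto $\left(\mathbb{Z}_p^\times\right)^r$ makes you supply three things the paper omits:
\begin{enumerate}
\item a componentwise product of length-$r$ partitions is again weakly decreasing and, by Euclid's lemma, avoids multiples of $p$;
\item a partition $\gamma$ with the required residues actually exists, which your backward lifting shows; this is needed because the inverse residues need not be weakly decreasing;
\item the class $[\emptyset]$ is handled explicitly.
\end{enumerate}

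On the last point you are right and the paper is not. For $r\ge 1$ the class $[\emptyset]$ lies in $\mathcal{P}_r/_{\sim_p'}$; see Example \ref{mult_eq_class_ex}, and note that the later proof of Theorem \ref{mult_group_structure} explicitly sends such classes to $\emptyset$. This class is absorbing, and the paper's inverse step tacitly assumes $\lambda$ has no part divisible by $p$. So the statement as literally written fails for $r\ge 1$, and your version with $[\emptyset]$ removed is the correct one.

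Your order $(p-1)^r$ is also right. The paper's later remark claims $\mathcal{P}_{p,r}^{(0)}$ has $(p-1)p^{r-1}+1$ elements, which overcounts: exactly one of the $p$ candidate values for each larger part is a multiple of $p$. The correct count, including $\emptyset$, is $(p-1)^r+1$.
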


\begin{proof}
Let $\lambda=\left(\lambda_1,\lambda_2,\dots,\lambda_r\right)$, $\pi=\left(\pi_1,\pi_2,\dots,\pi_r\right)$, and $\tau=\left(\tau_1,\tau_2,\dots,\tau_r\right)$ be partitions. It is clear by definition that $\mathcal{P}_r/_{\sim_p'}$ is closed under the operation of component-wise multiplication.

The equivalence class $[(\underbrace{1,\dots,1)}_{r\text{ parts}})]\in\mathcal{P}_r/_{\sim_p'}$ is the identity element, since clearly $$[\lambda]\cdot[(\underbrace{1,\dots,1}_{r\text{ times}})]=[(\underbrace{1,\dots,1}_{r\text{ times}})]\cdot[\lambda]=[\lambda].$$ Next, consider any partition $\gamma$ of length $r$ whose parts $\gamma_j$ satisfy $\lambda_j\gamma_j=\gamma_j\lambda_j\equiv1\pmod{p}$ for all $1\leq j\leq r$. In other words, the parts of $\gamma$ are the multiplicative inverses modulo $p$ of the respective parts of $\lambda$. The equivalence class $[\gamma]\in\mathcal{P}_r/_{\sim_p'}$ is the inverse element of $[\lambda]$, since
\begin{align*}
[\lambda]\cdot[\gamma]=\left[\left(\lambda_1\gamma_1,\lambda_2\gamma_2,\dots,\lambda_r\gamma_r\right)\right]=[(\underbrace{1,\dots,1}_{r\text{ times}})].
\end{align*}
It is straightforward to show that component-wise multiplication is both associative and commutative, since this operation does not depend on the choice of equivalence class representative and simply consists of integer multiplication in each component of the representative partitions. Thus $\mathcal{P}_r/_{\sim_p'}$ is an abelian group under component-wise multiplication.
\end{proof}

We now introduce the type of reduction required to obtain an abelian group structure on partitions, as opposed to equivalence classes of partitions, under component-wise multiplication.

\begin{definition}
Let $k$ be any positive integer. We define the unary operation \textit{component-wise zero-reduction modulo $k$} on any partition $\lambda\in\mathcal{P}$ as follows.
\begin{itemize}
\item If $\lambda$ has some part divisible by $k$, then $\lambda$ reduces to $\emptyset$.
\item If $\lambda$ has no part divisible by $k$, then we perform component-wise reduction modulo $k$ on $\lambda$ as defined in Definition \ref{componentwise_reduction_defn}.
\end{itemize}
We call a partition $\lambda\in\mathcal{P}$ \textit{component-wise zero-reduced modulo $k$} if $\lambda$ is $\emptyset$, or if $\lambda$ has no parts divisible by $k$ and is component-wise reduced modulo $k$.
\end{definition}

Component-wise zero-reduction modulo $k$ reduces a partition $\lambda\in\mathcal{P}$ with no parts divisible by $k$ to the partition $\lambda'$ of minimal size such that each part of $\lambda'$ is congruent modulo $k$ to the corresponding part of $\lambda$.  However, if a partition $\lambda$ has a part divisible by $k$, then component-wise zero-reduction modulo $k$ reduces $\lambda$ to $\emptyset$.  In other words, a component-wise zero-reduced modulo $k$ partition either is $\emptyset$ or has no parts divisible by $k$, difference between any two consecutive parts less than $k$, and smallest part less than $k$.

\begin{definition}\label{componentwise_mult_defn}
For any positive integer $k$, we define the binary operation \textit{component-wise multiplication and zero-reduction modulo $k$}, denoted $\cdot_k^{(0)}$, on $\mathcal{P}$ as follows.  Let $\lambda=\left(\lambda_1,\lambda_2,\dots,\lambda_\ell\right)$ and $\pi=\left(\pi_1,\pi_2,\dots,\pi_i\right)$ be any two partitions in $\mathcal{P}$ with $\ell\geq i$.  Define $\lambda\cdot_k^{(0)}\pi$ by first component-wise multiplying the parts of $\lambda$ and $\pi$: $$(\lambda_1\pi_1,\lambda_2\pi_2,\dots,\lambda_i\pi_i,\underbrace{0,\dots,0}_{\ell-i})=\left(\lambda_1\pi_1,\lambda_2\pi_2,\dots,\lambda_i\pi_i\right),$$ and then component-wise zero-reducing the result modulo $k$.
\end{definition}

Recall that we use the definition $\pi_j=0$ for all $j>\ell(\pi)$ when component-wise multiplying two partitions of different length.

Recall also that the set $\mathcal{P}_{k,r}^{(0)}$ is the set of length-$r$ partitions with no part divisible by $k$, where the difference between any two consecutive parts is less than $k$ and the smallest part is less than $k$. We now prove Theorem \ref{mult_group_structure}: the set $\mathcal{P}_{p,r}^{(0)}$ is an abelian group under $\cdot_p^{(0)}$, for any prime $p$.

\begin{proof}[Proof of Theorem \ref{mult_group_structure}]
We prove the result by showing that the set $\mathcal{P}_{p,r}^{(0)}$ with the operation of component-wise multiplication and zero-reduction modulo $p$ is isomorphic to the abelian group $\mathcal{P}_r/_{\sim_p'}$.

Let $p$ be a prime, and fix a non-negative integer $r$. Let $\lambda\in\mathcal{P}$ be any length-$r$ partition. Define the map $\psi$ on $\mathcal{P}_r/_{\sim_p'}$ by $\psi([\lambda]):=\gamma$, where $\gamma$ is the result of component-wise zero-reduction modulo $p$ on $\lambda$.  If $\lambda$ has a part divisible by $p$, then $\gamma=\emptyset$. Otherwise, we have $\gamma=\left(\gamma_1,\gamma_2,\dots,\gamma_r\right)$, where $\gamma_r$ and $\gamma_j$, $1\leq j<r$, are defined the same as in \eqref{gamma_ell_defn} and \eqref{gamma_j_defn}, respectively, with $k=p$. Let $\pi\in\mathcal{P}$ be any partition, and suppose that $\psi([\pi]):=\nu$. If $\nu=\gamma$, then either we have that $\nu=\gamma=\emptyset$, in which case $\lambda$ and $\pi$ both have a part divisible by $p$ and therefore $[\lambda]=[\pi]=[\emptyset]$, or we have that $\nu=\gamma\neq\emptyset$, in which case the parts of $\gamma$ and $\nu$ satisfy \eqref{ell} and \eqref{other}, with $k=p$. In the latter case, we see that if $\nu=\gamma$, then $\pi_j\equiv\lambda_j\pmod{p}$ for all $1\leq j\leq r$, and therefore $\pi$ and $\lambda$ are component-wise congruent modulo $p$.  Thus, we have shown that $\psi([\lambda])=\psi([\pi])$ implies $[\pi]=[\lambda]$, and so $\psi$ is one-to-one.

Now, observe that the image of each $[\lambda]\in\mathcal{P}_r/_{\sim_p'}$ is a component-wise zero-reduced modulo $p$ partition, so $\psi$ maps to $\mathcal{P}_{p,r}^{(0)}$.  Let $\mu\in\mathcal{P}_{p,r}^{(0)}$.  Then we have that $\psi([\mu])=\mu$, since $\mu$ is already component-wise zero-reduced modulo $p$.  Thus, we have shown that $\psi:\mathcal{P}_r/_{\sim_p'}\to\mathcal{P}_{p,r}^{(0)}$ is onto.

Finally, we show that $\psi([\lambda]\cdot[\pi])=\psi([\lambda])\cdot_p^{(0)}\phi([\pi])$ for any $[\lambda],[\pi]\in\mathcal{P}_r/_{\sim_p'}$. If any part of $\lambda$ or $\pi$ is divisible by $p$, then the left and right sides are both equal to the empty partition $\emptyset$. Suppose $\lambda$ and $\pi$ have no parts divisible by $p$. On the left, we have that
\begin{align*}
\psi([\lambda]\cdot[\pi])&=\psi\left(\left[\left(\lambda_1\pi_1,\lambda_2\pi_2,\dots,\lambda_r\pi_r\right)\right]\right)\\
&=\rho:=\left(\rho_1,\rho_2,\dots,\rho_r\right),
\end{align*}
where $$\rho_r:=\lambda_r\pi_r-\left\lfloor\frac{\lambda_r\pi_r}{p}\right\rfloor p$$ and $$\rho_j:=\lambda_j\pi_j-\left\lfloor\frac{\lambda_j\pi_j-\rho_{j+1}}{p}\right\rfloor p$$ for each $1\leq j<r$. On the other hand, we have that $\psi([\lambda])\cdot_p^{(0)}\psi([\pi])=\gamma\cdot_p^{(0)}\nu$, where the parts of $\gamma$ and $\nu$ are defined in \eqref{gamma_ell_defn}, \eqref{gamma_j_defn}, \eqref{ell}, and \eqref{other}, with $k=p$.  Then we have that $\psi([\lambda])\cdot_p^{(0)}\psi([\pi]):=\zeta=\left(\zeta_1,\zeta_2,\dots,\zeta_r\right)$, where $$\zeta_r:=\gamma_r\nu_r-\left\lfloor\frac{\gamma_r\nu_r}{p}\right\rfloor p$$ and $$\zeta_j:=\gamma_j\nu_j-\left\lfloor\frac{\gamma_j\nu_j-\zeta_{j+1}}{p}\right\rfloor p$$ for each $1\leq j<r$. We see that $\rho_j=\zeta_j$ for all $1\leq j\leq r$, because both $\rho_j$ and $\zeta_j$ are component-wise zero-reduced modulo $p$ and $\lambda_j\pi_j\equiv\gamma_j\nu_j\pmod{p}$. Thus, we have shown that $\rho=\zeta$, and therefore $\psi$ is an isomorphism. This completes the proof that $\mathcal{P}_{p,r}^{(0)}$ is an abelian group.
\end{proof}

\begin{remark}
We note here that $\mathcal{P}_{p,r}^{(0)}$ is a finite abelian group. For any given prime $p$ and any non-negative integer $r<p$, the partition in $\mathcal{P}_{p,r}^{(0)}$ of largest size is the partition $$\lambda_{\max}:=\left(r(p-1),(r-1)(p-1),\dots,2(p-1),p-1\right),$$
which has size $(r+(r-1)+\cdots+2+1)(p-1)=r(r+1)(p-1)/2$. If $r>p$, then this partition reduces to $\emptyset$ since the part $p(p-1)$ is divisible by $p$, so the partition in $\mathcal{P}_{p,r}^{(0)}$ of largest size has parts strictly smaller than the respective parts of $\lambda_{\max}$. In either case, any partition of larger size must have smallest part at least $p$, or a difference of at least $p$ between some pair of consecutive parts, or some part divisible by $p$, or more than $r$ parts; such partitions are not contained in $\mathcal{P}_{p,r}^{(0)}$. In general, each non-empty partition $\lambda=\left(\lambda_1,\lambda_2,\dots,\lambda_r\right)\in\mathcal{P}_{p,r}^{(0)}$, $r<p$, has the $p-1$ choices $1,2,\dots,p-1$ for the smallest part $\lambda_r$, and then the $p$ choices $\lambda_j+0,\lambda_j+1,\lambda_j+2,\dots,\lambda_j+p-1$ for the part $\lambda_{j-1}$, for all $1<j\leq r$. Counting the total number of possibilities, we see that if $r<p$, then $$\left|\mathcal{P}_{p,r}^{(0)}\right|=(p-1)p^{r-1}+1.$$ If $r\geq p$, then the number of partitions in $\mathcal{P}_{p,r}^{(0)}$ is less than $(p-1)p^{r-1}+1$.
\end{remark}

%%%%%%%%%%%%%%%%%%%%%%%%%%%%%%%%%%%%%% 

\section{Vector Space Structures on Reduced Partitions}\label{section_vector_spaces}

In this section, we prove two vector space structures on partitions.

\subsection{Vector Space Structure on $\mathcal{P}_{\cup_p}$}

First, we prove that the abelian group $\mathcal{P}_{\cup_p}$, for any prime $p$, is also a vector space over the finite field $\mathbb{Z}_p$.

\begin{proof}[Proof of Theorem \ref{vector_space_structures}, Part 1]
    Let $a,b\in \mathbb{Z}_p$ and $\lambda = \left(\lambda_1, \lambda_2, \dots, \lambda_\ell\right),\pi=\left(\pi_1,\pi_2,\dots,\pi_i\right)\in\mathcal{P}_{\cup_p}$. We define scalar multiplication by $a\lambda:=\underbrace{\lambda\cup_p\cdots\cup_p\lambda}_{a\text{ times}}\in\mathcal{P}_{\cup_p}$.

We have that $1\lambda=\lambda$, since $1\lambda$ is defined to be the result of multiplicity-reduction modulo $k$ on $\lambda$, which is already multiplicity-reduced modulo $k$.
        
Let $\alpha$ be a part of $\lambda$. Recall that $m_\lambda(\alpha)\geq0$ denotes the multiplicity of the part $\alpha$ in $\lambda$. Note that $m_\lambda(\alpha) < p$, since $\lambda$ is multiplicity-reduced modulo $p$. We have that $m_{(ab)\lambda}(\alpha)=M_1$, where $M_1$ is the unique non-negative integer such that $M_1\equiv (ab)m_\lambda(\alpha)\pmod{p}$ and $M_1<p$. On the other hand, $m_{b\lambda}(\alpha)=M_2$, where $M_2$ is the unique non-negative integer such that $M_2\equiv bm_\lambda(\alpha)\pmod{p}$ and $M_2<p$, and so $m_{a(b\lambda)}(\alpha)=M_3$, where $M_3$ is the unique non-negative integer such that $M_3\equiv a\left(bm_\lambda(\alpha)\right)\pmod{p}$ and $M_3<p$. Since multiplication in $\mathbb{Z}_p$ is associative, we have that $M_1=M_3$, so $m_{(ab)\lambda}(\alpha)=m_{a(b\lambda)}(\alpha)$.  Therefore, since $\alpha$ is an arbitrary part of $\lambda$, we have shown that $(ab)\lambda=a(b\lambda)$.
        
    Now, for any part $\beta$ of $\lambda$ or $\pi$, we have that $m_{\lambda\cup_p\pi}(\beta)=M_4$, where $M_4$ is the unique non-negative integer such that $M_4\equiv m_\lambda(\beta)+m_\pi(\beta)\pmod{p}$ and $M_4<p$.  Then we calculate that $m_{a\left(\lambda\cup_p\pi\right)}(\beta)=M_5$, where $M_5$ is the unique non-negative integer such that $M_5\equiv a\left(m_\lambda(\beta)+m_\pi(\beta)\right)\pmod{p}$ and $M_5<p$.  Similarly, we define $M_6$ as the unique non-negative integer such that $M_6\equiv am_\lambda(\beta)\pmod{p}$ and $M_6<p$, and we define $M_7$ as the unique non-negative integer such that $M_7\equiv am_\pi(\beta)\pmod{p}$ and $M_7<p$.  Then we see that $m_{(a\lambda)\cup_p(a\pi)}(\beta)=M_8$, where $M_8$ is the unique non-negative integer such that $M_8\equiv am_\lambda(\beta)+am_\pi(\beta)\pmod{p}$ and $M_8<p$. Since the distributive laws hold in $\mathbb{Z}_p$, we have that $M_5=M_8$ and therefore $m_{a\left(\lambda\cup_p\pi\right)}(\beta)=m_{(a\lambda)\cup_p(a\pi)}(\beta)$.  We have shown that $a\left(\lambda\cup_p\pi\right)=(a\lambda)\cup_p(a\pi)$. 
        
Finally, we recall that $\alpha$ is an arbitrary element of $\lambda$, and we compare $m_{(a+b)\lambda}(\alpha)$ and $m_{(a\lambda)\cup_p(b\lambda)}(\alpha)$. We observe that $m_{(a+b)\lambda}(\alpha)=M_9$, where $M_9$ is the unique non-negative integer such that $M_9\equiv(a+b)m_\lambda(\alpha)\pmod{p}$ and $M_9<p$.  Now, we have that $m_{a\lambda}(\alpha)=M_{10}$, where $M_{10}$ is the unique non-negative integer such that $M_{10}\equiv am_\lambda(\alpha)\pmod{p}$ and $M_{10}<p$. Then $m_{(a\lambda)\cup_p(b\lambda)}(\alpha)=M_{11}$, where $M_{11}$ is the smallest non-negative integer such that $M_{11}\equiv M_{10}+M_2\equiv am_\lambda(\alpha)+bm_\lambda(\alpha)\pmod{p}$.  Again, since the distributive laws hold in $\mathbb{Z}_p$, we have that $M_9=M_{11}$ and therefore $m_{(a+b)\lambda}(\alpha)=m_{(a\lambda)\cup_p(b\lambda)}(\alpha)$.  Thus, we have that $(a+b)\lambda=(a\lambda)\cup_p(b\lambda)$.  We have now shown that $\mathcal{P}_{\cup_p}$ is a vector space over $\mathbb{Z}_p$.
\end{proof}

We identify a basis for the vector space $\mathcal{P}_{\cup_p}$ over $\mathbb{Z}_p$ in the next theorem.

\begin{theorem}\label{basis_thm}
The set $\{\gamma\in\mathcal{P}:\ell(\gamma)=1\}$ consisting of all partitions with only a single part forms a basis for the vector space $\mathcal{P}_{\cup_p}$ over $\mathbb{Z}_p$. 
\end{theorem}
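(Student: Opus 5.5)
The plan is to work entirely in frequency notation, as in the proofs of Theorem \ref{isom_2} and Theorem \ref{vector_space_structures}, Part 1. Every $\lambda\in\mathcal{P}_{\cup_p}$ is determined by its multiplicity function $\alpha\mapsto m_\lambda(\alpha)$, which takes values in $\{0,1,\dots,p-1\}$ and is nonzero for only finitely many $\alpha$. We identify these values with elements of $\mathbb{Z}_p$. The single-part partitions are $e_\alpha:=(\alpha)$ for $\alpha\geq 1$, and $m_{e_\alpha}(\beta)=1$ if $\beta=\alpha$ and $0$ otherwise. Note that each $e_\alpha$ lies in $\mathcal{P}_{\cup_p}$ since $1<p$.

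The first step is a formula for linear combinations. For scalars $a_\alpha\in\mathbb{Z}_p$, only finitely many nonzero, we want
\[
m_{\bigcup_{p,\alpha} a_\alpha e_\alpha}(\beta)\equiv a_\beta \pmod p ,
\]
where the combination is formed with $\cup_p$. To get this, observe that by the definition of scalar multiplication, $a_\alpha e_\alpha$ is the partition consisting of exactly $a_\alpha$ copies of $\alpha$, where $0\le a_\alpha<p$. Concatenating the $a_\alpha e_\alpha$ for distinct $\alpha$ causes no reduction, because no part size repeats $p$ times. So the combination is simply $\langle \alpha^{a_\alpha}\rangle$. This is the only place where the $\cup_p$-reduction and the scalar action could interact. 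The computations of $M_1,\dots,M_{11}$ in the proof of Part 1 already show that multiplicities add and scale modulo $p$, so I expect this step to be routine.

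The second step is spanning. Given $\lambda\in\mathcal{P}_{\cup_p}$ with distinct parts $\alpha_1,\dots,\alpha_r$, take $a_{\alpha_i}=m_\lambda(\alpha_i)$. Then $\lambda=(m_\lambda(\alpha_1)e_{\alpha_1})\cup_p\cdots\cup_p(m_\lambda(\alpha_r)e_{\alpha_r})$ by the formula above. The empty partition is the empty combination.

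The third step is linear independence. Suppose a finite combination $\bigcup_p a_\alpha e_\alpha$ equals $\emptyset$, the zero vector. By the formula its multiplicity at each $\beta$ is $a_\beta$ reduced into $\{0,\dots,p-1\}$, and this must equal $m_\emptyset(\beta)=0$. Hence $a_\beta=0$ in $\mathbb{Z}_p$ for every $\beta$.

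The only subtle point is that the basis is infinite, so spanning and independence must be understood for finite linear combinations. This is consistent with the fact that each partition has finitely many parts. Uniqueness of the coefficients then also follows directly from the formula, since the coefficients are just the multiplicities.
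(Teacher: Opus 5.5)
Your proposal is correct and follows essentially the same route as the paper. Spanning comes from rebuilding $\lambda$ out of its single parts under $\cup_p$, with no reduction since every multiplicity is below $p$; independence comes from the fact that a combination of distinct single-part partitions can only equal $\emptyset$ if every coefficient is $0$ in $\mathbb{Z}_p$. The differences are cosmetic: you group repeated parts into coefficients $m_\lambda(\alpha)$, and you state explicitly that the multiplicity of $\beta$ in the combination is exactly $a_\beta$, which justifies the independence step (and uniqueness of coordinates) more transparently than the paper's one-line appeal to distinctness and nonemptiness.
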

\begin{proof}
    Let $\lambda = \left(\lambda_1,\lambda_2,\dots, \lambda_\ell\right)\in\mathcal{P}_{\cup_p}$. Then each part of $\lambda$ has multiplicity less than $p$. Observe that $\lambda_i\geq0$ for all $1\leq i\leq\ell$, with equality if and only if $\lambda=\emptyset$. Then the length-1 partitions $\left(\lambda_i\right)$, $1\leq i\leq\ell$, formed by the individual parts of $\lambda$ are all elements of the set $\{\gamma\in\mathcal{P}:\ell(\gamma)=1\}$. Consider the concatenation and reduction modulo $p$ of all such length-1 partitions originating from the parts of $\lambda$: $\left(\lambda_1\right) \cup_p \left(\lambda_2\right)\cup_p\cdots \cup_p \left(\lambda_\ell\right)$. Since the parts of $\lambda$ have multiplicity less than $p$, the reduction step does not remove any parts of the concatenation and reduction modulo $p$. Thus, we see that $\left(\lambda_1\right) \cup_p \left(\lambda_2\right)\cup_p\cdots \cup_p \left(\lambda_\ell\right) = \left(\lambda_1,\lambda_2, \dots, \lambda_\ell\right)=\lambda$. This proves that the set $\{\gamma\in\mathcal{P}:\ell(\gamma)=1\}$ spans $\mathcal{P}_{\cup_p}$.
    
    Now, for some positive integer $k$, consider any $k$ distinct elements $\left(\pi_1\right),\left(\pi_2\right),\dots,\left(\pi_k\right)$ in the set $\{\gamma\in\mathcal{P}:\ell(\gamma)=1\}$, and assume that $a_1\left(\pi_1\right)\cup_p a_2\left(\pi_2\right)\cup_p \cdots \cup_p a_k\left(\pi_k\right)=\emptyset$, where $a_1,a_2, \dots, a_k\in \mathbb{Z}_p$. For the assumed equation to hold, the partition on the left side must reduce to $\emptyset$ after concatenation and reduction modulo $p$ or be equal to $\emptyset$ from the start. Since the length-1 partitions $\left(\pi_i\right)$, $1\leq i \leq k$, are distinct and non-empty, we must have that $a_i\equiv 0 \pmod{p}$ for all $1\leq i\leq k$. Since $a_i\in\mathbb{Z}_p$, this implies that $a_i = 0$ for each $1\leq i\leq k$. Thus, the set $\{\gamma\in\mathcal{P}:\ell(\gamma)=1\}$ is linearly independent and is therefore a basis for the vector space $\mathcal{P}_{\cup_p}$ over $\mathbb{Z}_p$. 
\end{proof}

The following corollary of Theorem \ref{basis_thm} gives infinitely many examples of partition subspaces.

\begin{corollary}\label{subspace_cor}
Let $p$ be a prime.  Let $m,a_1,a_2,\dots,a_m$ be any positive integers, and define $A_m:=\left\{a_1,a_2,\dots,a_m\right\}$. The subset of partitions generated by the set $\left\{\left(\alpha\right)\in\mathcal{P}:\alpha\in A_m\right\}$ forms a subspace of the vector space $ \mathcal{P}_{\cup_p}$ over $\mathbb{Z}_p$. 
\end{corollary}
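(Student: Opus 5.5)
The plan is to identify the subset in the statement explicitly and then apply the standard subspace test inside the vector space $\mathcal{P}_{\cup_p}$ of Theorem \ref{vector_space_structures}, Part 1. Write $V_{A_m}$ for the set of all $\mathbb{Z}_p$-linear combinations $c_1(a_1)\cup_p c_2(a_2)\cup_p\cdots\cup_p c_m(a_m)$ with $c_i\in\mathbb{Z}_p$; this is the span of $\{(\alpha):\alpha\in A_m\}$. Without loss of generality we may discard repetitions among $a_1,\dots,a_m$.

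\textbf{Step 1: a description by multiplicities.} We check that $V_{A_m}$ equals
\[
W:=\{\lambda\in\mathcal{P}_{\cup_p}: \text{every part of }\lambda\text{ lies in }A_m\}.
\]
By the definition of scalar multiplication, $c(a_i)$ is the partition consisting of $c$ copies of $a_i$, where $0\le c<p$ is taken as the least nonnegative representative. Since the $a_i$ are distinct, concatenating these partitions causes no reduction, so every element of $V_{A_m}$ lies in $W$. Conversely, the spanning argument in the proof of Theorem \ref{basis_thm} writes any $\lambda\in W$ as $(\lambda_1)\cup_p\cdots\cup_p(\lambda_\ell)$. Every $\lambda_j$ lies in $A_m$, so $\lambda\in V_{A_m}$.

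\textbf{Step 2: the subspace test on $W$.} First, $\emptyset\in W$ vacuously. Next take $\lambda,\pi\in W$ and $c\in\mathbb{Z}_p$. Forming $\lambda\cup_p\pi$ only concatenates and then deletes blocks of $p$ equal parts, so it creates no new part sizes. Likewise $c\lambda$ is an iterated $\cup_p$ of $\lambda$ with itself, so it also creates no new part sizes. Hence both remain in $W$. The inverse of $\lambda$ from the proof of Theorem \ref{group_structures}, Part 1, uses only parts of $\lambda$, and in any case it equals $(p-1)\lambda$, which is covered by closure under scalar multiplication. The vector space axioms are inherited from $\mathcal{P}_{\cup_p}$, so $W=V_{A_m}$ is a subspace.

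\textbf{Step 3: dimension.} The vectors $(a_1),\dots,(a_m)$ are distinct elements of the basis of Theorem \ref{basis_thm}, so they are linearly independent. Therefore the subspace has dimension $|A_m|$, and it contains exactly $p^{|A_m|}$ partitions.

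There is no real obstacle here. The only point needing care is Step 1: we must confirm that scalar multiples of a one-part partition behave as expected, namely that $c(a)$ has exactly $c \bmod p$ parts equal to $a$. This follows from the multiplicity bookkeeping in the proof of Theorem \ref{vector_space_structures}, Part 1.
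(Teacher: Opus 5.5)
Your argument is correct, but it takes a more explicit route than the paper. The paper's proof is a single sentence: $\{(\alpha):\alpha\in A_m\}$ is a subset of the basis of Theorem \ref{basis_thm}, so the set it generates is a subspace. Logically this rests only on the general fact that the span of any subset of a vector space is a subspace. Being part of a basis plays no role in the subspace claim; it matters only for independence.

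You instead identify the span concretely as the set $W$ of partitions in $\mathcal{P}_{\cup_p}$ all of whose parts lie in $A_m$, and then run the subspace test on $W$ by hand. Given Step 1, Step 2 is strictly redundant, since a span is automatically closed under $\cup_p$ and scalar multiplication. It is harmless, though, and it makes the closure visible at the level of part sizes.

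Your route gives information the paper does not record: an explicit membership criterion for the subspace, its dimension $|A_m|$, and its cardinality $p^{|A_m|}$. Your Step 1 also proves the independence of $(a_1),\dots,(a_m)$ directly. With representatives $0\le c_i<p$, the combination $c_1(a_1)\cup_p\cdots\cup_p c_m(a_m)$ has exactly $c_i$ parts equal to $a_i$, so it is $\emptyset$ only when every $c_i=0$. Step 3 therefore need not lean on the rather terse independence argument in the proof of Theorem \ref{basis_thm}.
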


\begin{proof}
The corollary follows immediately from Theorem \ref{basis_thm}, because any set of the form $\left\{\left(\alpha\right)\in\mathcal{P}:\alpha\in A_m\right\}$ is a subset of the basis for the vector space $\mathcal{P}_{\cup_p}$ over $\mathbb{Z}_p$. 
\end{proof}

As a direct consequence of Corollary \ref{subspace_cor}, we obtain another abelian group of partitions under the operation of concatenation and reduction modulo $p$.

\begin{corollary}\label{subgroup_cor}
Let $p$ be a prime.  Let $m,a_1,a_2,\dots,a_m$ be any positive integers, and define $A_m:=\left\{a_1,a_2,\dots,a_m\right\}$. The subset of partitions generated by $\left\{\left(\alpha\right)\in\mathcal{P}:\alpha\in A_m\right\}$ forms a subgroup of $\mathcal{P}_{\cup_p}$.
\end{corollary}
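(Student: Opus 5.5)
The plan is to deduce Corollary \ref{subgroup_cor} directly from Corollary \ref{subspace_cor}, using only the fact that a subspace of a vector space is, in particular, a subgroup of its underlying additive group. Let $W$ denote the subset of $\mathcal{P}_{\cup_p}$ generated by $\left\{(\alpha)\in\mathcal{P}:\alpha\in A_m\right\}$. Concretely, $W$ consists of all $\mathbb{Z}_p$-linear combinations
$$c_1(a_1)\cup_p c_2(a_2)\cup_p\cdots\cup_p c_m(a_m),\qquad c_i\in\mathbb{Z}_p.$$
These are exactly the partitions whose parts all lie in $A_m$, each occurring fewer than $p$ times. By Corollary \ref{subspace_cor}, $W$ is a subspace of $\mathcal{P}_{\cup_p}$ over $\mathbb{Z}_p$. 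The vector addition in Theorem \ref{vector_space_structures}, Part 1, is precisely the group operation $\cup_p$ of Theorem \ref{group_structures}, Part 1, so it only remains to check the subgroup criterion for $W$ under this operation.

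The criterion has three parts, and each follows from the subspace axioms.
\begin{enumerate}
\item \textbf{Identity.} $W$ contains the zero vector, which is the identity $\emptyset$ of $(\mathcal{P}_{\cup_p},\cup_p)$; it arises by taking every $c_i=0$.
\item \textbf{Closure.} $W$ is closed under $\cup_p$, since it is closed under vector addition.
\item \textbf{Inverses.} $W$ is closed under inverses. For $\lambda\in W$, the scalar multiple $(p-1)\lambda$ lies in $W$ by closure under scalar multiplication. Moreover $\lambda\cup_p(p-1)\lambda=p\lambda=0\cdot\lambda=\emptyset$, using the distributive law $(a+b)\lambda=(a\lambda)\cup_p(b\lambda)$ established in the proof of Theorem \ref{vector_space_structures}, Part 1. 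So $(p-1)\lambda$ is the group inverse of $\lambda$, which is the partition $\gamma$ constructed in the proof of Theorem \ref{group_structures}, Part 1.
\end{enumerate}

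Associativity and commutativity are inherited from $\mathcal{P}_{\cup_p}$, so $W$ is a subgroup.

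The only delicate step is the identification $p\lambda=\emptyset$, that is, confirming that the inverse supplied by the scalar $-1\in\mathbb{Z}_p$ is the same element as the group-theoretic inverse. This can be checked multiplicity by multiplicity: for each part size $\alpha$, the multiplicity $m_{(p-1)\lambda}(\alpha)$ is the least residue of $(p-1)m_\lambda(\alpha)\equiv -m_\lambda(\alpha)\pmod p$. This is $p-m_\lambda(\alpha)$ when $m_\lambda(\alpha)>0$, and $0$ otherwise, which matches $\gamma$ exactly. I expect no real obstacle beyond this bookkeeping.
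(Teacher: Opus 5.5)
Your proposal is correct and matches the paper's approach: the paper omits the proof as immediate from Corollary~\ref{subspace_cor}, since the generated subset is a $\mathbb{Z}_p$-subspace and hence a subgroup of the underlying additive group $(\mathcal{P}_{\cup_p},\cup_p)$. Your multiplicity-by-multiplicity check that $(p-1)\lambda$ equals the inverse $\gamma$ is a harmless extra verification, not a required step.
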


The proof of Corollary \ref{subgroup_cor} is immediate, so we omit it.

\subsection{Vector Space Structure on $\mathcal{P}_k^+$}

Here, we prove that for any positive integer $k$, the abelian group $\mathcal{P}_k^+$ is also a vector space over the finite field $\mathbb{Z}_p$, for any prime $p$.

\begin{proof}[Proof of Theorem \ref{vector_space_structures}, Part 2]
We prove first that $\mathcal{P}/_{\sim_k}$ is a vector space, and then that $\mathcal{P}_k^+$ is isomorphic to $\mathcal{P}/_{\sim_k}$ via a vector space isomorphism.

Let $a,b \in \mathbb{Z}_p$, and let $[\lambda] = \left[\left(\lambda_1,\dots,\lambda_\ell\right)\right]$ and $[\pi] = \left[\left(\pi_1, \dots , \pi_i\right)\right]$ be equivalence classes of partitions under component-wise congruence modulo $k$. Assume without loss of generality that $\ell \geq i$. We define scalar multiplication by $a[\lambda] = \underbrace{[\lambda] + \cdots + [\lambda]}_{a\text{ times}}$, which yields another equivalence class of partitions.  It is clear that $1[\lambda]=[\lambda]$ by definition.

We first show that $a(b[\lambda])=(ab)[\lambda]$.  We calculate that
\begin{align*}
a(b[\lambda])&=a(\underbrace{[\lambda]+\dots+[\lambda]}_{b\text{ times}})\\
&=a\left(\left[\left(b\lambda_1,b\lambda_2,\dots,b\lambda_\ell\right)\right]\right)\\
&=\underbrace{\left[\left(b\lambda_1,b\lambda_2,\dots,b\lambda_\ell\right)\right]+\cdots+\left[\left(b\lambda_1,b\lambda_2,\dots,b\lambda_\ell\right)\right]}_{a\text{ times}}\\
&=\left[\left(a\left(b\lambda_1\right),a\left(b\lambda_2\right),\dots,a\left(b\lambda_\ell\right)\right)\right]\\
&=\left[\left((ab)\lambda_1,(ab)\lambda_2,\dots,(ab)\lambda_\ell\right)\right]\\
&=\underbrace{[\lambda]+\cdots+[\lambda]}_{ab\text{ times}}=(ab)[\lambda].
\end{align*}

A similar straightforward computation shows that $(a+b)[\lambda]=(a[\lambda])+(b[\lambda])$. Finally, we recall that $\pi_j=0$ for any $j>\ell(\pi)$, and we show that $a\left([\lambda]+[\pi]\right)=(a[\lambda])+(a[\pi])$:
   \begin{align*}
       a([\lambda]+[\pi]) &=a\left(\left[\lambda_1+\pi_1,\lambda_2+\pi_2,\dots,\lambda_\ell+\pi_\ell\right]\right)\\
       &=\underbrace{\left[\lambda_1+\pi_1,\lambda_2+\pi_2,\dots,\lambda_\ell+\pi_\ell\right]+\cdots+\left[\lambda_1+\pi_1,\lambda_2+\pi_2,\dots,\lambda_\ell+\pi_\ell\right]}_{a\text{ times}}\\
       &=\left[\left(a\left(\lambda_1+\pi_1\right),a\left(\lambda_2+\pi_2\right),\dots,a\left(\lambda_\ell+\pi_\ell\right)\right)\right]\\
    &=\left[\left(a\lambda_1+a\pi_1,a\lambda_2+a\pi_2,\dots,a\lambda_\ell+a\pi_\ell\right)\right]\\
       &=\left[\left(a\lambda_1,a\lambda_2,\dots,a\lambda_\ell\right)\right]+\left[\left(a\pi_1,a\pi_2,\dots,a\pi_i\right)\right]\\
       &=\underbrace{[\lambda]+\cdots+[\lambda]}_{a\text{ times}}+\underbrace{[\pi]+\cdots+[\pi]}_{a\text{ times}}=a[\lambda]+a[\pi].
   \end{align*}
   
   Now, define the same map $\varphi:\mathcal{P}/_{\sim_k}\to\mathcal{P}_k^+$ as in the proof of Theorem \ref{group_structures}, Part 2, by $\varphi([\lambda])=\gamma:=\left(\gamma_1,\gamma_2,\dots,\gamma_\ell\right)$, where $\gamma_\ell$ and $\gamma_j$ for $1\leq j<\ell$ are defined in \eqref{gamma_ell_defn} and \eqref{gamma_j_defn}, respectively.  We have already shown that $\varphi$ is a group isomorphism, which implies that $\mathcal{P}_k^+$ is an abelian group and that $\varphi([\lambda]+[\pi])=\varphi([\lambda])+_k\varphi([\pi])$.  Now, define scalar multiplication in $\mathcal{P}_k^+$ by $a\lambda:=\underbrace{\lambda+_k\cdots+_k\lambda}_{a\text{ times}}$. It remains to prove that $\varphi(a[\lambda])=a\varphi([\lambda])$. We calculate that
   \begin{align*}
   a\varphi([\lambda])&=a\gamma\\
   &=\underbrace{\gamma+_k\cdots+_k\gamma}_{a\text{ times}}\\
   &\sim_k\left(a\left(\lambda_1-\left\lfloor\frac{\lambda_1-\gamma_2}{k}\right\rfloor k\right),\dots,a\left(\lambda_{\ell-1}-\left\lfloor\frac{\lambda_{\ell-1}-\gamma_\ell}{k}\right\rfloor k\right),a\left(\lambda_\ell-\left\lfloor\frac{\lambda_\ell}{k}\right\rfloor k\right)\right),
   \end{align*}
   which is then component-wise reduced modulo $k$, by the definition of the operation $+_k$. On the other hand, we have that
   \begin{align*}
   \varphi(a[\lambda])&=\varphi(\underbrace{[\lambda]+\cdots+[\lambda]}_{a\text{ times}})\\
   &=\varphi\left(\left[\left(a\lambda_1,a\lambda_2,\dots,a\lambda_\ell\right)\right]\right)\\
   &=\left(\xi_1,\xi_2,\dots,\xi_\ell\right)\\
   &:=\left(a\lambda_1-\left\lfloor\frac{a\lambda_1-a\xi_2}{k}\right\rfloor k,\dots,a\lambda_{\ell-1}-\left\lfloor\frac{a\lambda_{\ell-1}-a\xi_\ell}{k}\right\rfloor k,a\lambda_\ell-\left\lfloor\frac{a\lambda_\ell}{k}\right\rfloor k\right),
   \end{align*}
   where we define $\xi:=\left(\xi_1,\xi_2,\dots,\xi_\ell\right)$ to be the image of $\left[\left(a\lambda_1,a\lambda_2,\dots,a\lambda_\ell\right)\right]$ under $\varphi$. The image $\varphi(a[\lambda])$ is also component-wise reduced modulo $k$, by the definition of the map $\varphi$.  Therefore, since both $a\varphi([\lambda])$ and $\varphi(a[\lambda])$ are partitions which are component-wise reduced modulo $k$ and whose respective parts are congruent modulo $k$, we have $a\varphi([\lambda])=\varphi(a[\lambda])$. This completes the proof that $\mathcal{P}_k^+$ is a vector space.
\end{proof}

%%%%%%%%%%%%%%%%%%%%%%%%%%%%%%%%%%%%%%

\section{Ring Structure on $\mathcal{P}_k^+$}\label{section_ring}

We prove here that $\mathcal{P}_k^+$ forms a commutative ring under component-wise addition and component-wise multiplication modulo $k$, where component-wise multiplication modulo $k$ is defined below.

\begin{definition}\label{componentwise_mult_zeros-okay_defn}
For any positive integer $k$, we define the binary operation \textit{component-wise multiplication and reduction modulo $k$}, denoted $\cdot_k$, on $\mathcal{P}$ as follows. Let $\lambda=\left(\lambda_1,\lambda_2,\dots,\lambda_\ell\right)$ and $\pi=\left(\pi_1,\pi_2,\dots,\pi_i\right)$ be any two partitions in $\mathcal{P}$ with $\ell\geq i$. Define $\lambda\cdot_k\pi$ by first component-wise multiplying the parts of $\lambda$ and $\pi$: $$(\lambda_1\pi_1,\lambda_2\pi_2,\dots,\lambda_i\pi_i,\underbrace{0,\dots,0}_{\ell-i})=\left(\lambda_1\pi_1,\lambda_2\pi_2,\dots,\lambda_i\pi_i\right),$$ and then component-wise reducing the result modulo $k$ as described in Definition \ref{componentwise_reduction_defn}.
\end{definition}

Unlike component-wise multiplication and zero-reduction modulo $k$, this new operation of component-wise multiplication and reduction modulo $k$ allows nonzero partitions to have parts divisible by $k$. For this reason, the set $\mathcal{P}_k^+$ admits zero divisors with respect to $\cdot_k$, but, as we will show in the following proof, we still have that $\left(\mathcal{P}_k^+,+_k,\cdot_k\right)$ satisfies all required axioms to be a commutative ring.

\begin{proof}[Proof of Theorem \ref{ring_structure}]
By Theorem \ref{group_structures}, Part 1, we have that $\mathcal{P}_k^+$ is an abelian group under $+_k$.  It is straightforward to see from Definition \ref{componentwise_mult_zeros-okay_defn} that $\cdot_k$ is commutative. To prove that $\mathcal{P}_k^+$ is a ring, it remains to show that $\cdot_k$ is associative and that the distributive law holds in $\mathcal{P}_k^+$ with the operations $+_k$ and $\cdot_k$.

Let $\lambda = \left(\lambda_1, \dots, \lambda_\ell\right)$, $\pi=\left(\pi_1, \dots, \pi_i\right)$, and $\tau=\left(\tau_1, \dots, \tau_t\right)$ be three partitions in $\mathcal{P}$, and assume without loss of generality that $\ell\leq i\leq t$. We have that
\begin{align*}
(\lambda\cdot_k\pi)\cdot_k\tau&\sim_k\left(\lambda_1\pi_1,\lambda_2\pi_2,\dots,\lambda_i\pi_i\right)\cdot_k\left(\tau_1,\tau_2,\dots,\tau_t\right)\\
&\sim_k\left(\lambda_1\pi_1\tau_1,\lambda_2\pi_2\tau_2,\dots,\lambda_t\pi_t\tau_t\right)\\
&\sim_k\left(\lambda_1,\lambda_2,\dots,\lambda_\ell\right)\cdot_k\left(\pi_1\tau_1,\pi_2\tau_2,\dots,\pi_t\tau_t\right)\\
&\sim_k\lambda\cdot_k(\pi\cdot_k\tau).
\end{align*}
Since the operation $\cdot_k$ yields a partition which is component-wise reduced modulo $k$, this implies that $(\lambda\cdot_k\pi)\cdot_k\tau=\lambda\cdot_k(\pi\cdot_k\tau)$, and therefore $\cdot_k$ is associative.

Recall that we consider the $j$-th part of a partition $\gamma$ to be equal to zero if $j>\ell(\gamma)$. We also have that
\begin{align*}
\lambda\cdot_k(\pi+_k\tau)&\sim_k\lambda\cdot_k\left(\pi_1+\tau_1,\pi_2+\tau_2,\dots,\pi_i+\tau_i\right)\\
&\sim_k\left(\lambda_1\left(\pi_1+\tau_1\right),\lambda_2\left(\pi_2+\tau_2\right),\dots,\lambda_i\left(\pi_i+\tau_i\right)\right)\\
&\sim_k\left(\lambda_1\pi_1+\lambda_1\tau_1,\lambda_2\pi_2+\lambda_2\tau_2,\dots,\lambda_i\pi_i+\lambda_i\tau_i\right)\\
&\sim_k(\lambda\cdot_k\pi)+_k(\lambda\cdot_k\tau).
\end{align*}
Again, we see that $\lambda\cdot_k(\pi+_k\tau)=(\lambda\cdot_k\pi)+_k(\lambda\cdot_k\tau)$, since both sides are partitions which are component-wise reduced modulo $k$ and which are component-wise congruent modulo $k$ to each other. Thus, the distributive law holds in $\mathcal{P}_k^+$. This completes the proof that $\mathcal{P}_k^+$ is a commutative ring.
\end{proof}

We also determine the structure of all finitely generated ideals of $\mathcal{P}_p^+$, for any prime $p$.

\begin{theorem}\label{ideal_thm}
For any prime $p$, every finitely generated ideal of $\mathcal{P}_p^+$ is principal.
\end{theorem}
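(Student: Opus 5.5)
The plan is to transport the question to a coordinate description of the ring. By the proof of Theorem \ref{group_structures}, Part 2, the map $\varphi$ identifies $\mathcal{P}_p^+$ with $\mathcal{P}/_{\sim_p}$. A class $[\lambda]$ is determined exactly by its sequence of residues $(\lambda_1 \bmod p, \lambda_2 \bmod p, \dots)$, which has only finitely many nonzero entries. Every such finitely supported sequence in $\mathbb{Z}_p$ arises: choose representatives and add large multiples of $p$ from the bottom up to force weakly decreasing order.

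Under this identification, $+_p$ is component-wise addition in $\mathbb{Z}_p$. The associativity computation in the proof of Theorem \ref{ring_structure} shows that $\cdot_p$ is component-wise multiplication in $\mathbb{Z}_p$. Hence $(\mathcal{P}_p^+,+_p,\cdot_p)\cong\bigoplus_{j\ge1}\mathbb{Z}_p$, a direct sum of countably many copies of the field $\mathbb{Z}_p$ with coordinate-wise operations. This ring has no identity.

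For $x$ in this ring, let $\mathrm{supp}(x)$ be the finite set of indices $j$ with $x_j\neq0$. For $j\ge1$, let $e_j$ be the element with a $1$ in coordinate $j$ and $0$ elsewhere. Concretely, $e_j$ is the reduced partition $(p,\dots,p,1)$ with $j-1$ parts equal to $p$. For a finite set $S$ of indices, put $e_S:=\sum_{j\in S}e_j$.

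Now let $I$ be generated by $x^{(1)},\dots,x^{(m)}$, and let $S:=\bigcup_i\mathrm{supp}(x^{(i)})$. I claim $I=(e_S)=\{y:\mathrm{supp}(y)\subseteq S\}$, and I will prove this by two inclusions.

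For $I\subseteq\{y:\mathrm{supp}(y)\subseteq S\}$: since the ring may lack a unit, elements of $I$ are finite sums of terms $n x^{(i)}$ and $r x^{(i)}$ with $n\in\mathbb{Z}$ and $r$ in the ring. Each such term vanishes outside $S$, and the set of elements supported in $S$ is closed under sums.

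For the reverse inclusion: given $y$ with $\mathrm{supp}(y)\subseteq S$, fix $j\in S$ and pick some $i$ with $x^{(i)}_j\neq0$. Since $\mathbb{Z}_p$ is a field, $\bigl(y_j (x^{(i)}_j)^{-1}e_j\bigr)\cdot x^{(i)}=y_je_j$ lies in $I$. Summing over $j\in S$ gives $y\in I$. The identical argument with the single generator $e_S$ gives $(e_S)=\{y:\mathrm{supp}(y)\subseteq S\}$, since $e_S\cdot y=y$ for every such $y$. Thus $I=(e_S)$ is principal.

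The main obstacle is the first step: checking carefully that $\cdot_p$ on reduced partitions really corresponds to coordinate-wise multiplication of residue sequences. This needs the convention that missing parts are $0$, and that trailing zero residues are dropped by reduction, so that the product of partitions of different lengths is handled correctly. Once that identification is secure, the rest is linear algebra over $\mathbb{Z}_p$. Primality is used only to invert $x^{(i)}_j$ in $\mathbb{Z}_p$.
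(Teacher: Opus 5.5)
Your proof is correct, and it takes a different route from the paper. You first identify $(\mathcal{P}_p^+,+_p,\cdot_p)$, via residue sequences, with the coordinate-wise rng $\bigoplus_{j\ge1}\mathbb{Z}_p$. You then show that every finitely generated ideal is exactly $\{y:\mathrm{supp}(y)\subseteq S\}$ for a finite $S$, and that it is generated by the idempotent $e_S$.

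The paper never makes this identification. It takes two elements $\lambda,\tau\in I$ and builds $g$ with $g_i\equiv\gcd(\lambda_i,\tau_i)\pmod p$. It shows $\langle\lambda,\tau\rangle\subseteq\langle g\rangle$, then claims $\langle g\rangle\subseteq\langle\lambda\rangle$ by inverting the multipliers $a_i$ in $\lambda_i\equiv a_ig_i\pmod p$, and concludes $I=\langle\lambda\rangle$. That step needs every $a_i$ to be a unit modulo $p$. This fails at any coordinate where $\lambda_i\equiv0$ but $\tau_i\not\equiv0\pmod p$. For example, $\langle(1),(p,1)\rangle\neq\langle(1)\rangle$, since $\langle(1)\rangle$ contains only partitions with at most one part.

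Your choice, for each $j\in S$, of a generator $x^{(i)}$ with $x^{(i)}_j\neq0$ is exactly what handles those coordinates. So your approach gives a complete argument, an explicit description of all finitely generated ideals, and the observation that the ring has no identity.

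The paper's gcd construction can be repaired by concluding $I=\langle g\rangle$ instead, checking $g\in\langle\lambda,\tau\rangle$ coordinate by coordinate as you do. Note that $g$ has support exactly $S$. By your argument, any element with that support generates $I$, so the generator is not unique, contrary to the paper's closing remark.
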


\begin{proof}
Let $I$ be a finitely generated ideal of $\mathcal{P}_p^+$. Then we have that $\langle\pi\rangle\subseteq I$ for any $\pi\in I$. If $I\subseteq\langle\lambda\rangle$ for some $\lambda\in I$, then $I$ is principal. Let $\lambda=\left(\lambda_1,\lambda_2,\dots,\lambda_\ell\right)\in I$, and suppose by way of contradiction that there exists $\tau=\left(\tau_1,\tau_2,\dots,\tau_t\right)\in I$ such that $\tau_i\not\equiv0\pmod{\lambda_i}$ for some $i\geq1$. Then $I$ contains all partitions of the form $$(r\cdot_p\lambda)+_p(s\cdot_p\tau)=(r_1\lambda_1+s_1\tau_1,r_2\lambda_2+s_2\tau_2,\dots,r_y\lambda_y+s_y\tau_y),$$ where $r=(r_1,r_2,\dots,r_c),s=(s_1,s_2,\dots,s_h)\in\mathcal{P}_p^+$ and $y=\max\{\ell,t,c,h\}$. Define the partition $g:=(g_1,g_2,\dots,g_y)\in\mathcal{P}_p^+$ with the property that $g_i \equiv \gcd(\lambda_i,\tau_i)\pmod{p}$ for all $1 \leq i \leq y.$ Then for each $i\geq1$, there exist positive integers $a_i,b_i$ such that
\begin{align}\label{g_congruence_classes}
\lambda_i\equiv a_ig_i\pmod{p}\quad\text{and}\quad\tau_i\equiv b_ig_i\pmod{p}.
\end{align}
Therefore, we have that
\begin{align*}
(r\cdot_p\lambda)+_p(s\cdot_p\tau)&\sim_p\left(r_1(a_1g_1)+s_1(b_1g_1),r_2(a_2g_2)+s_2(b_2g_2),\dots,r_y(a_yg_y)+s_y(b_yg_y)\right)\\
&\sim_p\left(g_1(r_1a_1+s_1b_1),g_2(r_2a_2+s_2b_2),\dots,g_y(r_ya_y+s_yb_y)\right)\\
&\sim_pg\cdot_p\nu,
\end{align*}
where $\nu:=\left(\nu_1,\nu_2,\dots,\nu_y\right)\in\mathcal{P}_p^+$ is defined so that $\nu_i\equiv r_ia_i+s_ib_i\pmod{p}$ for all $1\leq i\leq y$. Since the operations $+_p$ and $\cdot_p$ both yield component-wise reduced modulo $p$ partitions, we have that $(r\cdot_p\lambda)+_p(s\cdot_p\tau)=g\cdot_p\nu$. It follows that $\langle\lambda,\tau\rangle\subseteq\langle g\cdot_p\nu\rangle\subseteq\langle g\rangle$, since $g\cdot_p\nu\in\langle g\rangle$. Let $\mu\cdot_p g\sim_p\left(\mu_1g_1,\mu_2g_2,\dots,\mu_mg_m\right)$ be an arbitrary partition in $\langle g\rangle$. Then we can use \eqref{g_congruence_classes} to rewrite $$\mu\cdot_p g=\left(\mu_1\left(a_1^{-1}\lambda_1\right),\mu_2\left(a_2^{-1}\lambda_2\right),\dots,\mu_m\left(a_m^{-1}\lambda_m\right)\right),$$ where $a_i^{-1},b_i^{-1}$ are the multiplicative inverses of $a_i,b_i$ in $\mathbb{Z}_p$, for each $1\leq i\leq m$. Now, define $\gamma:=\left(\gamma_1,\gamma_2,\dots,\gamma_m\right)\in\mathcal{P}_p^+$ by $\gamma_i\equiv\mu_ia_i^{-1}\pmod{p}$ for each $1\leq i\leq m$. Then we have that $\mu\cdot_p g=\gamma\cdot_p\lambda$, and therefore $\mu\cdot_p g\in\langle\lambda\rangle$. We have shown that $\langle \lambda,\tau\rangle\subseteq\langle g\rangle\subseteq\langle\lambda\rangle$, which contradicts the assumption that there exists $\tau\in I$ such that $\tau\not\in\langle\lambda\rangle$.  Thus, $I=\langle\lambda\rangle$ is a principal ideal. This process can be repeated for any finite number of generating partitions of $I$, so every finitely generated ideal of $\mathcal{P}_p^+$ is principal.
\end{proof}

Theorem \ref{ideal_thm} shows that for a prime $p$, any finitely generated ideal of partitions with smallest part less than $p$ and difference between any two consecutive parts less than $p$ consists entirely of component-wise products of a single generating partition. In every such ideal, the unique generating partition is the partition whose parts are congruent modulo $p$ to the respective components of the $\ell$-tuple of greatest common divisors of the generating partitions, where $\ell$ is the maximum length of the generating partitions.

\begin{example}
Let $p=3$, $\lambda=(3,2)$, $\pi=(5,4,3,1)$, and $\tau=(9,8,6,4,2)$, and consider the ideal $I=\langle\lambda,\pi,\tau\rangle\subset\mathcal{P}_3^+$. The 5-tuple of component-wise greatest common divisors of $\lambda,\pi,\tau$ is $g=(\gcd(3,5,9),\gcd(2,4,8),\gcd(0,3,6),\gcd(0,1,4),\gcd(0,0,2))=(1,2,3,1,2)$, so the unique generating partition of $I$ is $\gamma=(10,8,6,4,2)$. To illustrate that $I=\langle\gamma\rangle$, we calculate that
\begin{align*}
(3,1)\cdot_p(10,8,6,4,2)&=(3,2);\\
(2,2,1,1)\cdot_p(10,8,6,4,2)&=(5,4,3,1);\\
(3,1,1,1,1)\cdot_p(10,8,6,4,2)&=(9,8,6,4,2).
\end{align*}
\end{example}

%%%%%%%%%%%%%%%%%%%%%%%%%%%%%%%%%%%%%%

\backmatter

\section*{Statements and Declarations}

\begin{itemize}
\item Funding: This work was supported by NSF Grant DMS-2149921.
\item Competing interests: The authors have no competing interests to declare that are relevant to the content of this article.
\item Data Availability: No datasets were generated or analyzed during this study.
\end{itemize}

%%%%%%%%%%%%%%%%%%%%%%%%%%%%%%%%%%%%%%

\end{document}